\documentclass[11pt]{article}
\usepackage{xypic}
\usepackage{mathrsfs}
\usepackage{graphicx}
\usepackage{enumerate}
\usepackage{amsfonts,amsmath,amsthm, amssymb}
\usepackage{latexsym, euscript, epic, eepic, color}
\usepackage{hyperref}
\numberwithin{equation}{section}
\newtheorem{lemma}{Lemma}[section]
\newtheorem{theorem}{Theorem}[section]
\newtheorem{definition}{Definition}[section]
\newtheorem{proposition}{Proposition}[section]
\newtheorem{corollary}{Corollary}

\newtheorem{remark}{Remark}
\newtheorem{example}{Example}[section]
\newcommand{\ba}{\mathbf{a}}
\newcommand{\bb}{\mathbf{b}}

\newcommand{\bu}{\mathbf{u}}
\newcommand{\bv}{\mathbf{v}}

\begin{document}

\title{Open dynamical systems in topologically expansive decreasing Lorenz maps
\footnotetext {$\dag$ Corresponding author:  sunyun@whut.edu.cn}
\footnotetext {2020 AMS Subject Classifications: 37E05, 37B10;}}

\author{Yiming Ding$^1$, Wolfgang Steiner$^2$, Yun Sun$^{3,\dag}$, \\
\small \it $^1$School of Mathematics and Systems Science,\\
\small \it Wuhan University of Science and Technology, Wuhan 430081, China\\
\small \it $^2$IRIF, Universit\'e Paris Cit\'e, CNRS, F--75006 Paris, France \\
\small \it $^{3,\dag} $Hubei Key Laboratory of Mathematics $\&$ Intelligent Computation\\
\small \it and Interdisciplinary Sciences;\\
\small \it School of Mathematics and Statistics, Wuhan University of Technology,\\
\small \it Wuhan 430070, P.R. China}
\date{}
\maketitle
\begin{center}
\begin{minipage}{120mm}
{\small {\bf Abstract.}
We develop a parity-sensitive admissibility theory for topologically
expansive decreasing Lorenz maps. By embedding them into the standard
circle map $m_{-2}$, we realize every decreasing-admissible pair.
We then study the negative doubling map $T_{-2}$ with a hole $(a,b)$,
where $0<a\leq1/2\leq b<1$. Using the alternating lexicographic order,
we classify plateaux of the symbolic survivor spaces. Under weak
admissibility and strict shifted ordering, the boundary subshifts are
realized as kneading spaces of decreasing Lorenz maps. We also establish
entropy correspondences across countable itinerary discrepancies in
completed survivor systems. For fixed $a$, we prove that
$b\mapsto\dim_{\mathrm H}S_{-2}(a,b)$ is a devil's staircase, possibly constant.
Under the pullback-null condition~(P), we obtain the corresponding
survivor-entropy result for expansive decreasing Lorenz maps. We verify
this condition for the piecewise-linear family in
Example~\ref{ex:edlm-linear-pullback}.
}

\end{minipage}
\end{center}
\vskip0.5cm {\small{\bf Key words and phrases} \ kneading invariants; survivor set; Hausdorff dimension; topological entropy; devil's staircase.}\vskip0.5cm

\section{Introduction}
\subsection{Lorenz maps}
Let $0< c <1$. (Increasing) Lorenz maps are interval maps $f:[0,1] \to [0,1]$ such that
\begin{enumerate}[\upshape (i)]
\item
$f$ is strictly increasing on $[0,c)$ and $(c,1]$,
\item \label{i:i2}
 $f(c-)=1$ and $f(c+)=0$.
\end{enumerate}
If additionally
\begin{enumerate}[\upshape (i)]
\setcounter{enumi}{2}
\item
the set of preimages $C(f) = \bigcup_{n\ge0} f^{-n}(c)$ of $c$ is dense in $[0,1]$,
\end{enumerate}
then $f$ is called a \emph{topologically expansive (increasing) Lorenz map}.

Lorenz maps are one-dimensional maps with a single discontinuity.  They
arise as Poincar\'e return maps for flows on branched manifolds that model
strange attractors of Lorenz systems and play an important role in the study
of vector fields near homoclinic bifurcations; see
\cite{Morales2005Expanding,morales2006contracting,robinson2000nonsymmetric}.

\par For an increasing Lorenz map, label the branches by $0$ and $1$.
For $x\notin C(f)$, its itinerary $k_f(x)=\epsilon_0\epsilon_1\cdots$
records these labels along the orbit. At precritical points use the
one-sided limits $k_f(x-)$ and $k_f(x+)$, taken through points outside
$C(f)$. The kneading invariants are $k_-=k_f(c-)$ and $k_+=k_f(c+)$,
and the kneading space is $\Omega(f)=\{k_f(x\pm):x\in[0,1]\}$.
Write $\prec$ for the usual lexicographic order and $\sigma$ for the
left shift. Hubbard and Sparrow~\cite[Theorem~1]{Hubbard1990} proved
that the kneading invariants of an expansive Lorenz map satisfy
\begin{equation} \label{expanding}
\sigma (k_{+})\preceq \sigma^n(k_{+})\prec\sigma (k_{-}) \ \ {\rm and} \ \ \sigma (k_{+})\prec \sigma^n(k_{-})\preceq\sigma (k_{-}) \quad \mbox{for all}\ n \ge 0;
\end{equation}
conversely, given any two sequences $k_{-}$ and $k_{+}$ satisfying \eqref{expanding}, there exists an expansive Lorenz map $f$ with $(k_{-},k_{+})$ as its kneading invariant, and $f$ is unique up to conjugacy by a homeomorphism of $[0,1]$.
Consequently, we call the condition \eqref{expanding} the \emph{HS admissible condition}.
We also recall that, by \cite[Theorem~2]{Hubbard1990}, the kneading space $\Omega(f)$ of an expansive Lorenz map~$f$ is completely determined by the kneading invariants $(k_-,k_+)$ of~$f$, i.e.,
\[
\Omega(f)= \{\omega \in \{0,1\}^{\mathbb{N}} \,:\, \sigma(k_{+})\preceq  \sigma^{n}(\omega) \preceq \sigma(k_{-}),  \forall \ n \in \mathbb{N}_{0}\}.
\]

The HS admissibility condition has been used in many results on
topologically expansive Lorenz maps, including kneading theory
\cite{Glendinning1996,Glendinning1993}, renormalization
\cite{cholewa2024,Ding2011}, open dynamical systems
\cite{sun2025,sun2024}, matching
\cite{Bruin2017,sun2023}, and linearization
\cite{Cui2015,dingsun2026,Glendinning1990}.

The goal of this paper is to extend some of these results to decreasing
Lorenz maps. We use a decreasing version of the \emph{extended Lorenz
map} definition: we require $f(0)=1$ and $f(1)=0$, while the two
critical values may vary. This differs from fixing the extremal
values at the discontinuity, as in the usual increasing definition above
and its decreasing version $f(c-)=0$, $f(c+)=1$.
The endpoint conditions $f(0)=1$ and $f(1)=0$ agree with the class C
normalization of Glendinning and Hege~\cite[Section 2]{GlendinningHege2026}.

\begin{definition}\label{defofdecreasing}
{A~\emph{decreasing Lorenz map} is an interval map $f:I\to I$, where $I:=[0,1]$, with a critical point $c\in (0,1)$, such that}
\begin{enumerate}[\upshape (i)]
\item
$f$ is continuous and strictly decreasing on $[0,c)$ and on $(c,1]$;
\item \label{i:d2}
$f(0)=1$, $f(1)=0$, and the one-sided limits satisfy
$f(c+)>f(c-)$.
\end{enumerate}
If additionally
\begin{enumerate}[\upshape (i)]
\setcounter{enumi}{2}
\item
{the preimage set $C(f) = \bigcup_{n\ge0} f^{-n}(c)$ of $c$, with preimages taken up to the first visit to $c$, is dense in $[0,1]$,}
\end{enumerate}
then $f$ is called a \emph{topologically expansive decreasing Lorenz map}.
\end{definition}
{
Throughout, \(f\) is a map from all of \(I\) to \(I\). Its value
\(f(c)\in I\) may be chosen arbitrarily; a formula specifying only the
two branches is understood to be supplemented by such a value at \(c\).
The two formal sides \(c-\) and \(c+\) specify the one-sided limits
\(f(c-)\) and \(f(c+)\), which determine the kneading sequences
independently of the choice of \(f(c)\).
The notation \(f^{-0}(c)=\{c\}\), while for \(n\geq1\), \(f^{-n}(c)\)
uses branch iterates that do not hit \(c\) before the \(n\)-th step.
For survivor sets with \(c\) as a hole endpoint, iteration at \(c\)
uses the corresponding branch-limit value selected by the left or right
completion convention introduced after \eqref{suanfa2}.
}
We write $\mathrm{DLM}$ for the class of decreasing Lorenz maps and
$\mathrm{EDLM}$ for the topologically expansive subclass. The upward jump
in condition~(\ref{i:d2}) is forced by expansivity.
We now explain how a map with different endpoint values can be included
in this setting.
A map $g:[0,1]\to[0,1]$ with two continuous strictly decreasing branches,
an upward jump, and $g(0)<1$, $g(1)>0$ can be extended affinely on both
sides to a larger interval, so that the left endpoint is mapped to the
right endpoint and the right endpoint is mapped to the left endpoint.
Rescaling this interval to $[0,1]$ gives $f(0)=1$, $f(1)=0$, while
preserving the original dynamics, up to affine conjugacy, on an invariant
subinterval.
For the special linear class of decreasing Lorenz-type maps,
negative $\beta$-transformations provide important examples and have been studied in
\cite{frougny2009,huang2026,ito2009,wolfgang2011,wolfgang2013}.

 Let $f\in\mathrm{EDLM}$.  As in the increasing case, trajectories can be coded
by sequences in $\{1,2\}^{\mathbb N}$.  For
$\omega,\nu\in\{1,2\}^{\mathbb N}$, the alternating lexicographic order
$\prec$ is defined by $\omega\prec\nu$
if there exists an integer $k \ge 1$ such that $\omega_{j}=\nu_j$ for all $1 \le j < k$ and
\begin{equation}\label{eq:alter-order}
\begin{cases}
\omega_k < \nu_k & \text{if } k \text{ is odd}, \\
\omega_k > \nu_k & \text{if } k \text{ is even}.
\end{cases}\end{equation}
 This is the alternating order used for maps with two decreasing
branches; see Glendinning~\cite[Section~3]{Glendinning2014}.
{\bf Throughout the remainder of this paper, all comparisons of symbolic
sequences and words use the alternating lexicographic order.}
 The kneading sequence of a point $x$ is the sequence
$\tau_f(x)=\epsilon_0\epsilon_1\ldots\in\{1,2\}^{\mathbb N}$ defined by
\begin{equation}\label{suanfa}
    \epsilon_i=1\ \ \ \ \ \ {\rm if}\ \ \ f^i(x)<c \ \ \ \ {\rm and} \ \ \
\epsilon_i=2\ \ \ \ \ \ {\rm if}\ \ \ f^i(x)>c.
\end{equation}
This definition works for $x \notin C(f)=\bigcup_{n \ge 0}f^{-n}(c)$. If $x$ is
a preimage of $c$, $x$ has upper and lower kneading sequences
\begin{equation}\label{suanfa2}\tau_f(x+)=\lim_{y\downarrow x}\tau_f(y), \qquad
\tau_f(x-)=\lim_{y\uparrow x}\tau_f(y),\end{equation}
where $y$ ranges over points
of $[0,1]$ that are not preimages of $c$.  In particular,
$(k_-, k_+)=(\tau_f(c-), \tau_f(c+))$ are the \emph{ kneading invariants} of $f$.
The symbols $\tau_f(x-)$ and $\tau_f(x+)$ always denote these
one-sided limits. At points outside $C(f)$, they agree with the unique
itinerary $\tau_f(x)$; at $0$ and $1$, only the limit from inside $I$
is used. For a nonempty hole with $a=c<b$, the \emph{left
completion} means that survivor orbits use the value $f(c-)$ at every
visit to $c$, which is labelled by $1$. For $a<c=b$, the \emph{right
completion} uses $f(c+)$ at every visit to $c$, labelled by $2$.
Away from $c$, both completions agree with $f$.
The itinerary of a completed orbit is obtained by iterating this chosen
map; if the critical orbit returns to $c$, it need not equal the limiting
sequence $\tau_f(c-)$ or $\tau_f(c+)$, respectively. We continue to use
the one-sided limits as the boundary sequences of the hole. The possible
differences at critical and precritical itineraries are accounted for in
Proposition~\ref{proptwoholes}. When $a=b=c$, the hole is empty, the survivor
set is $I$, and the full kneading space with both one-sided sequences is
retained.

The kneading space of $f$, also called the decreasing Lorenz shift, is
$\Omega(f)=\{\tau_f(x\pm): x \in [0,1]\}.$ These itinerary and
kneading-space definitions are also used for general maps in
$\mathrm{DLM}$. Define
\begin{equation*}
\left \{ \begin{array}{ll}
\Omega(k_-,k_+):=\{\omega\in\{1,2\}^\mathbb{N}:\sigma^n(\omega)\succeq k_+ \ {\rm or} \ \sigma^n(\omega)\preceq k_- \ \forall n\in \mathbb{N}_{0} \},\\
\Omega[k_-,k_+):=\{\omega\in\{1,2\}^\mathbb{N}:\sigma^n(\omega)\succeq k_+ \ {\rm or} \ \sigma^n(\omega)\prec k_- \ \forall n\in \mathbb{N}_{0} \},\\
\Omega(k_-,k_+]:=\{\omega\in\{1,2\}^\mathbb{N}:\sigma^n(\omega)\succ k_+ \ {\rm or} \ \sigma^n(\omega)\preceq k_- \ \forall n\in \mathbb{N}_{0} \}.
\end{array}
\right.
\end{equation*}

\begin{theorem}\label{thm1}
If $f\in\mathrm{EDLM}$, then its kneading invariants satisfy
\[
k_-\in1\{1,2\}^{\mathbb N},\qquad
k_+\in2\{1,2\}^{\mathbb N},\qquad
\sigma(k_-)\prec\sigma(k_+),
\]
and, for every $j\geq0$,
\begin{equation}\label{eq:parity-admissibility}
\left\{
\begin{aligned}
\sigma^{2j}(k_-)&\notin(k_-,k_+],\qquad
\sigma^{2j}(k_+)\notin[k_-,k_+),\\
\sigma^{2j+1}(k_-)&\notin[k_-,k_+),\qquad
\sigma^{2j+1}(k_+)\notin(k_-,k_+]
\end{aligned}
\right.
\end{equation}
where all inequalities are taken in the alternating order.
Conversely, every pair satisfying the displayed conditions is the kneading
pair of a map $f\in\mathrm{EDLM}$. {Moreover, if \(f,g\in\mathrm{EDLM}\) have the same kneading
pair and critical points \(c_f,c_g\), then there is an increasing
homeomorphism \(h:[0,1]\to[0,1]\) with \(h(c_f)=c_g\) such that
\[
\begin{aligned}
h(f(x))&=g(h(x))\quad\text{for }x\in I,\ x\ne c_f,\\
h\bigl(f(c_f\pm)\bigr)&=g(c_g\pm).
\end{aligned}
\]
Thus the kneading pair determines the two monotone branches and their
one-sided limits up to an increasing change of coordinates. This
uniqueness statement makes no assertion about the independently chosen
values \(f(c_f)\) and \(g(c_g)\).}
\end{theorem}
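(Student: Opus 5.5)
The proof splits into three parts: necessity, realization, and uniqueness. The alternating order is the order reflecting monotonicity of $\tau_f$: since both branches decrease, $x<y$ with distinct itineraries gives $\tau_f(x)\prec\tau_f(y)$.

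\emph{Necessity.} First, $x\mapsto\tau_f(x\pm)$ is nondecreasing in the alternating order, and strictly increasing on points outside $C(f)$. Strictness uses density of $C(f)$: between $x<y$ there is a precritical point, so the orbits are separated by $c$ at some first time $k$. At that time the composition $f^{k}$ restricted to $[x,y]$ is increasing if $k$ is even and decreasing if $k$ is odd. This is exactly the parity rule in \eqref{eq:alter-order}, once the index shift between $\epsilon_0$ and position $1$ is accounted for.

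Next, $k_-$ starts with $1$ and $k_+$ with $2$ by definition. Since the branches are decreasing, $f(c-)$ is the infimum of $f$ on $[0,c)$ and $f(c+)$ the supremum of $f$ on $(c,1]$. Hence $f(c-)<f(c+)$ gives $\sigma(k_-)\prec\sigma(k_+)$, again using strict monotonicity of itineraries together with expansivity.

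For the parity conditions, the orbit points $f^{n}(c\pm)$ never lie strictly in the gap forbidden by the one-sided structure. The key observation is that the one-sided itinerary $\sigma^n(k_\pm)$ is the itinerary of $f^n(c\pm)$ approached from the side determined by the parity of $n$, because $f^n$ near $c\pm$ reverses orientation $n$ times. Thus $\sigma^{2j}(k_-)$ is a left limit $\tau(y-)$ with $y=f^{2j}(c-)$, while $\sigma^{2j+1}(k_-)$ is a right limit, and symmetrically for $k_+$. A left-limit itinerary can equal $k_-$ but not $k_+$; it cannot lie strictly between $k_-$ and $k_+$, since no point lies strictly between $c-$ and $c+$. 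This produces exactly the four half-open exclusions in \eqref{eq:parity-admissibility}.

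\emph{Converse (realization).} I would embed into the circle map $m_{-2}:t\mapsto -2t \bmod 1$, as the abstract suggests. Given an admissible pair, define the closed set $\Omega$ of sequences avoiding the gap with the appropriate parity-sensitive conventions, in the spirit of $\Omega(k_-,k_+)$. Map $\Omega$ into $[0,1]$ via the $-2$ expansion $\pi(\omega)=\sum_i(\omega_i-1)(-1/2)^{i}$ (suitably normalized), which is order preserving from the alternating order. The map $\pi$ is continuous and monotone, with $\sigma$ corresponding to $m_{-2}$. Collapse the gaps of $\pi(\Omega)$ and interpolate linearly over them; after rescaling this yields an interval map $f$ with two decreasing branches, semiconjugate to $\sigma$ on $\Omega$. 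I then check the following:
\begin{itemize}
\item[(a)] $f(0)=1$ and $f(1)=0$, using the constant sequences $\overline{1}$ and $\overline{2}$, or their endpoint analogues;
\item[(b)] the one-sided limits at $c$ have itineraries $k_\pm$, which is exactly where the parity conditions ensure that $k_\pm$ themselves lie in $\Omega$ with the correct one-sided label;
\item[(c)] $C(f)$ is dense, because $\Omega$ has no nontrivial interval on which all itineraries agree, as distinct sequences give distinct points.
\end{itemize}
\textbf{I expect (b) to be the main obstacle.} The admissibility conditions must be translated into the statement that the orbit of each one-sided limit is consistent with the side on which it approaches, including when the critical orbit returns to $c$.

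\emph{Uniqueness.} Given $f,g$ with the same pair, the necessity part shows that $\Omega(f)=\Omega(g)$, since both equal the set determined by $(k_-,k_+)$. Define $h$ on $C(f)$ by matching precritical points with equal one-sided itineraries. This is increasing by strict monotonicity, and it has dense image by expansivity of $g$, so it extends to an increasing homeomorphism. The conjugacy relations hold on a dense set and extend by continuity of the branches, with the one-sided limits at $c$ matching by construction.
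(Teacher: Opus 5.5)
\paragraph{Verdict.} Your necessity argument is correct and is essentially the paper's. The converse has a genuine gap at the step you call (b), and you leave that step open.

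\paragraph{Necessity.} You obtain strict monotonicity of one-sided itineraries from density of $C(f)$, and you use the side bookkeeping $\tau_f(f(x)\pm)=\sigma(\tau_f(x\mp))$. You then observe that a left-limit itinerary avoids $(k_-,k_+]$ and a right-limit itinerary avoids $[k_-,k_+)$. This matches the paper's argument.

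\paragraph{The gap in the converse.} Collapsing the gaps of $\pi_{-2}(\Omega(k_-,k_+))$, that is, the components of $\bigcup_{n\ge0}m_{-2}^{-n}((A,B))$ with $A=[k_-]$ and $B=[k_+]$, returns the prescribed kneading pair only if no two of these components have closures sharing an endpoint. Otherwise the central arc $[A,B]$ merges with an abutting gap, and the one-sided itineraries at $c$ become those of the outer endpoints.

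You say the parity conditions put $k_\pm$ into $\Omega$ ``with the correct one-sided label''. That misplaces their role: $k_\pm\in\Omega(k_-,k_+)$ is just weak admissibility, and it does not suffice. For example, $(1^\infty,r_0)$ is weak-admissible and satisfies $\sigma(k_-)\prec\sigma(k_+)$, but it violates the odd condition $\sigma(k_-)\notin[k_-,k_+)$. Here $A=1/3$ is a fixed point at which $m_{-2}$ reverses orientation. The gap $(1/4,1/3)$ abuts $(1/3,1/2)$, and your collapse produces the lower kneading sequence $1r_0$ instead of $1^\infty$.

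\paragraph{The missing lemma.} What you need is Proposition~\ref{prop:nested}. Suppose closures of components of depths $a<b$ meet at $x$. Then $m_{-2}^{b-a}$ carries $m_{-2}^{a}(x)\in\{A,B\}$ to $m_{-2}^{b}(x)\in\{A,B\}$. It exchanges $A$ and $B$ when $b-a$ is even, and it fixes one of them when $b-a$ is odd. Injectivity of $\pi_{-2}$ on $\Omega(k_-,k_+)$ for $A<B$ (Lemma~\ref{lem:address-injectivity}) converts these circle equalities into exactly the four forbidden symbolic returns.

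Even granting this lemma, you must still check three things: the quotient map is well defined, it is strictly decreasing on each branch, and its critical values separate (Proposition~\ref{pro3.3}, Lemma~\ref{lem:automatic-jump}). Also, ``collapse the gaps and interpolate linearly over them'' is not coherent as written; only the monotone quotient makes sense.

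\paragraph{Uniqueness.} Your route is to match precritical points of $f$ and $g$ with equal one-sided itineraries, then extend the resulting order isomorphism between dense sets. This is a legitimate and more elementary alternative to the paper's approach, which blows $f$ up to an expansive degree $-2$ circle covering conjugate to $m_{-2}$ (Lemma~\ref{lem:blow-up-completion}, Proposition~\ref{po1}).

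However, the existence of the matching point for $g$ does not follow from necessity, which gives only $\Omega(g)\subseteq\Omega(k_-,k_+)$. You need the reverse inclusion, for instance by the supremum argument in the proof of Theorem~\ref{thm:dlm-hole}. After that, strict monotonicity applied to both maps forces a point of $g$ with one-sided itinerary $wk_\pm$ to first reach $c_g$ at time $|w|$.

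\paragraph{A minor correction.} The sequences coding the endpoints $0$ and $1$ are $(12)^\infty$ and $(21)^\infty$, not $1^\infty$ and $2^\infty$. The latter code the interior fixed points $1/3$ and $2/3$.
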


\begin{theorem}\label{thm2}
Let $f\in\mathrm{EDLM}$ with kneading invariants $(k_-,k_+)$. Then
\[
\Omega(f)=\{\omega\in\{1,2\}^{\mathbb N}: \sigma^n(\omega)\notin (k_-,k_+)
\ \text{for all } n\ge 0\}.
\]
Conversely, given any $\omega\in \Omega(f)$, it is
the upper or lower kneading sequence of some point $x\in[0,1]$.
Two distinct sequences in $\Omega(f)$ correspond to the same point exactly
when they are of the form $w\,k_-$ and $w\,k_+$ for some finite word $w$.
These sequences are precisely the upper and lower kneading sequences of a
point whose orbit eventually reaches~$c$.
\end{theorem}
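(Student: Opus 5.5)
The plan has three parts: monotonicity of the itinerary map, the inclusion $\Omega(f)\subseteq$ RHS, and the reverse inclusion, with the identification statements falling out of injectivity.

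\emph{Monotonicity.} The map $x\mapsto\tau_f(x)$ is nondecreasing in the alternating order; this follows by induction on the first index where two itineraries differ. Suppose $x<y$, both outside $C(f)$, and their itineraries first differ at index $k$. Then $f^{j}(x)$ and $f^{j}(y)$ lie in the same branch for all $j<k$. Each decreasing branch reverses order, so $f^{k}(x)<f^{k}(y)$ when $k$ is even and $f^k(x)>f^k(y)$ when $k$ is odd. This parity flip is exactly what the alternating order \eqref{eq:alter-order} records (with positions indexed from $1$, matching $\epsilon_0$ to position $1$). Expansivity, i.e. density of $C(f)$, upgrades this to strict monotonicity. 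If $x<y$ had the same itinerary, the whole orbit of $[x,y]$ would avoid $c$ with monotone iterates, so $(x,y)\cap C(f)=\emptyset$, contradicting density. Passing to one-sided limits gives $\tau_f(x-)\preceq\tau_f(x+)$, with $\tau_f(x-)\ne\tau_f(x+)$ exactly when $x\in C(f)$.

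\emph{Forward inclusion.} For $\Omega(f)\subseteq\{\omega:\sigma^n(\omega)\notin(k_-,k_+)\}$, note that $\sigma^n(\tau_f(x\pm))$ is the one-sided itinerary of $f^n(x)$ at the appropriate side. The side flips with the parity of $n$, since each branch is decreasing. Points $z<c$ have $\tau_f(z\pm)\preceq k_-$ and points $z>c$ have $\tau_f(z\pm)\succeq k_+$, by monotonicity and continuity of one-sided limits. For $z=c$ the one-sided sequences are $k_\mp$ themselves. In every case no shift lands strictly inside $(k_-,k_+)$.

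\emph{Reverse inclusion.} Take $\omega$ with all shifts outside $(k_-,k_+)$. For each $n$ let $I_n$ be the set of $x$ whose first $n$ symbols, computed with one-sided conventions, agree with $\omega_0\dots\omega_{n-1}$. Each $I_n$ is an interval, and I would show by induction that it is nonempty. The key step is the cylinder-image description: the image of the closure of $I_n$ under $f^{n}$ is an interval whose endpoints have itineraries that are shifts of $k_\pm$ (or $0,1$, with $\tau(0)=\overline{21}$-type sequences). The condition $\sigma^{n}\omega\notin(k_-,k_+)$ is then what guarantees that $f^n(\overline{I_n})$ meets the correct side of $c$. For this I will use $f(0)=1$, $f(1)=0$, which make the full branches surjective onto $[f(c+),1]$ and $[0,f(c-)]$ respectively. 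The nested closed intervals have a nonempty intersection. By strict monotonicity it is either a single point $x$, or an interval forbidden by density. Then $\omega$ equals $\tau_f(x-)$ or $\tau_f(x+)$, the side being chosen by whether $\omega$ is a limit from above or below. I expect this step to be the main obstacle: handling boundary cases where some $\sigma^n\omega$ equals $k_-$ or $k_+$ exactly, and tracking which one-sided limit is realized as the parity flips. I would first try to treat it by compactness, approximating $\omega$ by itineraries of non-precritical points.

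\emph{Identification.} By strict monotonicity, two distinct sequences in $\Omega(f)$ give the same point only if that point is in $C(f)$. If $f^n(x)=c$ first at step $n$, the one-sided itineraries share the prefix $w$ of length $n$ and continue with the two one-sided itineraries of $c$. Those are $k_-$ and $k_+$, in some order depending on the parity of $n$. Conversely, every such pair arises at a precritical point.
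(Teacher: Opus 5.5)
\textbf{Verdict: genuine gap.} Your monotonicity step, the forward inclusion and the fibre identification are sound. One caveat on the forward inclusion: if the orbit of $x$ reaches $c$ at time $r<n$, then $\sigma^n(\tau_f(x\pm))$ is a one-sided itinerary of a point on the orbit of $f(c\pm)$, not of $f^n(x)$; the branch-limit identities cover this. The gap is in the reverse inclusion, which is the only substantial part, and there you name the wrong mechanism.

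The hypothesis $\sigma^n(\omega)\notin(k_-,k_+)$ says nothing about where $f^n(\overline{I_n})$ lies, so it cannot be what makes the cylinder of $\omega_0\cdots\omega_n$ nonempty. What does the work is comparing $\omega$ with the itineraries of the endpoints of $I_n$:
\begin{itemize}
\item An endpoint equal to $0$ or $1$ is harmless.
\item Otherwise the endpoint is a point $e$ first reaching $c$ at a cut time $j<n$. Its one-sided itinerary from inside $I_n$ is $\omega_0\cdots\omega_{j-1}k_-$ if $\omega_j=1$, and $\omega_0\cdots\omega_{j-1}k_+$ if $\omega_j=2$.
\item If $f^n(I_n)$ missed the side $\omega_n$ of $c$, this sequence and $\omega$ would first differ at index $n$.
\item Since $f^{n-j}$ sends the endpoint $c$ of $f^j(I_n)$ to the offending endpoint of $f^n(I_n)$, the parity of $n-j$ is forced. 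Deleting the common prefix of length $j$ then puts $\sigma^j(\omega)$ strictly inside $(k_-,k_+)$.
\end{itemize}
So the hypothesis is used at the earlier cut times, together with a parity argument you have not supplied.

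Two further problems in the same step. First, your branch images are reversed. The left branch maps $[0,c)$ onto $(f(c-),1]$ and the right branch maps $(c,1]$ onto $[0,f(c+))$; these overlap because of the upward jump. With your intervals, continuations after an initial $1$ would be bounded by $\sigma(k_+)$ rather than by $\sigma(k_-)$. Second, the fallback of approximating $\omega$ by itineraries of non-precritical points is circular. The existence of such approximants is exactly the nonemptiness of the cylinders.

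\textbf{A simpler repair.} This avoids the induction and uses only your monotonicity step. The extremal sequences $(12)^\infty,(21)^\infty$ are the itineraries of $0,1$. For $(12)^\infty\prec\omega\prec(21)^\infty$ in the right-hand side, put $x:=\sup\{y\notin C(f):\tau_f(y)\prec\omega\}$. Then $0<x<1$ and $\tau_f(x-)\preceq\omega\preceq\tau_f(x+)$.
\begin{itemize}
\item If $x\notin C(f)$, the bounds coincide and $\omega=\tau_f(x)$.
\item If $x$ first reaches $c$ at time $r$, the bounds are $wk_-$ and $wk_+$ (in some order) with $|w|=r$. An $\omega$ strictly between them would have prefix $w$ with $\sigma^r(\omega)\in(k_-,k_+)$.
\end{itemize}
This is exactly the argument the paper gives for the converse in Theorem~\ref{thm:dlm-hole}.

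\textbf{Comparison with the paper's route.} For Theorem~\ref{thm2} itself the paper argues differently. It uses the uniqueness in Theorem~\ref{thm1} to replace $f$ by the canonical realization $f_{(k_-,k_+)}$, obtained by collapsing the preimage arcs of $J_0=(A,B)$ under $m_{-2}$. It then reads $\Omega(f)$ off the Cantor set of circle points whose orbits avoid $J_0$, via the monotone address map. The identifications $wk_-\sim wk_+$ are the endpoints of maximal removed arcs. Once repaired, your intrinsic route needs only density of $C(f)$ and is independent of the circle construction. The paper's route leans on Theorem~\ref{thm1}, but gets the fibre structure directly from the geometry of the removed arcs.
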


\begin{corollary}\label{co1}
For each chosen corresponding fixed point and the normalization sending it
to \(0\), the orientation-preserving conjugacy in the circle realization
is unique.
\end{corollary}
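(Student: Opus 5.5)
The plan is to work in the circle realization behind Theorem~\ref{thm1}: the map $f\in\mathrm{EDLM}$ is glued at $0\sim 1$ and embedded into, or semiconjugated onto, the standard circle map $m_{-2}(x)=-2x \bmod 1$. Any orientation-preserving conjugacy in this realization must send fixed points to fixed points. The map $m_{-2}$ has the fixed points $0$, $1/3$ and $2/3$. Choosing a "corresponding fixed point" and normalizing it to $0$ therefore removes the discrete ambiguity coming from the rotations $x\mapsto x+1/3$ and $x\mapsto x+2/3$, which commute with $m_{-2}$. What remains to show is that no further freedom is left.

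Existence is already settled by Theorem~\ref{thm1} and Theorem~\ref{thm2}. Points of the circle are coded by sequences in $\Omega(f)$, and two sequences code the same point exactly when they are of the form $wk_-$ and $wk_+$. So I will focus on uniqueness. Suppose $h_1,h_2$ are two orientation-preserving conjugacies with the same normalization. Then $g=h_2^{-1}\circ h_1$ is an orientation-preserving homeomorphism of the circle that commutes with the realized map and fixes the chosen fixed point, which sits at $0$. The goal is $g=\mathrm{id}$.

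The key step is that $g$ preserves itineraries. Because $g$ fixes $0$ and preserves orientation, it lifts to an increasing homeomorphism of $[0,1]$. Because $g$ commutes with the map, it sends the discontinuity (the cut point $c$) to a point of the same type, hence to $c$ itself. Since $g$ fixes both $0$ and $c$, it maps each branch interval $[0,c)$ and $(c,1]$ to itself. Commutation then gives $\tau_f(g(x))=\tau_f(x)$ for every $x\notin C(f)$, and the same equality for the one-sided sequences at precritical points.

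From this, Theorem~\ref{thm2} finishes the argument. Distinct points outside $C(f)$ have distinct itineraries, because expansivity makes $C(f)$ dense, so a cylinder cannot contain an interval. Hence $g(x)=x$ on the dense set $I\setminus C(f)$, and by continuity $g=\mathrm{id}$.

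The main obstacle I expect is showing that $g$ must fix $c$, rather than, say, exchanging the roles of the two branches or moving the cut to another point of the circle. I would first try to characterize $c$ intrinsically as the unique point where the realized map fails to be a local homeomorphism compatible with orientation, or where it has a jump. If that characterization is unavailable because the circle map is continuous, I would instead use $g(0)=0$ together with monotonicity. The fixed point lies in a specific branch, and the preimage structure of $0$ under the two inverse branches pins down the partition. Comparing $g$ on the two preimages of the fixed point, and using that $g$ preserves the cyclic order of $f^{-1}(0)$ relative to $0$, should force $g(c)=c$.
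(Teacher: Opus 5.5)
Your reduction to a self-map $g=h_2^{-1}\circ h_1$ that fixes the normalized point and commutes with the realized map is fine. The step that carries your proof does not apply to the circle realization in the corollary, however: you show $g(c)=c$, then that $g$ preserves $\tau_f$, then use density of $I\setminus C(f)$. That realization is the blown-up covering $\widetilde f$ on $\widetilde S^1$ from Lemma~\ref{lem:blow-up-completion}, which is continuous. The glued interval map itself cannot be conjugate to $m_{-2}$ outside the full case, because it jumps at $c$: in the nonfull case $f(c-)=0$ and $f(c+)=1$ cannot both hold. So there is no discontinuity for $g$ to preserve. In $\widetilde S^1$ the critical point is replaced by the arc $J_0=[e_-,e_+]$. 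Nothing in the dynamics of $\widetilde f$ singles this arc out, since up to conjugacy $\widetilde f$ is $m_{-2}$ whatever $(k_-,k_+)$ is. Hence commutation alone cannot give $g(J_0)=J_0$.

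Even granting $g(J_0)=J_0$, your final step fails. The inserted arcs $J_x$, $x\in C(f)$, have full total length, so the part of $\widetilde S^1$ lying over $I\setminus C(f)$ lies in a closed set of length zero and is nowhere dense. Moreover, $\tau_f\circ\pi$ cannot distinguish points inside an inserted arc. Matching $f$-itineraries gives at most $\pi\circ g=\pi$, that is, $g=\mathrm{id}$ on a Cantor set, not on the circle. Relatedly, points of this circle are coded by all of $\{1,2\}^{\mathbb N}$ via $[\cdot]$, not only by $\Omega(f)$.

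The preimage idea in your last paragraph is the right one, but run it with the circle map's own partition and compare directly with $m_{-2}$, rather than converting it into $g(c)=c$. The chosen fixed point $x_0$ has exactly one other $\widetilde f$-preimage $y$. For the endpoint fixed point of the interval realization, $y$ lies in $J_0$, and it corresponds to $c$ only in the full case. A normalized orientation-preserving conjugacy $\psi$ must send $y$ to $1/2$, the other $m_{-2}$-preimage of $0$. It therefore sends the arcs $[x_0,y]$ and $[y,x_0]$ onto $[0,1/2]$ and $[1/2,1]$. So the $m_{-2}$-itinerary of $\psi(x)$ equals the itinerary $k(x)$ of $x$ relative to these arcs. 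Every point of $S^1$ is the circle address of its own $m_{-2}$-itinerary, so $\psi(x)=[k(x)]$ for every normalized $\psi$.

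This needs no descent to the interval and no density argument. Expansivity enters only in showing that $x\mapsto[k(x)]$ is a homeomorphism, which is existence, and that comes from Proposition~\ref{po1}, not from Theorems~\ref{thm1} and~\ref{thm2}. The argument above is exactly the uniqueness clause of Proposition~\ref{po1}, transported by Proposition~\ref{por2}, and the paper's proof of the corollary simply cites it.
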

\begin{proof}\rm
This is the normalized uniqueness in Proposition~\ref{po1}, applied to
the corresponding fixed point.
\end{proof}

\subsection{Open dynamical systems}
\par Dynamical systems with holes study points that do not fall into a
prescribed set under iteration.  Such systems were introduced by Pianigiani
and Yorke \cite{Pianigiani1979}.  They raise questions about both arithmetic
properties and dynamical behaviour (see, for example,
\cite{glendinning2015,Sidorov2003,Sidorov2014}).  In the general setting,
one considers a discrete dynamical system $(I,f)$, where $I=[0,1]$ and
$f:I\to I$ is continuous or piecewise continuous with positive topological
entropy.  Let $H\subset I$ be a connected subinterval, called the hole.  We
focus on the survivor set
$$ S_f(H)=\{x\in I: f^{n}(x)\notin H \  \forall n\geq 0\}=I\setminus\bigcup_{n=0}^{\infty}f^{-n}(H).
$$
The survivor set varies with both the hole and the map.  Many studies
investigate its Hausdorff dimension and the topological entropy
$h_{\mathrm{top}}(f|_{S_f(H)})$.  For a hole $H=(0,t)$, write
$\eta_f(t)=\dim_{\mathrm H}(S_f(0,t))$.  Urba\'nski
\cite{Urbanski1986,Urbanski1987} proved that this is a devil's staircase for
the doubling map $T_2$.  Kalle et al.~\cite{kalle2020} extended the result
to $T_\beta$ with $\beta\in(1,2]$, and Langeveld and Samuel
\cite{Langeveld2023} treated intermediate $\beta$-transformations.  Sun et
al.~\cite{sun2024} considered an expansive increasing Lorenz map with a hole
$(a,b)$ containing the discontinuity $c$ and proved that, if the map has an
ergodic absolutely continuous invariant probability measure, then
$b\mapsto h_{\mathrm{top}}(f|_{S_f(a,b)})$ is a devil's staircase for fixed
$a\in[0,c]$.
   \par All the above-mentioned results concern  increasing  Lorenz maps
or piecewise expanding maps of positive slope. Two natural questions arise:
\begin{enumerate}
\item can we extend the devil's staircase property to the decreasing cases?
\item what is the relationship between $f\in\mathrm{EDLM}$ and negative doubling maps with a hole?
\end{enumerate}

 In this paper, we start with the interval negative-doubling map
\[
T_{-2}(x):=
\begin{cases}
-2x+1,&x\in[0,1/2),\\
-2x+2,&x\in(1/2,1].
\end{cases}
\]
 We consider a hole $H=(a,b)$ with
$0<a\leq1/2\leq b<1$ and the associated survivor set $S_{-2}(a,b)$.
The endpoint cases \(a=0\) and \(b=1\) are excluded, since their survivor
sets have Hausdorff dimension zero. Indeed, when \(a=0\), every survivor
either eventually enters the endpoint cycle \(\{0,1\}\), or stays on the
right branch forever and is the fixed point \(2/3\). The case \(b=1\)
is symmetric, with fixed point \(1/3\). Thus these survivor sets are at
most countable. For the holes \((0,1/2)\) and \((1/2,1)\), the completed
critical endpoint and its surviving preimages give countably infinite
survivor sets.

Let $\ba$ be the lower kneading sequence of $a$ and
$\bb=\tau_{-2}(b+)$ the upper kneading sequence of $b$.
For the parameter definitions below, right endpoints range over
$[1/2,1)$.
Associate with $S_{-2}(a,b)$ the subshift determined by its boundary sequences:
\[
\Omega(\ba,\bb) = \{ w \in \{1,2\}^{\mathbb N} \,:\, \sigma^n w \notin (\ba,\bb) \ \mbox{for all}\ n \ge 0\}.
\]
For nonempty critical-endpoint holes, the compact itinerary subshift of
the completed survivor system may differ from this space at countably
many precritical itineraries. Their entropies agree, as established in
Proposition~\ref{proptwoholes}.
Under the extended definition of decreasing Lorenz maps used in this paper,
Theorem~\ref{thm2} identifies the kneading space with the symbolic survivor
space of $T_{-2}$ for the corresponding hole, via the negative-doubling
address map and the one-sided endpoint conventions below. If an additional
hole contains $c$, the forbidden symbolic interval enlarges, giving the two
nested holes in Proposition~\ref{proptwoholes}; this explains the use of
the extended definition.

For a general decreasing Lorenz map, the boundary sequences satisfy the
gap-avoidance condition
\[
k_-,k_+\in\Omega(k_-,k_+),
\]
which we call \emph{weak-admissibility}. This is the compatibility condition
appearing in earlier work on decreasing Lorenz maps
\cite{Glendinning2014,GlendinningHege2026}. The stronger parity conditions
in Theorem~\ref{thm1} concern the expansive subclass $\mathrm{EDLM}$; no
uniqueness or expansivity is asserted for the general DLM realization.
The following result gives the precise DLM correspondence used below.
\begin{theorem}\label{thm:dlm-hole}
Let $T_{-2}$ have a hole $H=(a,b)$ with $0<a\leq1/2\leq b<1$ and boundary
sequences $\ba,\bb$. If
\(\ba,\bb\in\Omega(\ba,\bb)\) and
\(\sigma(\ba)\prec\sigma(\bb)\),
then there exists $g\in\mathrm{DLM}$ such that
\(\Omega(g)=\Omega(\ba,\bb)\).

Conversely, if $g\in\mathrm{DLM}$ has kneading pair $(k_-,k_+)$, then
\(k_-,k_+\in\Omega(k_-,k_+)\) and
\(\Omega(g)=\Omega(k_-,k_+)\). Set
\(a:=\pi_{-2}(k_-),\ b:=\pi_{-2}(k_+)\). If
\(0<a<1/2<b<1\), then
\(\tau_{-2}(a-)=k_-\), \(\tau_{-2}(b+)=k_+\), and
\(\Omega(g)=\Omega(\tau_{-2}(a-),\tau_{-2}(b+))\).
The same conclusion holds in the degenerate case
\(a=b=1/2\) with
\((k_-,k_+)=(1(12)^\infty,2(21)^\infty)\), for which the hole is empty.
\end{theorem}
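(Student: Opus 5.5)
The proof splits into a forward (realization) direction and a converse (symbolic characterization) direction.

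\emph{Realization.} Assume $\ba,\bb\in\Omega(\ba,\bb)$ and $\sigma(\ba)\prec\sigma(\bb)$. We build $g$ by collapsing $T_{-2}$ over the gaps of the survivor set, in the spirit of the circle embedding behind Theorem~\ref{thm1}.

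First, pass to the address space. The map $\pi_{-2}:\{1,2\}^{\mathbb N}\to[0,1]$ is monotone increasing for the alternating order, so $\Omega(\ba,\bb)$ is a compact, shift-invariant, order-closed subset. It contains $\ba$ and $\bb$, and it contains no sequence strictly between them.

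Next, define $g$ on symbols. On the cylinder $[1]\cap\Omega(\ba,\bb)$ the shift $\sigma$ is order-reversing; the same holds on $[2]\cap\Omega(\ba,\bb)$. The condition $\sigma(\ba)\prec\sigma(\bb)$ says that the image of the left branch ends below where the image of the right branch begins. This is exactly the upward jump $g(c+)>g(c-)$ required in Definition~\ref{defofdecreasing}.

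Then pass back to the interval. Take an increasing surjection $\phi$ from the order-completion of $\Omega(\ba,\bb)$ onto $[0,1]$, for example the distribution function of a measure of full support on $\Omega(\ba,\bb)$ with no atoms. This sends $\ba$ and $\bb$ to a common point $c$. Each gap of $\Omega(\ba,\bb)$ other than the hole $(\ba,\bb)$ is mapped by $\sigma$ onto a gap or a point. We define $g$ affinely on the collapsed gaps, or we collapse each gap to a point. Either way we obtain two continuous strictly decreasing branches, with $g(0)=1$ and $g(1)=0$, because the minimal element $1^\infty$-type endpoint sequences are exchanged by $\sigma$.

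Finally, we check that itineraries of $g$ recover exactly $\Omega(\ba,\bb)$, using the one-sided limit conventions~\eqref{suanfa2}.

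\emph{Converse.} Let $g\in\mathrm{DLM}$ with kneading pair $(k_-,k_+)$.

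Monotonicity of itineraries under the alternating order is the key fact. If $x<y$, then $\tau_g(x\pm)\preceq\tau_g(y\pm)$. This follows by induction on the first differing index, since each decreasing branch flips the order.

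From this, every $\omega=\tau_g(x\pm)$ has each shift $\sigma^n\omega=\tau_g(g^n x\pm)$ equal to the itinerary of a point on one side of $c$. Hence $\sigma^n\omega\preceq k_-$ or $\sigma^n\omega\succeq k_+$, which gives $\Omega(g)\subseteq\Omega(k_-,k_+)$. In particular $k_\pm\in\Omega(k_-,k_+)$.

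The reverse inclusion $\Omega(k_-,k_+)\subseteq\Omega(g)$ needs a nested-interval argument. For $\omega\in\Omega(k_-,k_+)$, consider the cylinders
\[
J_n=\{x:\tau_g(x)\text{ begins with }\omega_1\cdots\omega_n\}.
\]
The admissibility of $\omega$ ensures that each $J_n$ is nonempty: the images of the branches are $[g(c+),1]$ and $[0,g(c-)]$, which reach exactly the sequences $\preceq k_-$ and $\succeq k_+$. The intersection of the closures is then a point $x$, or an interval on which all itineraries coincide, and one of $\tau_g(x\pm)$ equals $\omega$. This step is the main obstacle, because $g$ need not be expansive. Nonempty wandering intervals or homtervals can occur, and the one-sided conventions at precritical points must be handled carefully. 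I would argue by compactness: the set of realized itineraries is closed, and the cylinders are intervals, so every admissible cylinder word is realized.

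\emph{Transfer to $T_{-2}$.} For $T_{-2}$, the map $\pi_{-2}$ is injective except on pairs $wk$ and $w k'$ corresponding to preimages of $1/2$. If $0<a<1/2<b<1$, we must show $\tau_{-2}(a-)=k_-$. The idea is that $k_-\in\Omega(k_-,k_+)$ begins with $1$. If $k_-$ were an upper itinerary at a precritical point, then it would end with the $T_{-2}$ critical tail $2(21)^\infty$. Weak admissibility, combined with $a<1/2<b$, forbids this, since such a tail would produce a shift landing in $(k_-,k_+)$. Hence $\tau_{-2}(a-)=k_-$, and symmetrically $\tau_{-2}(b+)=k_+$.

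In the degenerate case $a=b=1/2$ with $(k_-,k_+)=(1(12)^\infty,2(21)^\infty)$, the hole is empty. Then $\Omega(g)$ is the full $T_{-2}$ kneading space, and both sequences equal $\Omega(\tau_{-2}(1/2-),\tau_{-2}(1/2+))$ by direct computation.
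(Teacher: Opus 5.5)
You have a genuine gap in the realization step. Your $\phi$ (the distribution function of a nonatomic measure of full support) identifies every adjacent pair of $\Omega(\ba,\bb)$, and that cannot work under the stated hypotheses.

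First, such a measure exists only if $\Omega(\ba,\bb)$ has no isolated points, and this is not guaranteed. The hole $(1/3,2/3)$ gives $\ba=1^\infty$ and $\bb=2^\infty$, two fixed points that are not precritical. The pair is weak-admissible with $\sigma(\ba)=1^\infty\prec2^\infty=\sigma(\bb)$, yet $\Omega(1^\infty,2^\infty)$ is countable: its elements alternate $1,2$ and possibly end in $1^\infty$ or $2^\infty$.

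Your alternative of defining $g$ affinely on gaps does not help. Every adjacent pair has the form $\{w\ba,w\bb\}$, so $g^{|w|}$ would send both endpoints of the inserted interval to $c$ and could not be strictly monotone there.

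More structurally, whenever a gap-collapsing $\phi$ does exist, each jump of $\Omega(\ba,\bb)$ becomes a precritical point, and jumps are dense in a compact zero-dimensional ordered set. The resulting map would therefore lie in $\mathrm{EDLM}$. By Theorem~\ref{thm1}, and since its kneading pair must be $(\max\Omega_1,\min\Omega_2)=(\ba,\bb)$, this approach can only realize decreasing-admissible pairs. The hypotheses allow pairs such as $(1^\infty,2^\infty)$, which violate the odd condition because $\sigma(\ba)=\ba$.

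Relatedly, your claim that $\sigma(\ba)\prec\sigma(\bb)$ ``is exactly the upward jump'' is unjustified. A collapsing $\phi$ identifies $\sigma(\ba)$ and $\sigma(\bb)$ when they are adjacent, i.e.\ under simultaneous odd self-returns or even cross-returns. Only decreasing admissibility rules these out (Lemma~\ref{lem:automatic-jump}).

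The missing idea, used in Lemma~\ref{lem:dlm-realization}, is to blow up points rather than gaps. Choose a countable dense set $D\subseteq\Omega(\ba,\bb)$ containing $(12)^\infty$, $(21)^\infty$, $\ba$, $\bb$ and both members of every adjacent pair, closed under shifts and admissible preimages. Take a measure with positive mass at every point of $D$. Each $\omega\in D$ then becomes a nondegenerate interval $I_\omega=[L(\omega),R(\omega)]$, mapped affinely and decreasingly onto $I_{\sigma(\omega)}$. The atoms at $\sigma(\ba)$ and $\sigma(\bb)$ give $g(c-)=L(\sigma(\ba))<R(\sigma(\bb))=g(c+)$. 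Fibre interiors, together with shared endpoints, realize every element of $\Omega(\ba,\bb)$ as a one-sided itinerary. The resulting $g$ typically has wandering or periodic intervals, as it must. (Also, the extremal sequences exchanged by $\sigma$ are $(12)^\infty$ and $(21)^\infty$, not $1^\infty$-type sequences.)

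In the converse, your proof of $\Omega(g)\subseteq\Omega(k_-,k_+)$ is fine, but the reverse inclusion is left open, and your sketch swaps the branch images. The left branch maps $[0,c)$ onto $(g(c-),1]$, whose itineraries are $\succeq\sigma(k_-)$. The right branch maps $(c,1]$ onto $[0,g(c+))$, whose itineraries are $\preceq\sigma(k_+)$. With these corrections an induction on cylinders can be made to work, but a direct argument is shorter and needs no expansivity. For $\omega\in\Omega(k_-,k_+)$ with $(12)^\infty\prec\omega\prec(21)^\infty$, put $x=\sup\{y\notin C(g):\tau_g(y)\prec\omega\}$. Monotonicity gives $\tau_g(x-)\preceq\omega\preceq\tau_g(x+)$. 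If both inequalities were strict, $x$ would be precritical with one-sided itineraries $uk_-$ and $uk_+$. Then $\omega$ would share the prefix $u$, and $\sigma^{|u|}(\omega)\in(k_-,k_+)$, a contradiction.

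Your transfer to $T_{-2}$ is essentially the paper's, with one correction. A one-sided itinerary of a precritical point ends in $1(12)^\infty$ or $2(21)^\infty$ according to the parity of the arrival time, not always in $2(21)^\infty$. Since $a<1/2<b$ puts both tails strictly inside $(k_-,k_+)$, weak admissibility excludes $a,b\in C_{-2}$ altogether. Uniqueness of addresses off $C_{-2}$ then gives $\tau_{-2}(a-)=k_-$ and $\tau_{-2}(b+)=k_+$.
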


Glendinning~\cite{GlendinningAnharmonic1993,Glendinning2014} developed
admissibility and renormalization theory for maps with decreasing branches.
Glendinning and Hege~\cite[Theorems~10 and~13]{GlendinningHege2026}
realize compatible kneading pairs in complete families of plateau maps and
describe the boundary of positive entropy, equivalently the boundary of
positive survivor-set dimension for the associated open maps. In the
decreasing case, their compatibility condition is the gap-avoidance
condition $\sigma^n(k_\pm)\notin(k_-,k_+)$ for all $n\geq0$, corresponding
to our weak-admissibility.

Keeping \(a\) fixed, \(\Omega(\ba,\bb)\) may remain unchanged as \(b\)
varies.  For fixed \(\ba\), set
\[
P(\bb):=\bigl\{\epsilon\in 2\{1,2\}^{\mathbb{N}}:
\Omega(\ba,\epsilon)=\Omega(\ba,\bb)\bigr\}.
\]
Thus \(P(\bb)\) is the plateau of right boundaries with the same
survivor space as \(\bb\). The following theorem gives its explicit structure and characterizes
when it is a singleton.

\begin{theorem}\label{plateauxin}
Fix $\ba$ and suppose that $(\ba,\bb)$ is weak-admissible.
Define
\[
n:=\min\{2j:j\geq1,\ \sigma^{2j}(\bb)=\bb\},
\qquad
m:=\min\{2j+1:j\geq0,\ \sigma^{2j+1}(\bb)=\ba\},
\]
with  $\min\varnothing=+\infty$, and put
$k:=\min\{n,m\}$.
If $k<+\infty$, put $w=\bb|_k$ and define
\[
L=
\begin{cases}
w\ba,&k\text{ even},\\
w^\infty,&k\text{ odd}.
\end{cases}
\]
Then $L\in\Omega(\ba,L)$ and $P(\bb)=(L,\bb]$.
If $k=+\infty$, then $P(\bb)=\{\bb\}$.
\end{theorem}

  This plateau structure provides the symbolic basis for the
Hausdorff-dimension result stated next.

In this paper, the \emph{devil's staircase property} means that a function
is continuous and monotone, and is locally constant outside a set of
Lebesgue measure zero. We allow the degenerate case of a constant function.

\begin{theorem}\label{theorem2}
Let $T_{-2}$ have a hole $(a,b)$ with $0<a\leq1/2\leq b<1$. Then, for fixed $a$,
the Hausdorff dimension function
\[
\eta(a,\cdot):[1/2,1)\longrightarrow[0,1],\qquad
b\longmapsto \dim_{\mathrm H}S_{-2}(a,b)
\]
is a devil's staircase in the above sense.
\end{theorem}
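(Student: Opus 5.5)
Our approach is to move the whole problem to symbolic dynamics and reduce it to three properties of entropy: monotonicity, continuity, and local constancy off a null set. First, via the negative-doubling address map $\pi_{-2}$, we identify $S_{-2}(a,b)$, up to countably many points, with $\pi_{-2}(\Omega(\ba,\bb))$. For critical-endpoint holes this step uses the completion conventions and Proposition~\ref{proptwoholes} to pass between the completed survivor subshift and $\Omega(\ba,\bb)$ without changing entropy. Since $T_{-2}$ is piecewise affine with slope of modulus $2$, the standard argument for subshifts coded by a uniformly expanding Markov-type map gives
\[
\dim_{\mathrm H}S_{-2}(a,b)=\frac{h_{\mathrm{top}}(\sigma|_{\Omega(\ba,\bb)})}{\log 2}.
\]
The upper bound comes from covering by cylinders of length $2^{-n}$. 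The lower bound comes from a Frostman/Bowen argument on a subshift of finite type approximating $\Omega(\ba,\bb)$ from inside. It therefore suffices to prove the devil's staircase property for $b\mapsto h(b):=h_{\mathrm{top}}(\Omega(\ba,\tau_{-2}(b+)))$.

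\emph{Monotonicity.} We show that $b\mapsto\tau_{-2}(b+)$ is monotone for the alternating order. For $b<b'$ in $[1/2,1)$ the hole $(a,b)$ is contained in $(a,b')$, so $S_{-2}(a,b')\subset S_{-2}(a,b)$. Hence $\eta(a,\cdot)$ is nonincreasing, and this needs no symbolic input.

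\emph{Continuity.} We plan to approximate by subshifts of finite type. For right continuity at $b$, choose $b_k\downarrow b$ with $\tau_{-2}(b_k+)$ agreeing with $\bb$ on longer and longer prefixes. The spaces $\Omega(\ba,\bb_k)$ contain the SFT $X_N$ obtained by forbidding only the cylinder interval determined by $\bb|_N$, and they increase to $\Omega(\ba,\bb)$ in the Hausdorff metric. We then use the standard fact that the entropy of the approximating SFTs $X_N\supseteq\Omega$ converges to the entropy of the limit, which holds because $h$ is upper semicontinuous under decreasing intersections of subshifts. Left continuity is the harder direction, and we expect it to be the main obstacle. The space $\Omega(\ba,\bb')$ for $b'\uparrow b$ shrinks toward a space that may differ from $\Omega(\ba,\bb)$ only when $\bb$ is an endpoint of a plateau. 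Here Theorem~\ref{plateauxin} is the tool: if $b$ is the left endpoint $L$ of a plateau $P(\bb)=(L,\bb]$, the jump in subshift is the difference between $\Omega(\ba,L)$ and the spaces with $\epsilon\in(L,\bb]$. This difference consists of sequences eventually of the form $w\ba$ or $w^\infty$, a set of zero entropy glued countably, so the entropy does not jump. To make this rigorous, we would first try a lower semicontinuity argument: for any $\delta>0$ we find an irreducible SFT $Y\subset\Omega(\ba,\bb)$ with $h(Y)>h(b)-\delta$ whose orbits stay a definite distance from $\bb$ in the alternating order. Such a $Y$ persists in $\Omega(\ba,\bb')$ for $b'$ close to $b$. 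Where $h(b)>0$ we build $Y$ by the usual "avoid a neighborhood of the boundary" construction using the weak-admissibility of Theorem~\ref{thm:dlm-hole}. Where $h(b)=0$, continuity is immediate from monotonicity and nonnegativity.

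\emph{Local constancy a.e.} The set of $b$ that lie in no nondegenerate plateau (equivalently, have no neighborhood on which $h$ is constant) is contained in the set of $b$ with $P(\tau_{-2}(b+))=\{\bb\}$ and $h(b)>0$, together with the set where $h=0$, on which $h$ is already locally constant. By Theorem~\ref{plateauxin}, $k=+\infty$ means that $\bb$ satisfies $\sigma^{n}\bb\notin(\ba,\bb)$ for all $n$ while never returning exactly to $\bb$ (at even times) or to $\ba$ (at odd times). Such $\bb$ lie in $\Omega(\ba,\bb)$, and in fact in $\Omega(\ba,\bb)$ with the orbit of $\bb$ bounded away from the hole. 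The set of all such $b$ with $\dim_{\mathrm H}S_{-2}(a,b)\le s<1$ is contained in a countable union of survivor sets $S_{-2}(a,b_0)$ with $b_0>1/2$ fixed, each of dimension at most $\eta(a,b_0)<1$. Here we use that $T_{-2}$ with a nonempty hole has dimension $<1$, via Perron–Frobenius and the irreducibility of the full shift. Hence the set has Lebesgue measure zero. Combining monotonicity, continuity, and measure zero of the non-plateau set, including the constant degenerate case (for instance $a$ with $\eta(a,1/2)=0$), gives the devil's staircase property.
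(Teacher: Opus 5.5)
\textbf{There is a genuine gap in your continuity step, and you have the two one-sided limits the wrong way round.}

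\emph{Right limit.} Since $(a,b)\subset(a,b')$ for $b<b'$, taking $b_k\downarrow b$ gives $\bb_k\succ\bb$ and $\Omega(\ba,\bb_k)\subseteq\Omega(\ba,\bb)$. Your chain $\Omega(\ba,\bb)\subseteq X_N\subseteq\Omega(\ba,\bb_k)$ would then force equality, which fails whenever $\bb\in\Omega(\ba,\bb)$. Upper semicontinuity under decreasing intersections only gives $\limsup_k h(b_k)\le h(b)$, and monotonicity already supplies that. Right continuity needs $\liminf_{b'\downarrow b}h(b')\ge h(b)$. Convergence of $\Omega(\ba,\bb_k)$ to $\Omega(\ba,\bb)$ from inside in the Hausdorff metric says nothing about entropy.

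\emph{Left limit.} This is actually the easy side. As $b'\uparrow b$ the spaces $\Omega(\ba,\bb')$ decrease, and $\bigcup_{b'<b}(\ba,\bb')=(\ba,\tau_{-2}(b-))$. Since $\tau_{-2}(b-)$ either equals $\bb$ or is adjacent to it in the alternating order, the intersection exceeds $\Omega(\ba,\bb)$ only by the countably many shift-preimages of $\tau_{-2}(b-)$. Upper semicontinuity and Lemma~\ref{lem:countable-diff-entropy} then finish, with no plateau analysis. Your zero-entropy remark, by contrast, only covers the right side.

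\emph{The missing ingredient.} Your SFT $Y\subseteq\Omega(\ba,\bb)$ with entropy $>h(b)-\delta$ whose orbits avoid a neighbourhood of $\bb$ is exactly what right continuity needs; for $b'<b$ its persistence is trivial and gives nothing. But its existence is the whole difficulty, and no construction is given:
\begin{itemize}
\item In a general subshift, removing every orbit that approaches a given point can lower entropy by a fixed amount; it empties a minimal subshift of positive entropy.
\item Weak admissibility of $(\ba,\bb)$ is not available for arbitrary $b$.
\end{itemize}
The paper does not reprove this. It imports continuity from Raith's Corollary~1.1 (Proposition~\ref{continuity}), after checking with Lemma~\ref{lem:countable-diff-entropy} that Raith's entropy agrees with the survivor entropy. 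Your lower bound in the dimension formula relies on the same unproved inner-SFT approximation. You can avoid it by pushing forward ergodic measures of nearly maximal entropy and using the mass distribution principle, or by citing Bowen's formula as the paper does.

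\textbf{The null-set step.} Your final covering by countably many survivor sets of fixed holes $(a,b_0)$ with $b_0>1/2$ is the paper's argument. The paper uses $H_N=(a,\tfrac12+\tfrac1{2(N+1)})$ and ergodicity of $T_{-2}$; your bound $\dim_{\mathrm H}S_{-2}(a,b_0)<1$ is an equally valid route to Lebesgue nullity. The reduction to singleton plateaux, however, is not valid, for three reasons:
\begin{itemize}
\item Theorem~\ref{plateauxin} presupposes weak admissibility, which a bifurcation parameter need not satisfy.
\item $\bb\in\Omega(\ba,\bb)$ is part of that hypothesis, not a consequence of $k=+\infty$.
\item A nondegenerate plateau $P(\bb)=(L,\bb]$ contains $\bb$ only as its right endpoint. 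The survivor space changes immediately to the right, so failure of local constancy of $h$ does not force $P(\bb)=\{\bb\}$.
\end{itemize}
What you need is just the contrapositive of Proposition~\ref{prprp4.2}, or geometrically the characterization in Proposition~\ref{prop:edlm-null}. If $\bb\notin\Omega(\ba,\bb)$, the survivor space is constant near $b$. Hence every point where $h$ is not locally constant satisfies $b\in S_{-2}(a,b)$, and that is all your covering uses.
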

It is clear that $\eta(a,\cdot)$ is nonincreasing. If $b$ is fixed, the
dimension function $\eta(\cdot,b)$ is nondecreasing and has the analogous
devil's staircase property.
The following corollary gives the corresponding EDLM version
of the relation between decreasing Lorenz maps and negative-doubling systems
described in
Theorem~\ref{thm:dlm-hole}.

\begin{corollary}\label{coco2}
Let $f\in\mathrm{EDLM}$ and let $H=(a,b)$ be a hole with $0<a\leq c\leq b<1$ ($c$ is the discontinuity). Put
$
\ba:=\tau_f(a-)$,  $\bb:=\tau_f(b+).
$
Assume that
$
\ba,\bb\in\Omega(\ba,\bb)$ and
$\sigma(\ba)\prec\sigma(\bb).
$
Then there exists $g\in\mathrm{DLM}$ such that
$
\Omega(g)=\Omega(\ba,\bb).
$
Conversely, let $g\in\mathrm{DLM}$ have kneading pair $(k_-,k_+)$
whose negative-doubling addresses satisfy
\[
0<a:=\pi_{-2}(k_-)<1/2<b:=\pi_{-2}(k_+)<1.
\]
Then the map $T_{-2}\in\mathrm{EDLM}$ with the hole $H=(a,b)$ satisfies
$
\tau_{-2}(a-)=k_-,$ $\tau_{-2}(b+)=k_+,
$
and
$
\Omega\bigl(\tau_{-2}(a-),\tau_{-2}(b+)\bigr)
=\Omega(k_-,k_+)=\Omega(g).
$
The same conclusion also holds in the canonical degenerate
case \(a=b=1/2\) with
\((k_-,k_+)=(1(12)^\infty,2(21)^\infty)\), where the hole is empty.
\end{corollary}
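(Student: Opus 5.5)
The plan is to reduce Corollary~\ref{coco2} to Theorem~\ref{thm:dlm-hole} by passing through the symbolic level. The point is that only the pair $(\ba,\bb)$ enters the conclusions, not the map $f$ itself.

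For the first implication, write $\ba=\tau_f(a-)$ and $\bb=\tau_f(b+)$. Since $0<a\le c\le b<1$ and $f$ is decreasing on each branch with the alternating order of \eqref{eq:alter-order}, $\ba$ begins with $1$ and $\bb$ begins with $2$ (with the endpoint conventions when $a=c$ or $b=c$). Consider the negative-doubling address map $\pi_{-2}$, and set $a':=\pi_{-2}(\ba)$ and $b':=\pi_{-2}(\bb)$. We have $0<a'\le 1/2\le b'<1$, because sequences starting with $1$ (resp. $2$) code points in $[0,1/2]$ (resp. $[1/2,1]$), and $\ba,\bb$ are not the endpoint codes $(12)^\infty$-type sequences for $0,1$ once $a,b$ lie in $(0,1)$. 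This is the step needing care: $a>0$ forces $\ba\ne\tau_f(0+)$, and $T_{-2}$ realizes all sequences by Theorem~\ref{thm2}, since $T_{-2}\in\mathrm{EDLM}$ with full kneading space.

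Next I will check that $\tau_{-2}(a'-)=\ba$ and $\tau_{-2}(b'+)=\bb$. This holds because $T_{-2}$ codes every sequence and the two codings of a precritical point are $w\,k_\mp$ by Theorem~\ref{thm2}; choosing the appropriate one-sided limit recovers the given sequence. If the choice fails for a precritical boundary, I will instead apply Theorem~\ref{thm:dlm-hole}'s symbolic hypothesis directly. Its proof only uses $\ba,\bb\in\Omega(\ba,\bb)$ and $\sigma(\ba)\prec\sigma(\bb)$, and these hypotheses transfer verbatim. Theorem~\ref{thm:dlm-hole} then yields $g\in\mathrm{DLM}$ with $\Omega(g)=\Omega(\ba,\bb)$.

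The converse is exactly the converse half of Theorem~\ref{thm:dlm-hole}, read with $f=T_{-2}$. This requires noting that $T_{-2}\in\mathrm{EDLM}$: it has slopes $-2$, $T_{-2}(0)=1$, $T_{-2}(1)=0$, $T_{-2}(1/2-)=0<1=T_{-2}(1/2+)$, and its precritical set is dense by expansion. The equalities $\tau_{-2}(a-)=k_-$ and $\tau_{-2}(b+)=k_+$, $\Omega(\tau_{-2}(a-),\tau_{-2}(b+))=\Omega(k_-,k_+)=\Omega(g)$, and the degenerate case $a=b=1/2$ are then quoted directly from that theorem. The main obstacle is the endpoint bookkeeping in the first part, namely matching one-sided itineraries of $f$ with one-sided $T_{-2}$-itineraries at precritical boundary points.
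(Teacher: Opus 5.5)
Your proposal is correct. Once you drop the detour through the $T_{-2}$ addresses $a',b'$ and use your own fallback, it is exactly the paper's argument: the forward direction is Lemma~\ref{lem:dlm-realization} applied to $(\ba,\bb)$, using that $\ba$ starts with $1$, $\bb$ starts with $2$, weak admissibility and $\sigma(\ba)\prec\sigma(\bb)$, while the converse and the degenerate case are quoted from the geometric part of Theorem~\ref{thm:dlm-hole} with $T_{-2}\in\mathrm{EDLM}$. The detour is unnecessary, and its justification ``choosing the appropriate one-sided limit'' is not valid as written, since the hole convention always uses the lower sequence at the left endpoint; but your fallback works unconditionally, so this does not affect correctness.
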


 The parity-admissible kneading classification gives the
symbolic correspondence between an $f\in\mathrm{EDLM}$ with a hole and
a negative-doubling system with the corresponding boundary sequences. A
Lebesgue-measure statement for the $f$-parameter requires the additional
pullback condition in Corollary~\ref{coco3}.

\begin{corollary}\label{coco3}
Let $f\in\mathrm{EDLM}$ have a hole $(a,b)$ with $0<a\leq c\leq b<1$, where $c$
is its discontinuity, and fix the left endpoint $a$. The entropy of the
survivor system is symbolically the entropy of a negative doubling system
with the corresponding boundary sequences.
 If the pullback-null condition~\eqref{eq:pullback-null} in
Proposition~\ref{prop:edlm-null} holds, then
$b\mapsto h_{\mathrm{top}}(S_f(a,b))$ is a devil's staircase in the same sense,
allowing the constant case.
\end{corollary}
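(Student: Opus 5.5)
The plan is to move the whole problem to the negative doubling model and use Theorem~\ref{theorem2} together with the symbolic correspondence of Theorem~\ref{thm2}. Write $\ba=\tau_f(a-)$ and $\bb=\tau_f(b+)$.

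\emph{Step 1: symbolic coding of the survivor set.} By Theorem~\ref{thm2}, the itinerary map $x\mapsto\tau_f(x\pm)$ is a surjection from $I$ onto $\Omega(f)$. It is monotone for the alternating order, and two distinct sequences have the same image point only along the countable set $C(f)$. A point $x$ survives the hole $(a,b)$ exactly when no iterate lies in $(a,b)$. By monotonicity this means $\sigma^n\tau_f(x\pm)\notin(\ba,\bb)$ for all $n$, up to countably many precritical exceptions. These exceptions are handled by the completion conventions and Proposition~\ref{proptwoholes}. So the coded survivor system is $\Omega(f)\cap\Omega(\ba,\bb)$. Since $a\le c\le b$, the forbidden interval $(k_-,k_+)$ is contained in $(\ba,\bb)$, so this intersection is simply $\Omega(\ba,\bb)$.

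\emph{Step 2: transfer to $T_{-2}$.} Theorem~\ref{thm2} applied to $T_{-2}$, whose kneading space is the full space of $\{1,2\}$-sequences up to endpoint identifications, gives a second coding. For $a'=\pi_{-2}(\ba)$ and $b'=\pi_{-2}(\bb)$, the survivor set $S_{-2}(a',b')$ is coded by $\Omega(\ba,\bb)$, again up to countably many sequences. Topological entropy is insensitive to these countable discrepancies: the factor maps are finite-to-one off a countable set, and entropy is governed by the counts of admissible words. This gives
\[
h_{\mathrm{top}}(S_f(a,b))=h_{\mathrm{top}}(\Omega(\ba,\bb))=\log2\cdot\dim_{\mathrm H}S_{-2}(a',b').
\]
The last equality is the standard identity for subshifts coded by a map of constant slope $2$ in absolute value. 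This proves the first assertion of the corollary.

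\emph{Step 3: the staircase.} Fix $a$, so $\ba$ and $a'$ are fixed. The map $\phi:b\mapsto b'=\pi_{-2}(\tau_f(b+))$ is nondecreasing and continuous, because $\pi_{-2}$ identifies the pairs $wk_-,wk_+$ in a way compatible with the gaps. Hence
\[
b\mapsto h_{\mathrm{top}}(S_f(a,b))=\log2\cdot\eta(a',\phi(b))
\]
is continuous and monotone by Theorem~\ref{theorem2}. Local constancy holds off the set $\phi^{-1}(E)$, where $E$ is the Lebesgue-null set on which $\eta(a',\cdot)$ fails to be locally constant. The only extra point is that plateaux in $b'$ pull back to open plateaux in $b$, up to the boundary points.

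\emph{Main obstacle.} The difficulty is showing that $\phi^{-1}(E)$ has Lebesgue measure zero. A monotone continuous map can pull a null set back to a set of positive measure, so this does not follow from the symbolic argument. This is exactly what the pullback-null condition~\eqref{eq:pullback-null} in Proposition~\ref{prop:edlm-null} provides, and I would invoke it directly at this point. Without~\eqref{eq:pullback-null} only continuity and monotonicity survive, which is why the statement is conditional.
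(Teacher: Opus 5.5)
\textbf{Gap in Step~3: $\phi$ is not continuous.} Your map $\phi(b)=\pi_{-2}(\tau_f(b+))$ (the paper's $\Phi_f$) fails to be continuous whenever $(k_-,k_+)\neq(\ell_0,r_0)$. In that case $A=\pi_{-2}(k_-)<B=\pi_{-2}(k_+)$, and Lemma~\ref{lem:address-injectivity} says $\pi_{-2}$ is \emph{injective} on $\Omega(k_-,k_+)$. So $\pi_{-2}$ separates the two one-sided sequences $wk_-,wk_+$ of a precritical point rather than identifying them. The identification you have in mind is made by the collapse map $Q$ of Section~\ref{constr}, not by $\pi_{-2}$. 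For instance, if $x\in(c,1)$ and $f(x)=c$, then $\tau_f(x-)=2k_+$ and $\tau_f(x+)=2k_-$, so
\[
\lim_{t\uparrow x}\phi(t)=1-\tfrac{B}{2}<1-\tfrac{A}{2}=\phi(x).
\]
In general $\phi$ jumps at every point $x$ of the dense set $C(f)\cap(c,1)$. The jump is across an interval $[p,q]$ whose interior is a component of $T_{-2}^{-n}((A,B))$, where $n$ is the first arrival time of $x$ at $c$. Consequently both conclusions you draw from continuity of $\phi$ are unsupported. Continuity of $b\mapsto\log2\cdot\eta(a',\phi(b))$ does not follow from continuity of $\eta(a',\cdot)$. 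And local constancy of $\eta(a',\cdot)$ at $\phi(b)$ gives nothing on a left neighbourhood of $b$ when $b\in C(f)$.

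\textbf{How to repair it.} For continuity, you have two options. Either cite Proposition~\ref{continuity} (Raith) directly for $f$, as the paper does. Or show that $\eta(a',\cdot)$ is constant on each jump interval $[p,q]$: since $a'\le A$ and $p\ge B$, every hole $(a',b')$ with $b'\in(p,q]$ contains $(A,B)$. Moreover, $T_{-2}^n$ maps $(p,q)$ into $(A,B)$ without meeting $1/2$, so $S_{-2}(a',\cdot)$ is constant on $(p,q]$, and continuity of $\eta(a',\cdot)$ covers $p$.

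For local constancy, note that $\phi$ is continuous at each $b\notin C(f)$, since finite itinerary prefixes are locally constant there. So the entropy is locally constant off $C(f)\cup\phi^{-1}(E)$, and the countable set $C(f)$ must be added to your exceptional set. This matches the inclusion $E_f(a)\subseteq C(f)\cup\Phi_f^{-1}(E_{-2}(m))$ of Proposition~\ref{prop:edlm-null}, where $m=a'$. Your $E$ (the entropy bifurcations of $T_{-2}$) lies in the survivor bifurcation set $E_{-2}(m)$, so condition~\eqref{eq:pullback-null} does make $\phi^{-1}(E)$ null.

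\textbf{Comparison with the paper.} With these corrections your route works, and it differs from the paper's. The paper bounds the survivor bifurcation set $E_f(a)$ through the self-survival characterization $E_f(a)=\{b:b\in S_f(a,b)\}$ and then uses $B_f(a)\subseteq E_f(a)$. You instead pull the entropy staircase of $T_{-2}$ back directly, so you never need that characterization.

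\textbf{Minor point in Step~2.} Countable discrepancies do not change entropy because of the measure argument of Lemma~\ref{lem:countable-diff-entropy}, not because of word counts. Citing Proposition~\ref{proptwoholes} covers this.
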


Section~\ref{sese2} collects notation and entropy and dimension results.
Section~\ref{secsec3} proves the EDLM classification and kneading-space
description in Theorems~\ref{thm1} and~\ref{thm2}.
Section~\ref{secsec4} proves Theorems~\ref{thm:dlm-hole},
\ref{plateauxin}, and~\ref{theorem2}, covering DLM realization, survivor
plateaux, and dimension. Section~\ref{secfinal} gives the EDLM entropy
correspondence and conditional Lebesgue-null result.

\section{Preliminaries}\label{sese2}
\subsection{Notations}
For the negative doubling map defined in the introduction, the completed
critical values are
\[
T_{-2}(c-):=\lim_{x\uparrow c}T_{-2}(x)=0,\qquad
T_{-2}(c+):=\lim_{x\downarrow c}T_{-2}(x)=1.
\]
By equation~\eqref{suanfa2}, the kneading invariants of \(T_{-2}\) are
$(k_-,k_+)=(1(12)^\infty,2(21)^\infty)$.
For $u=u_1u_2\cdots\in\{1,2\}^{\mathbb{N}}$, define the  address map by
$$
\pi_{-2}(u):=-\sum_{n=1}^{\infty}\frac{u_n}{(-2)^n}\in[0,1].
$$
The alphabet $\{1,2\}$ makes this address formula direct; with labels
$\{0,1\}$, one would replace $u_n$ by $u_n+1$.

 For a symbolic sequence
$\omega=\omega_1\omega_2\cdots\in\{1,2\}^{\mathbb N}$, we use the
corresponding circle projection
\[
[\omega]=\pi_{-2}(\omega)\pmod 1\in S^1.
\]
Thus \(\pi_{-2}\) is the real-valued interval address map, while
the brackets \([\omega]\) denote the circle address of the sequence
$\omega$.
Let
\[
C_{-2}:=\bigcup_{n\geq0}T_{-2}^{-n}(1/2)
\]
be the critical and precritical set.  Here \(\tau_{-2}(x\pm)\) denotes
the one-sided itinerary with respect to the partition at \(1/2\).  If
\(x\notin C_{-2}\), the itinerary of \(x\) is unique.  If \(x\in C_{-2}\), we use the two
one-sided itineraries \(\tau_{-2}(x-)\) and \(\tau_{-2}(x+)\); they are
the only possible ambiguity and satisfy
\[
\pi_{-2}\bigl(\tau_{-2}(x-)\bigr)
=\pi_{-2}\bigl(\tau_{-2}(x+)\bigr)=x.
\]
The two sequences at a precritical point remain distinct symbolic
boundaries despite having the same interval address.

We equip the space $\{1,2\}^\mathbb{N}$ of infinite sequences with the topology induced by the usual metric $d \colon \{1,2\}^\mathbb{N} \times \{1,2\}^\mathbb{N} \to \mathbb{R}$ which is given by
\begin{align}
d(\omega, \nu) :=
\begin{cases}
0 & \text{if} \; \omega = \nu,\\
2^{- \lvert\omega \wedge \nu\rvert + 1} & \text{otherwise}.
\end{cases}
\end{align}
Here $\rvert \omega \wedge \nu \lvert := \min \, \{ \, n \in \mathbb{N} \colon \omega_{n} \neq \nu_n \}$, for all $\omega = (\omega_{1}\omega_{2}\dots) , \nu = ( \nu_{1} \nu_{2}\dots) \in \{1, 2\}^{\mathbb{N}}$ with $\omega \neq \nu$. If two sequences have the same first $N$ symbols, then their
interval addresses differ by at most $2^{-N}$.  Note that the topology induced by $d$ on $\{ 1, 2\}^{\mathbb{N}}$ coincides with the product topology on $\{ 1, 2\}^{\mathbb{N}}$.  For $n\in\mathbb{N}$ and $\omega\in\{1,2\}^{\mathbb{N}}$, we set $\omega|_{1}^{n}=\omega|_{n}=(\omega_{1}\cdots\omega_{n})$ and call $n$ the length of $\omega|_{n}$. We let $\sigma \colon \{ 1, 2 \}^{\mathbb{N}} \circlearrowleft$ denote the \textsl{left-shift map} which is defined by $\sigma(\omega_{1} \omega_{2} \dots) := (\omega_{2} \omega_{3}\dots)$. A \textsl{subshift} is any closed subset $\Omega \subseteq \{1,2\}^\mathbb{N}$ such that $\sigma(\Omega) \subseteq \Omega$.  Given a subshift $\Omega$ and $n \in \mathbb{N}$ we set
\begin{align}
\Omega\lvert_{n} := \left\{ (\omega_{1} \dots \omega_{n}) \in \{ 1, 2\}^{n} \colon \text{there exists} \, \omega \in \Omega \, \ \text{with} \ \, \omega|_{n} = (\omega_{1} \dots \omega_{n}) \right\}
\end{align}
and  write $\Omega^{*} := \bigcup_{n = 1}^{\infty} \Omega\lvert_{n}$ for the collection of all finite words.  For $\xi\in\Omega^{*}$, we denote $|\xi|$ as the length of $\xi$. And we denote by $\#\Omega|_{n}$ the cardinality of $\Omega|_{n}$.

\par For $n,m\in\mathbb{N}$, let
$\nu=(\nu_1\dots\nu_n)$ and $\xi=(\xi_1\dots\xi_m)$ be finite words over
$\{1,2\}$.  Set
\[
\nu\xi:=(\nu_1\dots\nu_n\xi_1\dots\xi_m).
\]
 We  use the same notation when $\xi \in \{ 1, 2\}^{\mathbb{N}}$.  An infinite word $\omega = (\omega_{1} \omega_{2} \dots) \in \{1, 2\}^{\mathbb{N}}$ is called \textsl{periodic} with \textsl{period} $n \in \mathbb{N}$ if and only if, $(\omega_{1}\dots \omega_{n}) = (\omega_{(m - 1)n + 1} \dots \omega_{m n})$, for all $m \in \mathbb{N}$; in which case we write $\omega = (\omega_{1} \dots \omega_{n})^{\infty}$.  Similarly,  $\omega = (\omega_{1}\omega_{2} \dots) \in \{1, 2\}^{\mathbb{N}}$ is called \textsl{eventually periodic} with \textsl{period} $n \in \mathbb{N}$ if and only if there exists $k \in \mathbb{N}$ such that $(\omega_{k+1} \dots \omega_{k+n}) = (\omega_{k+(m - 1)n + 1} \dots, \omega_{k+ m n})$ for all $m \in \mathbb{N}$; in which case we write $\omega = \omega_{1}\dots \omega_{k} (\omega_{k+1} \dots \omega_{k+n})^{\infty}$.

Let $f$ be a decreasing Lorenz map with critical point $c$.
On a decreasing branch, the shift exchanges the two one-sided itineraries:
\begin{equation}\label{eq:one-sided-shift}
\tau_f\bigl(f(x)\pm\bigr)=\sigma\bigl(\tau_f(x\mp)\bigr),
\qquad x\in(0,1)\setminus\{c\}.
\end{equation}
 At $x=c$  the
branch-limit identities at the discontinuity are written separately as
\[
\tau_f\bigl(f(c-)+\bigr)=\sigma(k_-),
\qquad
\tau_f\bigl(f(c+)-\bigr)=\sigma(k_+),
\]
whenever these one-sided image itineraries are needed.
This is the standard identity in
Glendinning~\cite[Section~3, Lemma~2]{Glendinning2014}, written in our
notation.

\subsection{Topological entropy}
The standard definition for the topological entropy of continuous maps \cite{de1993} using $(n,\epsilon)$-separated sets can be used to define entropy for piecewise continuous maps. An alternative way of calculating topological entropy, which is particularly convenient for the symbolic approach, is via the lap numbers \cite{milnor1988iterated} or the cardinality of finite words, i.e.,
$$ h_{\mathrm{top}}(f)=h_{\mathrm{top}}(\sigma|_{\Omega(f)})=\displaystyle{\lim_{n \to \infty}  \frac{\log \left( \# \Omega(f)|_{n}  \right)}{n}}=\displaystyle{\inf_{n}  \frac{\log \left( \# \Omega(f)|_{n}  \right)}{n}}.
$$
The limit exists by subadditivity of $\{\log(\#\Omega(f)|_n)\}_{n\in\mathbb N}$.
For simplicity, we denote $h_{\mathrm{top}}(\sigma|_{\Omega(f)})$ as $h_{\mathrm{top}}(\Omega(f))$ when we do not want to mention the left shift map $\sigma$.
Similarly, $h_{\mathrm{top}}(S_f(a,b))$ denotes the entropy of the
survivor restriction with the prescribed endpoint convention; for an
empty hole it denotes the full-system entropy $h_{\mathrm{top}}(f)$.
\par Here, we introduce the concept of the kneading determinant, which originates from \cite{milnor1988iterated}. Further developments can also be found in \cite{Glendinning1996} and \cite{barnsley2014}. Let $f$ be a decreasing Lorenz map with kneading invariants $(k_-,k_+)$, where $k_{+}=(v_{1}v_{2}\cdots)$ and $k_{-}=(w_{1}w_{2}\cdots)$. Then the {\bf kneading determinant} is a formal power series defined as $K(z) = {K_ + }(z) - {K_ - }(z)$ , where ${K_ + }(z) = \sum\limits_{i = 1}^\infty  {v_{i}{z^{i-1}}}$, and ${K_ - }(z) = \sum\limits_{i = 1}^\infty  {w_{i}{z^{i-1}}}.$

\par Barnsley et al.\ \cite{barnsley2014} work directly on the symbolic
space and do not require it to be the address space of a particular map.
They also treat the alternating case, which is directly applicable to our
setting. Recall that
\[
\Omega(\ba,\bb) = \{ w \in \{1,2\}^{\mathbb N} : \sigma^n w \notin (\ba,\bb)
\ \text{for all}\ n \ge 0\}.
\]

\begin{lemma}[{\bf\textup{\cite[Lemma 3]{barnsley2014}}}]\label{entropy}
Let $\ba = a_1a_2\cdots$ and $\bb = b_1b_2\cdots$ be sequences in
$\{1,2\}^{\mathbb N}$ with $a_1 = 1$ and $b_1 = 2$. Suppose that
$\ba, \bb \in \Omega(\ba,\bb)$  and that
$h_{\mathrm{top}}(\Omega(\ba,\bb)) > 0$. For the alternating lexicographic
order on $\Omega(\ba,\bb)$, set
\[
K(z) := \sum_{n=1}^\infty (b_n - a_n) z^{n-1}.
\]
Then $h_{\mathrm{top}}(\Omega(\ba,\bb)) = \log t'$, where $-1/t'$ is the
largest negative root of $K(z)$.
\end{lemma}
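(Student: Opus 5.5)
The idea is to pass from the symbolic space $\Omega(\ba,\bb)$ to a decreasing Lorenz map of constant slope $-\beta$ with $\beta=e^{h}$, where $h:=h_{\mathrm{top}}(\Omega(\ba,\bb))>0$. For such a map, the coincidence of the two one-sided critical itineraries at the single point $c$ forces $K(-1/\beta)=0$. A separate counting argument then shows that $K$ has no zero in $(-1/\beta,0)$, so that $-1/t'=-1/\beta$ is the largest negative root.

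\emph{Step 1 (constant-slope model).} Since $\ba,\bb\in\Omega(\ba,\bb)$ and $a_1=1$, $b_1=2$, the subshift $\Omega(\ba,\bb)$ is the kneading space of an abstract decreasing Lorenz map. This is the realization direction of Theorem~\ref{thm:dlm-hole}, which we may take as given, or which we can build by ordering $\Omega(\ba,\bb)$ alternately and collapsing gaps. Because $h>0$, we apply the Parry/Milnor--Thurston semiconjugacy. We take the measure $\mu$ of maximal entropy, and form its distribution function along the alternating order, $\phi(\omega)=\mu\{\nu\preceq\omega\}$. We want $\phi$ to be continuous and monotone, and to intertwine $\sigma$ with a map $g$ on $[0,1]$ whose two branches are affine with slope $-\beta$. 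We have to check two things. First, $\mu$ is non-atomic, which holds since $h>0$. Second, the decreasing branches reverse the order, which is exactly why the alternating order is used. This is the step I expect to be the main obstacle: proving that the slope is exactly $-\beta$, and not merely some piecewise expanding slope, requires the conformality of the maximal-entropy measure, $\mu(\sigma A)=\beta\mu(A)$ on cylinders where $\sigma$ is injective. I would try to obtain this first from the Markov/Parry construction on an approximating sequence of subshifts of finite type, and then pass to the limit.

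\emph{Step 2 (the kneading identity).} On branch $i\in\{1,2\}$ write $g(x)=-\beta x+d_i$. Inverting gives $x=(d_{\epsilon_0}-g(x))/\beta$, and iterating yields the expansion
\[
x=\sum_{n\ge0}d_{\epsilon_n}(-1)^n\beta^{-(n+1)}
\]
along the itinerary $\epsilon_0\epsilon_1\cdots$ of $x$; the series converges since $\beta>1$. Apply this to the two one-sided itineraries $\ba=\tau(c-)$ and $\bb=\tau(c+)$ of the same point $c$. Subtracting the two expansions, using that $d_{b_n}-d_{a_n}=(b_n-a_n)(d_2-d_1)$, and noting $d_2-d_1\neq 0$ (the upward jump), we obtain
\[
\sum_{n\ge1}(b_n-a_n)(-1/\beta)^{n-1}=0,\qquad\text{that is, } K(-1/\beta)=0.
\]

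\emph{Step 3 (largest negative root).} It remains to exclude roots of $K$ in $(-1/\beta,0)$. Here I would use the Milnor--Thurston style counting identity: the generating function $\sum_n\#\Omega(\ba,\bb)|_n\,z^n$ can be written as a rational expression in $K(z)$, with $K$ in the denominator up to factors analytic and nonvanishing in $|z|<1$. Hence any zero of $K$ with $|z|<1/\beta$ would give this series a radius of convergence smaller than $e^{-h}=1/\beta$, contradicting the definition of $h$. Alternatively, one can argue by monotonicity: if $K(-s)=0$ for some $s<1/\beta$, Steps~1 and~2 run in reverse produce a slope $-1/s$ model. That model has entropy $\log(1/s)>h$ and carries the same kneading pair, which contradicts the fact that entropy depends only on $\Omega(\ba,\bb)$. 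Either route gives $t'=\beta$, and therefore $h_{\mathrm{top}}(\Omega(\ba,\bb))=\log t'$.
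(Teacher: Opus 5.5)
The paper gives no proof of this lemma; it is imported from \cite[Lemma~3]{barnsley2014}. Your argument therefore has to stand on its own, and as written it has two genuine gaps.

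\textbf{Step~1.} Step~2 is correct once you have the exact relations $\phi(\sigma\omega)=d_{\omega_1}-\beta\phi(\omega)$ on $\Omega(\ba,\bb)$. Step~1, however, obtains them from a false premise: the measure of maximal entropy is not $\beta$-conformal in general. Already for an irreducible subshift of finite type, the Parry measure has Jacobian $\beta\,\ell_{x_2}/\ell_{x_1}$, where $\ell$ is the left Perron vector. This is only cohomologous to the constant $\beta$; for the golden-mean shift it takes the values $1$, $\varphi$ and $\varphi^2$. Its distribution function therefore does not straighten the branches, and approximating by Parry measures cannot produce conformality in the limit.

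What Step~2 needs is a non-atomic measure $\nu$ with $\nu(\sigma A)=e^{h}\nu(A)$ for Borel sets $A$ contained in one first-symbol cylinder. For a subshift of finite type this is $\nu[w_1\cdots w_n]\propto\beta^{-(n-1)}r_{w_n}$, with $r$ the right Perron vector. Constructing such a measure with exponent exactly $e^{h}$ on the possibly non-transitive, non-Markov space $\Omega(\ba,\bb)$ is the whole content of the constant-slope semiconjugacy theorem, and the proposal does not supply it.

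A smaller point: the realization direction of Theorem~\ref{thm:dlm-hole} also requires $\sigma(\ba)\prec\sigma(\bb)$, which is not a hypothesis. It does follow from $h>0$. Otherwise at most one word of each length has two left extensions, so $\#\Omega(\ba,\bb)|_n$ grows at most linearly.

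\textbf{Step~3.} This step is also not established, and your second route is invalid. The equation $K(-s)=0$ only says that the two slope-$(-1/s)$ series along $\ba$ and $\bb$ give the same number. It does not make $(\ba,\bb)$ the kneading pair of the slope-$(-1/s)$ map; that would require the $(-1/s)$-addresses of all shifts to respect the alternating order. Your realizability claim never uses $s<1/\beta$. For $s>1/\beta$ the same reasoning would give a model of entropy $\log(1/s)<h$ with the same kneading pair, again a contradiction. So the argument would exclude every root in $(-1,0)$ other than $-1/\beta$. That is false: in Example~\ref{newplatex}, $K_{\bb_h^-}$ vanishes at $z_0=(1-\sqrt5)/2$ and also at $z_1=-\bigl((\sqrt5-1)/2\bigr)^{1/3}\approx-0.852$.

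Your first route is the right one, but the identity it rests on is the actual proof, and you only postulate it (it also lives in the variable $t=-z$). Concretely, take a realizing map $g$ with first-precritical sets $C_n$. Then $\#C_{n+1}=\#C_n+\#\bigl(C_n\cap(g(c-),g(c+))\bigr)$. Count the points of $C_n$ below a given point recursively along its itinerary, and apply this to $g(c\mp)$, whose itineraries are $\sigma(\ba)$ and $\sigma(\bb)$. This gives $(1-t)\gamma(t)K(-t)=1$ for $\gamma(t)=\sum_n\#C_nt^n$. Moreover $\#\Omega(\ba,\bb)|_n=1+\sum_{j<n}\#C_j$, so $\gamma$ has radius of convergence $e^{-h}<1$. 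Hence $K(-t)\neq0$ for $|t|<e^{-h}$, and Pringsheim's theorem forces $K(-e^{-h})=0$. This proves the lemma directly and makes Steps~1 and~2 unnecessary.
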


\par Raith~\cite{raith1989,raith1992,raith1994} treats piecewise monotonic
interval maps and survivor sets obtained by removing finitely many open
intervals.  Corollary~1.1 of~\cite{raith1994} gives continuity of the
survivor topological entropy with respect to the hole endpoints and does not
require expansion.  For an EDLM, the two branch restrictions are continuous
and strictly monotone, so the map is piecewise monotonic in Raith's sense.
Raith defines entropy using a continuous completion obtained by splitting
partition endpoints and their preimages; see~\cite[Section~1,
equation~(1.4)]{raith1994}. We verify below that this entropy agrees with
the survivor entropy used here, including the critical-endpoint and
empty-hole cases.

\begin{proposition}[{\cite[Corollary 1.1]{raith1994}}]\label{continuity}
Let $f\in\mathrm{EDLM}$ have critical point $c$ and a hole $H=(a,b)$.
For fixed $a\leq c$, the entropy map
$\lambda(a,\cdot):b\mapsto h_{\mathrm{top}}(S_f(H))$ is continuous on
$[c,1)$.
\end{proposition}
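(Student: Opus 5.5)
Since the statement is attributed to Raith's Corollary~1.1 in~\cite{raith1994}, the proof is mainly a matter of checking that the hypotheses match and that Raith's entropy agrees with the survivor entropy defined here. Continuity of $\lambda(a,\cdot)$ would then follow directly from Raith's theorem.

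First, an EDLM $f$ is piecewise monotonic in Raith's sense. The two laps $[0,c)$ and $(c,1]$ are intervals on which $f$ is continuous and strictly decreasing, and each extends continuously to the closed lap using the one-sided limits $f(c\pm)$. The hole $H=(a,b)$ is a single open interval, so $S_f(H)$ is the survivor set obtained by removing one open interval, which is exactly Raith's setting.

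Second, I would compare the two entropies. Raith uses a continuous completion: split the point $c$ and all its preimages into two one-sided copies, obtaining a compact space $\hat I$ with a continuous map $\hat f$, and take the entropy of $\hat f$ on the compact survivor set $\hat S$. The itinerary map sends $\hat S$ onto the closure of the set of survivor itineraries. By expansivity (density of $C(f)$), this map is injective on $\hat I$, so it conjugates $\hat f|_{\hat S}$ with a subshift. By Theorem~\ref{thm2}, this subshift differs from the itinerary subshift of our survivor system, with either completion, by at most countably many sequences of the form $w k_\pm$ together with their shifts. These extra sequences lie in the orbit closures of points already present, or form a countable invariant set. A countable closed invariant set carries only measures supported on countably many orbits, and each such measure has zero entropy. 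By the variational principle, adding or removing it does not change $h_{\mathrm{top}}$. Equivalently, the number of words of length $n$ changes only by a subexponential amount, namely at most polynomially many extra words coming from the boundary sequences.

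Third, I would handle the critical cases. When $b=c$ or $a=c$, we must use the left or right completion; the argument above still applies, since the completion picks one of the two split copies of $c$. When $a=b=c$ the hole is empty, the survivor set is all of $\hat I$, and its entropy equals $h_{\mathrm{top}}(f)$, which matches the convention set in the preliminaries. Raith's continuity holds in the hole endpoints, so continuity on $[c,1)$ follows, including at $b=c$ where the hole shrinks to empty or critical.

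I expect the main obstacle to be this identification at critical endpoints. One has to justify that the completed orbit through $c$, which may return to $c$ and then differ from $k_\pm$, contributes zero entropy. I would argue it through word counting: the extra admissible $n$-words are prefixes of shifts of finitely many boundary sequences, so there are $O(n)$ of them.
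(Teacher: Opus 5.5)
Your route is the paper's. You check that an EDLM with one hole fits Raith's piecewise-monotone setting, code both Raith's split-space survivor system and the completed survivor system into compact subshifts, and show these differ in at most countably many itineraries. You then equate the entropies, treat the critical-endpoint and empty-hole cases, and quote Raith's Corollary~1.1. Your measure step is the paper's Lemma~\ref{lem:countable-diff-entropy}, but state it correctly. The set $X\triangle Y$ need be neither closed nor invariant. The point is that an ergodic measure charging a countable set has an atom, hence lives on a periodic orbit, so every positive-entropy ergodic measure on one subshift is carried by the other.

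Three smaller points. (i) You take for granted that the completed system is compact and coded by a subshift. The paper justifies this: the completed map is continuous on $I\setminus(a,b)$ because a retained critical endpoint is approached only from its selected side, so $S_f(a,b)$ is compact and its itinerary map is a conjugacy onto a compact subshift. (ii) If, as the paper assumes, Raith's split space also doubles the hole endpoints and their preimages, the two-symbol itinerary map is at most $2$-to-$1$ rather than injective. This is harmless, since finite-to-one factors preserve entropy. (iii) Countability of the exceptional itineraries comes from countability of $C(f)$ and of the preimages of $a,b$, not from Theorem~\ref{thm2}. As you note yourself, completed itineraries through a returning critical orbit need not have the form $wk_\pm$.

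The step that would fail is the word-counting argument you plan to use at critical endpoints. The extra $n$-words are in general neither $O(n)$ nor polynomially many, and they are not prefixes of shifts of finitely many boundary sequences. Suppose an exceptional itinerary $\xi$ is isolated in the larger subshift by a cylinder of $\xi|_N$. This happens, for example, for the left-completed itinerary of $c$ when the critical orbit returns to $c$ from the hole side, since then all points just left of $c$ escape. Then every surviving preimage $w\xi$ contributes its own extra word $w\,\xi|_N$, which occurs in no sequence of the smaller subshift. The number of such $w$ of length $n-N$ can grow exponentially. The entropies still agree, but only because of the variational-principle argument. That argument already covers the returning critical orbit, since its completed itineraries and their surviving preimages form a countable set. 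Drop the word-counting claim and rely on the measure argument alone.
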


Given two sets $X$ and $Y$, denote by
$X \triangle Y := (X \setminus Y)\cup
(Y \setminus X)$ their symmetric difference.

\begin{lemma}\label{lem:countable-diff-entropy}
Let $X$ and $Y$ be compact subshifts over a finite alphabet. If
$X\triangle Y$ is at most countable, then
\[
h_{\mathrm{top}}(X)=h_{\mathrm{top}}(Y).
\]
\end{lemma}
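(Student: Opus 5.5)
The plan is to compare the word-counting functions $\#X|_n$ and $\#Y|_n$ and show that the countable discrepancy does not change their exponential growth rates. By symmetry it suffices to prove $h_{\mathrm{top}}(X)\le h_{\mathrm{top}}(Y)$, that is, $h_{\mathrm{top}}(X)\le h_{\mathrm{top}}(X\cap Y)$, since $X\cap Y$ is a closed subset of $Y$. The quantity $h_{\mathrm{top}}(X\cap Y)$ should be read as the growth rate of $\#(X\cap Y)|_n$, or equivalently as the $(n,\epsilon)$-separated entropy of the compact shift-invariant set $X\cap Y$; it is a subshift, because $\sigma(X\cap Y)\subseteq X\cap Y$. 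Write $D:=X\setminus Y$, which is countable by hypothesis.

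\textbf{Step 1: isolate the positive-entropy case.} If $h_{\mathrm{top}}(X)=0$ there is nothing to prove. Otherwise I use the variational principle for the subshift $(X,\sigma)$: there is an ergodic invariant probability measure $\mu$ on $X$ with $h_\mu(\sigma)>h_{\mathrm{top}}(X)-\delta$, and for $\delta$ small we may assume $h_\mu>0$.

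\textbf{Step 2: the measure lives on $X\cap Y$.} An ergodic measure with positive entropy is non-atomic; an atom would force $\mu$ to sit on a single periodic orbit, which has zero entropy. Hence $\mu(D)=0$, because $D$ is countable and $\mu$ has no atoms. So $\mu$ is supported on the closed set $X\cap Y$, and it remains $\sigma$-invariant there.

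\textbf{Step 3: conclude.} Applying the variational principle on the subshift $X\cap Y\subseteq Y$ gives
\[
h_{\mathrm{top}}(Y)\ge h_{\mathrm{top}}(X\cap Y)\ge h_\mu(\sigma)>h_{\mathrm{top}}(X)-\delta .
\]
Letting $\delta\to0$ yields $h_{\mathrm{top}}(X)\le h_{\mathrm{top}}(Y)$, and exchanging the roles of $X$ and $Y$ gives equality.

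\textbf{Main obstacle.} The delicate point is that the inequality cannot be obtained by direct counting. The closure of a countable set can carry entropy, and $X|_n$ may contain words that occur only in $D$ and never in $X\cap Y$, so word counts need not be close for each fixed $n$. The measure-theoretic route in Steps 1 and 2 sidesteps this. The step to check carefully is the claim that positive entropy rules out atoms: if $\mu(\{x\})>0$, then invariance and finiteness of $\mu$ force $x$ to be periodic, and ergodicity then makes $\mu$ the uniform measure on that periodic orbit, whose entropy is $0$.
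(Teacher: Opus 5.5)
Your proof is correct and follows the paper's route. The variational principle gives an ergodic $\mu$ on $X$ with entropy close to $h_{\mathrm{top}}(X)$; positive entropy makes $\mu$ nonatomic, so the countable discrepancy is $\mu$-null; and $\mu$ then bounds $h_{\mathrm{top}}(Y)$ from below. Passing through $X\cap Y$ and justifying nonatomicity via recurrence to a periodic orbit are only minor elaborations of the paper's argument, which asserts $\mu(Y)=1$ directly and states nonatomicity without proof.
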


\begin{proof}
We prove that $h_{\mathrm{top}}(Y)\geq h_{\mathrm{top}}(X)$; the reverse
inequality follows by interchanging $X$ and $Y$. If
$h_{\mathrm{top}}(X)=0$, there is nothing to prove. Otherwise, by the
variational principle, for every $\varepsilon>0$ there is an ergodic
$\sigma$-invariant probability measure $\mu$ supported on $X$ with
$h_\mu(\sigma)>h_{\mathrm{top}}(X)-\varepsilon>0$ (after reducing
$\varepsilon$ if necessary). A positive-entropy ergodic measure is
nonatomic, and hence $\mu(X\triangle Y)=0$. Since $\mu(X)=1$, it follows
that $\mu(Y)=1$. As $Y$ is closed and shift-invariant, $\mu$ is also an
invariant measure for $\sigma|_Y$, and therefore
\[
h_{\mathrm{top}}(Y)\geq h_\mu(\sigma)>h_{\mathrm{top}}(X)-\varepsilon.
\]
Letting $\varepsilon\downarrow0$ gives the desired inequality.
\end{proof}

\begin{proof}[Proof of Proposition~\ref{continuity}]
Take $K=1$ and $Q=(a,b)$ in Raith's notation. Suppose first that $a<b$.
The selected completion is continuous on $I\setminus(a,b)$, since any
retained critical endpoint is approached only from its selected side.
Its finite-time survivor sets, and hence $S_f(a,b)$, are compact.
Labelling the retained left and right components by $1$ and $2$ gives a
continuous itinerary map. Density of $C(f)$ makes this map injective:
two distinct points with the same itinerary would leave the interval
between them free of precritical points. Thus it conjugates the completed
survivor system to a compact subshift $X_H$.

Let $R_Y$ be Raith's compact survivor system in the split space, and let
$Y_H$ be its image under the two-branch itinerary map. This map is
continuous because the two sides of $c$ are separated. The same
expansivity argument shows that a fibre can contain only the two split
copies of a single interval point. This finite-to-one factor therefore
preserves entropy, so Raith's survivor entropy equals
$h_{\mathrm{top}}(Y_H)$.
Outside the partition and hole endpoints and their preimages, the two
systems have the same itineraries and the same hole-avoidance condition.
The exceptional points form a countable set, since each branch is
strictly monotone. Consequently, $X_H\triangle Y_H$ is at most countable,
and Lemma~\ref{lem:countable-diff-entropy} gives
\[
h_{\mathrm{top}}(S_f(a,b))
=h_{\mathrm{top}}(X_H)=h_{\mathrm{top}}(Y_H).
\]
For $a=b=c$, Raith's completion has the full kneading space $\Omega(f)$;
the same finite-to-one coding and the symbolic entropy formula above give
the entropy $h_{\mathrm{top}}(f)=h_{\mathrm{top}}(S_f(c,c))$.

Thus the two entropy conventions agree for all the holes under
consideration. Raith's parameter space permits coincident hole endpoints,
so Corollary~1.1 of~\cite{raith1994} gives continuity as $b$ varies in
$[c,1)$, including right-continuity at $b=c$ and at the empty hole.
\end{proof}

\subsection{Hausdorff dimension}\label{subsecdimensiondef}
For a set $K\subset\mathbb R$ and $s\geq0$, its $s$-dimensional
Hausdorff measure is defined by
\[
\mathcal H^s(K)=\lim_{\delta\downarrow0}
\inf\left\{\sum_i(\operatorname{diam}U_i)^s:
K\subseteq\bigcup_iU_i,\ \operatorname{diam}U_i\leq\delta\right\}.
\]
The Hausdorff dimension of $K$ is
\[
\dim_{\mathrm H}K:=\inf\{s\geq0:\mathcal H^s(K)=0\}.
\]

Let $T$ be a piecewise $C^{1+\alpha}$ expanding interval map and let
$K$ be a compact invariant set avoiding its discontinuities.  For a
continuous potential $\varphi$, write
\[
P_T(\varphi,K):=\sup_{\mu\in\mathcal M_T(K)}
\left(h_\mu(T)+\int_K\varphi\,d\mu\right),
\]
where $\mathcal M_T(K)$ denotes the set of $T$-invariant Borel
probability measures supported on $K$.  For the geometric potential
$\varphi_s=-s\log|T'|$, the standard Bowen formula states that, under
the usual expansion and bounded-distortion hypotheses,
\[
\dim_{\mathrm H}K=s_*,
\qquad
P_T(-s_*\log|T'|,K)=0.
\]
See, for example, Raith~\cite{raith1989,raith1994}.  We use this formula
below only in the expanding negative-doubling setting.  For
$T_{-2}$, the slope is constant on both branches, so
\[
P_{T_{-2}}(-s\log2,S_{-2}(a,b))
=h_{\mathrm{top}}(S_{-2}(a,b))-s\log2,
\]
and hence
\[
\dim_{\mathrm H}S_{-2}(a,b)
=\frac{h_{\mathrm{top}}(S_{-2}(a,b))}{\log2}.
\]

The requirement that \(K\) avoid the discontinuities is a standard
sufficient hypothesis for the Bowen formula: it makes the restriction
of \(T\) to \(K\) continuous and removes any ambiguity in the
geometric potential. For \(T_{-2}\), if \(a<c<b\), then \(c\) belongs
to the hole and \(S_{-2}(a,b)\) automatically avoids the discontinuity.
If \(a=c<b\), we use the left completion at \(c\); since
\(S_{-2}(c,b)\cap(c,b)=\varnothing\), the restriction of the completed
map to the compact survivor set is continuous in the relative topology.
The case \(a<c=b\) is symmetric, using the right completion. Thus the
same constant-slope pressure calculation applies to these nonempty
endpoint cases. In the empty-hole case \(a=b=c\), one has
\(S_{-2}(c,c)=[0,1]\) and \(\dim_{\mathrm H}S_{-2}(c,c)=1\) directly.
The critical point and its preimages form a countable set, so retaining
or deleting the corresponding one-sided boundary points does not change
Hausdorff dimension.

For a general EDLM, equality of symbolic survivor spaces alone does not
imply equality of Hausdorff dimensions; an additional geometric
regularity or distortion assumption is required.

\section{Circle embedding and realization of EDLMs}\label{secsec3}
\subsection{Expansive endomorphisms of the circle}
Let \(S^1=\mathbb R/\mathbb Z\), and define
\[
m_{-2}:S^1\to S^1,\qquad
m_{-2}(\theta)=-2\theta\pmod 1.
\]
 The proof of the main results involves embedding our topologically
expansive decreasing Lorenz maps into $m_{-2}$. For any sequence
$\omega=(a_1a_2\cdots)\in\{1,2\}^{\mathbb N}$, define
 $$ [\omega]
:=
-\sum_{i=1}^{\infty}\frac{a_i}{(-2)^i}
 \pmod 1
 \in S^1.
 $$
These two definitions are compatible in the form
\begin{equation}\label{eq:negative-address-shift}
m_{-2}([\omega])=[\sigma(\omega)].
\end{equation}
\begin{proposition}\label{po1}
 Let \(g:S^1\to S^1\) be a topologically expansive
 continuous orientation-reversing covering of degree \(-2\).
 Then \(g\) has exactly three fixed points.
For each fixed point \(x_0\), there exists a unique
orientation-preserving homeomorphism
\(\phi:S^1\to S^1\) such that
$
\phi(x_0)=0$ and $
\phi\circ g=m_{-2}\circ\phi.$
\end{proposition}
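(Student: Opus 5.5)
We prove this by lifting to the real line and comparing lifts directly, without a separate symbolic coding. Let \(G:\mathbb R\to\mathbb R\) be a lift of \(g\). Since \(g\) is an orientation-reversing covering of degree \(-2\), \(G\) is a strictly decreasing homeomorphism with \(G(x+1)=G(x)-2\).

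\textbf{Counting fixed points.} Fixed points of \(g\) correspond to solutions of \(G(x)=x+k\) with \(x\in[0,1)\) and \(k\in\mathbb Z\). The function \(x\mapsto G(x)-x\) is strictly decreasing and continuous. Since \(G(x+1)-(x+1)=G(x)-x-3\), it maps \([0,1)\) bijectively onto an interval of length \(3\) that is half-open. Such an interval contains exactly three integers, so \(g\) has exactly three fixed points. Expansivity is not needed for this step.

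\textbf{Existence of the conjugacy.} Fix a fixed point \(x_0\) and translate so that \(x_0=0\). Choose the lift \(G\) with \(G(0)=0\); then \(M(x)=-2x\) is the matching lift of \(m_{-2}\). We look for a lift \(\Phi\) of \(\phi\) satisfying
\[
\Phi(x+1)=\Phi(x)+1,\qquad \Phi\circ G=M\circ\Phi,\qquad \Phi(0)=0.
\]
\begin{enumerate}[(a)]
\item Write the equation as \(\Phi=M^{-1}\circ\Phi\circ G\), and set \(\Phi_0=\mathrm{id}\), \(\Phi_{n+1}=M^{-1}\circ\Phi_n\circ G\).
\item Each \(\Phi_n\) is increasing, since it composes two decreasing maps with an increasing one. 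Each \(\Phi_n\) also commutes with \(x\mapsto x+1\), because \(G\) shifts by \(-2\) and \(M^{-1}\) divides a shift of \(-2\) by \(-2\). Each \(\Phi_n\) fixes \(0\).
\item Using periodicity, \(\|\Phi_{n+1}-\Phi_n\|_\infty\le \tfrac12\|\Phi_n-\Phi_{n-1}\|_\infty\). Hence \(\Phi_n\) converges uniformly to a continuous, nondecreasing degree-one map \(\Phi\) that solves the equation.
\end{enumerate}
Then \(\Phi\) is a semiconjugacy onto \(m_{-2}\) with \(\phi(x_0)=0\). It remains to show that \(\Phi\) is injective.

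\textbf{Injectivity (the main obstacle).} Suppose \(\Phi\) were constant on a nondegenerate interval \(J\). Then \(\Phi\) is constant on every iterate \(G^n(J)\), read modulo \(1\). Hence the arc \(g^n(J)\) never covers \(S^1\): otherwise \(\Phi\) would be constant on a full period, contradicting \(\Phi(x+1)=\Phi(x)+1\). So every iterate \(g^n\) is injective on \(J\) and the arcs \(g^n(J)\) stay short. This contradicts expansivity. Under expansivity with constant \(\delta\), some iterate must separate two points of \(J\) by at least \(\delta\) while \(g^n(J)\) remains an arc. The cleanest route is to note that iterated preimages of a point are dense: expansivity, together with the arcs \(g^n(J)\) never covering \(S^1\), implies their lengths are bounded below by \(\delta\). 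A compactness argument then yields a wandering arc or a periodic arc on which some iterate is an orientation-preserving or orientation-reversing homeomorphism fixing the endpoints, and such a map is not expansive. If "topologically expansive" in the paper means density of the preimages of a point, as for EDLMs, the argument is even more direct: a plateau \(J\) of \(\Phi\) cannot contain two distinct points whose forward orbits are separated by the partition at the preimages of \(x_0\). Therefore \(\Phi\) is strictly increasing, and \(\phi\) is an orientation-preserving homeomorphism.

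\textbf{Uniqueness.} If \(\phi_1\) and \(\phi_2\) both work, then \(\psi=\phi_2\circ\phi_1^{-1}\) is an orientation-preserving homeomorphism commuting with \(m_{-2}\) and fixing \(0\). Its lift \(\Psi\) with \(\Psi(0)=0\) satisfies \(\Psi=M^{-1}\circ\Psi\circ M\). Iterating this gives
\[
\|\Psi-\mathrm{id}\|_\infty\le 2^{-n}\|\Psi-\mathrm{id}\|_\infty
\]
for every \(n\), which forces \(\Psi=\mathrm{id}\) and hence \(\phi_1=\phi_2\).
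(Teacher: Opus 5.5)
Your fixed-point count (the strictly decreasing function $G(x)-x$ sweeps a half-open interval of length $3$) is correct. So is your construction of the monotone degree-one semiconjugacy by the contraction $\Phi\mapsto M^{-1}\circ\Phi\circ G$, and so is your uniqueness argument via $\Psi=M^{-1}\circ\Psi\circ M$. All three are more elementary than the paper's Lefschetz count and itinerary coding. The gap is in the injectivity step, which is the only place expansivity is used.

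You correctly show that if $\Phi$ is constant on a nondegenerate interval $J$, then every $g^n$ is injective on $J$. Nothing after that is an actual argument. The claim that expansivity forces $|g^n(J)|\ge\delta$ for all $n$ is unjustified: positive expansivity applied to two distinct points of $g^N(J)$ only gives \emph{some} later $n\ge N$ with $|g^n(J)|>\delta$. The ``compactness argument'' producing a wandering or periodic arc is never supplied. Moreover, for an arbitrary interval $J$, an overlap $g^i(J)\cap g^j(J)\neq\varnothing$ does not by itself give an arc mapped into itself. (An orientation-reversing homeomorphism of an arc also swaps its endpoints rather than fixing them.) The closing remark about density of preimages does not help either: here topological expansivity means positive expansivity, which is how the paper uses it for the blown-up covering $\widetilde f$.

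The missing idea is to work with the full fibres $J_z=\phi^{-1}(z)$, which are points or closed arcs of length less than $1$. The semiconjugacy gives $g(J_z)\subseteq J_{m_{-2}(z)}$, and your covering argument shows that every $g^n$ is injective on $J_z$. Split into two cases according to the $m_{-2}$-orbit of $z$.

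\emph{Not eventually periodic.} The arcs $J_{m_{-2}^n(z)}$ are pairwise disjoint, so their lengths tend to $0$; this is the opposite of your lower bound. Choose $N$ so that these arcs are shorter than $\delta$ for all $n\ge N$. Two distinct points of $g^N(J_z)$ then stay $\delta$-close forever, contradicting positive expansivity.

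\emph{Eventually periodic.} Injectivity of the iterates makes some fibre $J_w$ in the periodic part nondegenerate. An even iterate $g^q$ maps $J_w$ increasingly and injectively into itself. Such a map is either the identity, or points near the attracting endpoint of a component of the complement of its fixed-point set converge monotonically to that endpoint and so stay arbitrarily close. Either way this contradicts positive expansivity of $g^q$, which follows from that of $g$ by uniform continuity.

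This fibre dichotomy is exactly the paper's argument; with it inserted, your proof is complete.
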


\begin{proof}
Since $g$ is locally orientation reversing, each fixed point is isolated
and has index $+1$. The Lefschetz number $L(g)=1-\deg(g)=3$ therefore
gives exactly three fixed points.

Fix one, $x_0$, and let $y$ be its other preimage. The counterclockwise
arcs $I_1=[x_0,y]$ and $I_2=[y,x_0]$ define itineraries
$k(x)=\epsilon_0\epsilon_1\cdots$, with $\epsilon_i=j$ when
$g^i(x)\in I_j$, using one-sided itineraries at partition boundaries.
At $x_0$ these are $(12)^\infty,(21)^\infty$, with interval addresses
$0,1$; at $y$ they are $\ell_0:=1(12)^\infty,r_0:=2(21)^\infty$,
both with address $1/2$. Further preimages have the two itineraries
$w\ell_0,wr_0$, again with equal addresses. Thus
$\phi(x):=[k(x)]$ is well defined and continuous. The counterclockwise
labeling and alternating order make $\phi$ monotone of degree $1$,
and~\eqref{eq:negative-address-shift} gives
$\phi(x_0)=0$ and $\phi\circ g=m_{-2}\circ\phi$.

Each fibre $J_z:=\phi^{-1}(z)$ is a proper closed arc or a point,
and $g(J_z)\subset J_{m_{-2}(z)}$. If $g|_{J_z}$ were not injective,
a strictly decreasing lift would give $g(J_z)=S^1$, a contradiction.
Every point of $J_{m_{-2}(z)}$ has two preimages in
$J_z\cup J_{z+1/2}$, one in each fibre by injectivity. Hence
$g:J_z\to J_{m_{-2}(z)}$ is a homeomorphism.
If $J_z$ is nontrivial and $z$ is eventually periodic, an iterate
restricts to a positively expansive homeomorphism of a nontrivial
closed interval, which is impossible. Otherwise its forward fibres
are pairwise disjoint and their lengths tend to zero; two distinct
points in a sufficiently late fibre then remain within an
expansiveness constant forever. Thus every fibre is a singleton and
$\phi$ is an orientation-preserving homeomorphism.

Any other normalized orientation-preserving conjugacy $\psi$ must
send $y$ to $1/2$ and $I_1,I_2$ to $[0,1/2],[1/2,1]$, respectively.
It therefore preserves itineraries, so $\psi(x)=[k(x)]=\phi(x)$.
\end{proof}

\subsection{\texorpdfstring{Constructing an $f\in\mathrm{EDLM}$ with given sequences}{Constructing an EDLM with given sequences}}\label{constr}

\begin{lemma}\label{lem:upward-jump}
Every topologically expansive decreasing Lorenz map satisfies
$f(c-)<f(c+).$
Consequently, its kneading invariants satisfy
$
\sigma(k_-)\prec\sigma(k_+).
$
\end{lemma}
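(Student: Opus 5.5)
The plan has two parts: first rule out $f(c-)\geq f(c+)$ using density of $C(f)$, then convert the strict jump into the symbolic inequality with the one-sided shift identities.

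\textbf{The jump.} Suppose, for contradiction, that $f(c-)\ge f(c+)$. The left branch maps $[0,c)$ onto $(f(c-),1]$, because $f(0)=1$ and the branch is strictly decreasing. The right branch maps $(c,1]$ onto $[0,f(c+))$. If $f(c+)<f(c-)$, the open interval $J:=(f(c+),f(c-))$ is nonempty and misses $f(I\setminus\{c\})$. The only possible point of $J$ in the image is the single value $f(c)$. I would use this gap to contradict density of $C(f)$. Every preimage of $c$ of order $n\ge1$ has the form $f(x)$ with $x\in f^{-(n-1)}(c)\setminus\{c\}$, so it lies in $f(I\setminus\{c\})$. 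Hence $C(f)\cap J\subseteq\{c,f(c)\}$, a finite set. This is impossible, since $C(f)$ is dense and $J$ is a nonempty open interval.

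The borderline case $f(c-)=f(c+)=:p$ needs a separate argument, and I expect it to be the main obstacle. Here $f$ extends to a continuous surjection of $I$ whose two decreasing branches both pass through $p$. I would argue that the orbit of $p$ produces an interval free of precritical points. Take the sub-interval of $[0,c)$ or $(c,1]$ adjacent to $c$. Both branches send points near $c$ to points near $p$, so the preimages of $c$ near $c$ are the $f$-preimages of the preimages of $c$ near $p$. I would track the small intervals $(c-\delta,c)$ and $(c,c+\delta)$ forward. Their images form a single interval $U_1$ around $p$, and after each later step the image is again an interval. If none of these images ever contains $c$, then $(c-\delta,c+\delta)$ contains no precritical point other than $c$, contradicting density. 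If some image contains $c$, I would shrink $\delta$ and use the fact that $c$ has two preimage branches that fold together at $p$. This makes the symbolic coding of $(c-\delta,c)$ and $(c,c+\delta)$ identical after one step, so the intervals $f^{n}((c-\delta,c))$ and $f^{n}((c,c+\delta))$ coincide for all $n\ge1$. An alternative, which I would try first, uses the circle model of Proposition~\ref{po1}. In the equal-values case, gluing $0\sim1$ would make the induced circle map have a critical fold at $c$ rather than being a degree $-2$ covering. Expansivity of the covering then fails, since two points symmetric about $c$ have the same image.

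\textbf{The symbolic consequence.} Once $f(c-)<f(c+)$ is established, I choose non-precritical points $x$, $y$ with $f(c-)<x<y<f(c+)$, taking $x$ close to $f(c-)$ and $y$ close to $f(c+)$. By \eqref{eq:one-sided-shift} and the branch-limit identities, $\sigma(k_-)=\tau_f(f(c-)+)$ and $\sigma(k_+)=\tau_f(f(c+)-)$. The itinerary map is nondecreasing in the alternating order, since each decreasing branch reverses order and the alternating order flips parity at each step. This gives $\sigma(k_-)\preceq\tau_f(x)\preceq\tau_f(y)\preceq\sigma(k_+)$. Because $C(f)$ is dense, some precritical point separates $x$ and $y$, so $\tau_f(x)\ne\tau_f(y)$, and monotonicity makes the middle inequality strict. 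The outer inequalities are limits along sequences of non-precritical points, so strictness survives. A single precritical point $z\in(x,y)$ gives $\tau_f(z-)\neq\tau_f(z+)$, and these two values lie between $\sigma(k_-)$ and $\sigma(k_+)$. Therefore $\sigma(k_-)\prec\sigma(k_+)$.
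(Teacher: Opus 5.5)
Your derivation of $\sigma(k_-)\prec\sigma(k_+)$ from the jump is essentially the paper's argument. Density of $C(f)$ makes the one-sided itinerary map strictly order-preserving in the alternating order. The branch-limit identities $\sigma(k_-)=\tau_f(f(c-)+)$ and $\sigma(k_+)=\tau_f(f(c+)-)$ then carry $f(c-)<f(c+)$ over to the symbolic level. Your remark that strictness of the outer inequalities ``survives'' the limit is false in general, since limits preserve only weak inequalities. You do not need it, though: the strict middle inequality already suffices.

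The first half is unnecessary, and the argument you give for it is wrong. It is unnecessary because $f(c-)<f(c+)$ is one of the defining conditions in Definition~\ref{defofdecreasing}(ii), and the paper's proof simply cites it. If you do want to show that density of $C(f)$ forces the jump, neither of your cases works as written.

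\emph{The case $f(c+)<f(c-)$.} Here you reverse the direction of preimages. A point $y$ of order $n\ge1$ satisfies $f(y)\in f^{-(n-1)}(c)$, but $y$ itself need not lie in $f(I\setminus\{c\})$. So the claim $C(f)\cap J\subseteq\{c,f(c)\}$ is unjustified, and it can fail, since $f^{-1}(c)$ may well be a point of $J$. The correct observation is that the branch images $(f(c-),1]$ and $[0,f(c+))$ are disjoint. Hence $f$ is injective off $c$, and $C(f)$ is a single backward chain. A chain point landing in $J$ has no branch preimage, so the chain stops there and $C(f)\cap J$ has at most two points.

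\emph{The case $f(c-)=f(c+)=p$.} There is no fold here. The branches run from $1$ down to $p$ and from $p$ down to $0$, so with $f(c):=p$ they form a strictly decreasing homeomorphism of $I$. Points on opposite sides of $c$ go to opposite sides of $p$. Consequently your claims that symmetric points share an image and that $f^n((c-\delta,c))=f^n((c,c+\delta))$ are false. Gluing $0\sim1$ gives a degree $-1$ homeomorphism, to which Proposition~\ref{po1} does not apply. The right argument is that $C(f)$ lies in the backward orbit of $c$ under this homeomorphism. Since $f^{-2}$ is increasing, the even and odd parts of that orbit are monotone sequences. So $C(f)$ has at most two accumulation points and is not dense.
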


\begin{proof}
The critical-value inequality is part of the definition. Density of
the preimages of $c$ makes the one-sided itinerary map strictly
order-preserving: an interval between distinct points with equal
itineraries would contain no preimage of $c$. The branch shift
identities therefore give
\[
\sigma(k_-)=\tau_f\bigl(f(c-)+\bigr)
\prec\tau_f\bigl(f(c+)-\bigr)=\sigma(k_+).
\]
\end{proof}

\begin{definition}\label{def:decreasing-admissible}
Let $k_-\in1\{1,2\}^{\mathbb N}$ and
$k_+\in2\{1,2\}^{\mathbb N}$.  The pair $(k_-,k_+)$ is called
\emph{decreasing-admissible} if
$
\sigma(k_-)\prec\sigma(k_+),
$
and, for every $j\geq0$,
\begin{equation}\label{eq:adm-even-odd}
\left\{
\begin{aligned}
\sigma^{2j}(k_-)&\notin(k_-,k_+],\qquad
\sigma^{2j}(k_+)\notin[k_-,k_+),\\
\sigma^{2j+1}(k_-)&\notin[k_-,k_+),\qquad
\sigma^{2j+1}(k_+)\notin(k_-,k_+].
\end{aligned}
\right.
\end{equation}
\end{definition}

The following proposition is the decreasing, parity-sensitive analogue of the
necessity part of Hubbard--Sparrow's Theorem~1~\cite{Hubbard1990}.
Their increasing case is expressed by a single pair of inequalities,
whereas the reversal of the two branches in the present setting requires
separate even- and odd-iterate conditions.

\begin{proposition}\label{prop:necessity}
If $f\in\mathrm{EDLM}$, then its kneading pair $(k_-,k_+)$ is decreasing
admissible.
\end{proposition}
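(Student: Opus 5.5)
The plan is to derive everything from one monotonicity principle for one-sided itineraries, combined with the side-swapping shift identity~\eqref{eq:one-sided-shift}.

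The first-symbol conditions are immediate. Points $y\uparrow c$ satisfy $y<c$, so $k_-=\tau_f(c-)$ begins with $1$. Likewise $k_+$ begins with $2$. The inequality $\sigma(k_-)\prec\sigma(k_+)$ is exactly Lemma~\ref{lem:upward-jump}.

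\emph{Step 1 (order).} I would first isolate the order-preserving property already used in the proof of Lemma~\ref{lem:upward-jump}. If $0\le x<y\le 1$, then $\tau_f(x+)\prec\tau_f(y-)$ strictly, and for every $x$ one has $\tau_f(x-)\preceq\tau_f(x+)$. The reason is that on a small interval free of $C(f)$ the itinerary is constant. For $x<y$, density of $C(f)$ gives a precritical point $z\in(x,y)$ with $f^n(z)=c$. Orbits of nearby points then agree up to time $n$ and split to opposite sides of $c$ at time $n$. Each earlier decreasing step reverses orientation, so the inequality at coordinate $n+1$ appears with exactly the parity sign built into the alternating order~\eqref{eq:alter-order}. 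Hence this order is the one in which $\tau_f$ is increasing.

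Two consequences at the critical point follow:
\[
y<c\ \Rightarrow\ \tau_f(y\pm)\prec k_-, \qquad y>c\ \Rightarrow\ \tau_f(y\pm)\succ k_+ .
\]
Moreover $\tau_f(c-)=k_-$ and $\tau_f(c+)=k_+$, and every one-sided itinerary lies outside the open interval $(k_-,k_+)$.

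\emph{Step 2 (tracking sides along the critical orbits).} Set $y_n:=f^n(c-)$, understood via the branch-limit value at the first step and via one-sided limits thereafter. Iterating the branch identities, namely $\tau_f(f(c-)+)=\sigma(k_-)$ followed by~\eqref{eq:one-sided-shift}, gives
\[
\sigma^{n}(k_-)=\tau_f\bigl(y_n\,s_n\bigr),\qquad s_n=\begin{cases}- & n \text{ even},\\ + & n\text{ odd},\end{cases}
\]
because each decreasing branch swaps $\pm$. Symmetrically, with $z_n:=f^n(c+)$,
\[
\sigma^n(k_+)=\tau_f(z_n\,s_n'),
\]
where $s_n'=+$ for even $n$ and $-$ for odd $n$. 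If an orbit passes through $c$ (or through $0$ or $1$), I would avoid the issue by defining $y_n$ as the limit of $f^n(x)$ for $x\uparrow c$ outside $C(f)$. The side is then preserved by the same local monotonicity, so the formula remains valid.

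\emph{Step 3 (case check).} For $\sigma^{2j}(k_-)=\tau_f(y-)$, there are two cases.
\begin{itemize}
\item If $y\le c$, Step 1 gives $\tau_f(y-)\preceq\tau_f(c-)=k_-$.
\item If $y>c$, there are non-precritical points in $(c,y)$, so $\tau_f(y-)\succ k_+$ strictly.
\end{itemize}
Thus $\sigma^{2j}(k_-)\notin(k_-,k_+]$. For $\sigma^{2j+1}(k_-)=\tau_f(y+)$, the case $y<c$ gives $\tau_f(y+)\prec k_-$ strictly, while $y\ge c$ gives $\tau_f(y+)\succeq k_+$. Hence $\sigma^{2j+1}(k_-)\notin[k_-,k_+)$. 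The two conditions for $k_+$ follow in the same way with the sides exchanged.

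The main obstacle is Step 1 together with the side bookkeeping in Step 2. I must verify that the alternating order, with sign given by the parity of the first differing index as in~\eqref{eq:alter-order}, matches the orientation reversals exactly. I must also verify that the endpoint cases ($y=c$, or the orbit landing on $c$) give the non-strict inequality on precisely the correct side. I would check this on the example $T_{-2}$, where $k_\pm=1(12)^\infty,2(21)^\infty$ and every shift lands on the fixed-point boundary itineraries.
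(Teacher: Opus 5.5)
Your proposal is correct and follows essentially the same route as the paper: strict order preservation of one-sided itineraries (from density of $C(f)$) gives avoidance of the gap $(k_-,k_+)$, and the side-swapping identity~\eqref{eq:one-sided-shift} shows that even iterates keep the original side while odd iterates switch it, which excludes exactly the endpoint equalities forbidden in~\eqref{eq:adm-even-odd}. Your explicit bookkeeping $\sigma^n(k_-)=\tau_f(y_n s_n)$, with $y_n$ defined as a one-sided limit so that passages through $c$ are handled, followed by the case check of $y_n$ against $c$, is a more detailed version of the paper's short parity argument.
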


\begin{proof}
Lemma~\ref{lem:upward-jump} gives $\sigma(k_-)\prec\sigma(k_+)$.
Strict order preservation of itineraries implies that no realized
itinerary lies in $(k_-,k_+)$, so all shifts of $k_-,k_+$ avoid this gap.
The one-sided shift identity~\eqref{eq:one-sided-shift}
\cite[Section~3, Lemma~2]{Glendinning2014} determines the possible
endpoint equalities: each decreasing branch reverses side.
Even iterates preserve side and thus exclude the cross returns
$\sigma^n(k_-)=k_+$ and $\sigma^n(k_+)=k_-$; odd iterates exchange
side and exclude the self-returns $\sigma^n(k_-)=k_-$ and
$\sigma^n(k_+)=k_+$. Together with gap avoidance, these are precisely
\eqref{eq:adm-even-odd}.
\end{proof}

Let
$
k_-\in 1\{1,2\}^{\mathbb N}$ and $
k_+\in 2\{1,2\}^{\mathbb N}$
be two sequences such that
$(k_-,k_+)$ is decreasing-admissible.
We construct an EDLM whose kneading invariants are exactly $(k_-,k_+)$.
We first separate the full negative-doubling case from the case of a
nonempty arc to be collapsed. Put
\[
\ell_0:=1(12)^\infty,\qquad r_0:=2(21)^\infty,
\qquad A:=\pi_{-2}(k_-),\quad B:=\pi_{-2}(k_+).
\]

\begin{lemma} \label{lem:address-injectivity}
Let $(k_-,k_+)$ be decreasing-admissible. Then
$0<A\leq1/2\leq B<1$, and $A=B$ if and only if
$(k_-,k_+)=(\ell_0,r_0)$. If $A<B$, the restriction of $\pi_{-2}$ to
$\Omega(k_-,k_+)$ is injective and strictly order-preserving.
\end{lemma}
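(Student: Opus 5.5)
The plan is to work entirely on the symbolic side and use only two facts about the negative-doubling address map: $\pi_{-2}$ is monotone for the alternating order, and it identifies two distinct sequences only in the pattern $w\ell_0$, $wr_0$.

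\textbf{Step 1: weak monotonicity and the fibres of $\pi_{-2}$.} Take $\omega\neq\nu$ differing first at position $k$. Then
\[
\pi_{-2}(\omega)-\pi_{-2}(\nu)=-\sum_{n\ge k}\frac{\omega_n-\nu_n}{(-2)^n}.
\]
The leading term has modulus $2^{-k}$. Its sign is positive exactly when $\omega\succ\nu$ at position $k$ in the alternating order (odd $k$ with $\omega_k>\nu_k$, or even $k$ with $\omega_k<\nu_k$). The tail has modulus at most $\sum_{n>k}2^{-n}=2^{-k}$. Hence $\omega\prec\nu$ implies $\pi_{-2}(\omega)\le\pi_{-2}(\nu)$. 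Equality forces every tail term to have maximal modulus and the opposite sign, which means $\sigma^k(\omega),\sigma^k(\nu)$ are the two extreme sequences $(12)^\infty,(21)^\infty$ in the appropriate order. Thus equal addresses occur exactly for pairs $w\ell_0$, $wr_0$ with $|w|=k-1$.

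\textbf{Step 2: the bounds on $A$ and $B$.} Since $k_-$ begins with $1$, we have $k_-\preceq\ell_0$, because $\ell_0$ is the maximum of $1\{1,2\}^{\mathbb N}$. Therefore $A\le\pi_{-2}(\ell_0)=1/2$, and symmetrically $B\ge1/2$.

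For $A>0$: the address $0$ is attained only by $(12)^\infty$. If $k_-=(12)^\infty$, then $\sigma^2(k_-)=k_-$. We also have $k_-\prec k_+$ (first letters differ), so $\sigma^{2}(k_-)=k_-\notin(k_-,k_+]$ is not contradicted directly. However, $\sigma(k_-)=(21)^\infty$ is the maximum, so $\sigma(k_-)\succeq\sigma(k_+)$. If this is strict it contradicts $\sigma(k_-)\prec\sigma(k_+)$. If it is equal, then $k_+=(21)^\infty$, and $\sigma(k_+)=k_-$ lies in $[k_-,k_+)$; this is forbidden by the odd-iterate condition $\sigma^{2j+1}(k_+)\notin(k_-,k_+]$ only if we are careful. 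I would instead use $\sigma^{1}(k_-)=(21)^\infty=k_+\in[k_-,k_+)$? It is not in that half-open set, so I will check the cases concretely: $\sigma(k_+)=(12)^\infty=k_-$ must avoid $(k_-,k_+]$ — allowed. So the extreme case needs a separate argument. I expect the right tool is $\sigma(k_-)\prec\sigma(k_+)$ with both equal to $(21)^\infty$, which fails strictly. Hence $A>0$, and symmetrically $B<1$.

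\textbf{Step 3: $A=B$ iff $(k_-,k_+)=(\ell_0,r_0)$.} By Step 2, $A=B$ forces $A=B=1/2$. Since $\pi_{-2}(1\eta)=\tfrac12-\tfrac12\pi_{-2}(\eta)$, we get $\pi_{-2}(\sigma k_-)=0$, so $\sigma(k_-)=(12)^\infty$ and $k_-=\ell_0$. Likewise $k_+=r_0$. The converse is immediate.

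\textbf{Step 4: injectivity for $A<B$ (the main obstacle).} Suppose $\omega\ne\nu$ in $\Omega(k_-,k_+)$ have the same address. By Step 1 they are $w\ell_0$ and $wr_0$, so $\ell_0,r_0\in\Omega(k_-,k_+)$. Every sequence strictly between $\ell_0$ and $r_0$ in the alternating order has address $1/2$, so the interval $[\ell_0,r_0]$ contains only these two sequences. Hence $\ell_0\preceq k_-$ and $k_+\preceq r_0$ would not create a gap. But $\ell_0$ must avoid $(k_-,k_+)$, and $\ell_0$ is the largest word starting with $1$ while $k_-\preceq\ell_0$. So either $k_-=\ell_0$, or $k_-\prec\ell_0\prec k_+$, and the latter is forbidden. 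Thus $k_-=\ell_0$, and symmetrically $k_+=r_0$, which gives $A=B$, a contradiction.

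Strict order preservation then follows from weak monotonicity together with injectivity.
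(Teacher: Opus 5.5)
Your proof is correct and follows essentially the same route as the paper. Both arguments get monotonicity of $\pi_{-2}$ by comparing the first differing digit against the maximal tail, and identify the nontrivial fibres as $\{w\ell_0,wr_0\}$. Both obtain $A\le 1/2\le B$ from the first symbols and $0<A$, $B<1$ from $\sigma(k_-)\prec\sigma(k_+)$. Both get injectivity because $A<B$ forces $\ell_0$ or $r_0$ into $(k_-,k_+)$.

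The one blemish is the detour in the $A>0$ step. There, $\sigma(k_+)=(21)^\infty$ means $k_+=r_0$, not $k_+=(21)^\infty$. The detour is also unnecessary: since $\sigma(k_-)=(21)^\infty$ is maximal, $\sigma(k_-)\succeq\sigma(k_+)$ already contradicts the strict inequality $\sigma(k_-)\prec\sigma(k_+)$, in the equal subcase as well as the strict one.
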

\begin{proof}
The address map is non-decreasing in the alternating order, as follows by
comparing the first differing digit with the largest possible tail.
It satisfies
\[
\pi_{-2}(1\omega)=\frac{1-\pi_{-2}(\omega)}2,
\qquad
\pi_{-2}(2\omega)=\frac{2-\pi_{-2}(\omega)}2.
\]
Its unique addresses of $0$ and $1$ are $(12)^\infty$ and
$(21)^\infty$, respectively. Thus $A=0$ would make $\sigma(k_-)$ the
largest sequence, and $B=1$ would make $\sigma(k_+)$ the smallest,
contradicting $\sigma(k_-)\prec\sigma(k_+)$. The first symbols give
$A\leq1/2\leq B$, with equality $A=B$ precisely for
$(k_-,k_+)=(\ell_0,r_0)$.

To check injectivity, suppose that distinct sequences $u,v$ have the same
address.  Write \(u=w\alpha\) and \(v=w\beta\), where \(w\) is
their maximal common prefix. The prefix \(w\) acts on interval addresses
by an injective affine inverse branch \(F_w\), thus
\(\pi_{-2}(u)=\pi_{-2}(v)\) implies
\(\pi_{-2}(\alpha)=\pi_{-2}(\beta)\).  The two remaining sequences have the form
$1u'$ and $2v'$. The displayed identities then force
$\pi_{-2}(u')=0$ and $\pi_{-2}(v')=1$. Hence
$
\{u,v\}=\{w\ell_0,wr_0\}.
$
Since $\ell_0$ is the largest sequence beginning with $1$ and $r_0$ is
the smallest sequence beginning with $2$, we have
$
k_-\preceq\ell_0\prec r_0\preceq k_+.
$
When $A<B$, at least one of $\ell_0,r_0$ lies in $(k_-,k_+)$.
The shift-invariance of $\Omega(k_-,k_+)$ therefore prevents both
$w\ell_0$ and $wr_0$ from belonging to it. This proves injectivity,
and strict order preservation follows from monotonicity.
\end{proof}

If $(k_-,k_+)=(\ell_0,r_0)$, take $f_{(k_-,k_+)}=T_{-2}$; its kneading
pair is exactly $(\ell_0,r_0)$ and its critical values are $0$ and $1$.
For the collapse construction that follows, assume $A<B$, and regard
$A,B\in(0,1)$ also as circle points. The following proposition and its
proof are illustrated in Figure~\ref{fig2}.

\begin{proposition}\label{prop:nested}
Let \(J_0=(A,B)\) be the open arc not containing \(0\), where
\(A=[k_-]\) and \(B=[k_+]\), and suppose that \(k_-\) and \(k_+\)
satisfy the decreasing-admissibility condition. Let \(J_a\) and \(J_b\) be
connected components of \(m_{-2}^{-a}(J_0)\) and
\(m_{-2}^{-b}(J_0)\), respectively. Then either one of \(J_a,J_b\)
is contained in the other, or their closures are disjoint.
\end{proposition}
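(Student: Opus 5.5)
The plan is to reduce to the case where one of the two arcs is $J_0$ itself, and then rule out every bad configuration using the gap-avoidance and parity conditions~\eqref{eq:adm-even-odd}. I will work on the circle with arc lengths, transferring statements about points to statements about sequences via the address map $[\cdot]$. Lemma~\ref{lem:address-injectivity} makes this transfer legitimate on $\Omega(k_-,k_+)$: the address map is injective and strictly order-preserving there. If $(k_-,k_+)=(\ell_0,r_0)$ there is nothing to prove, so assume $A<B$.

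\textbf{Reduction.} The arc $J_0$ has length $B-A<1$. Since $m_{-2}^a$ is a covering of degree $(-2)^a$, each component $J_a$ of $m_{-2}^{-a}(J_0)$ is mapped by $m_{-2}^a$ homeomorphically onto $J_0$, and $J_a$ has length $|J_0|/2^a$.

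Assume without loss of generality that $a\le b$, and suppose the conclusion fails. Then $J_a\cup J_b$ is connected while neither arc contains the other. Set $n=b-a$ and $J'=m_{-2}^a(J_b)$.

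The restriction of $m_{-2}^a$ to $\overline{J_a}\cup\overline{J_b}$ is a homeomorphism onto its image. This needs a length argument: that union is an arc of length less than $2|J_0|/2^a$, so I must check it is short enough to avoid self-overlap. Failing that, I will argue directly with the lift of $m_{-2}^a$ restricted to this connected set. Granting injectivity, $J'$ is a component of $m_{-2}^{-n}(J_0)$, and the pair $(J_0,J')$ is again a bad configuration. If $n=0$, distinct components are disjoint and the only possible issue is a shared endpoint; this is handled by the shared-endpoint case below with $n=0$, where it reduces to $k_-\ne k_+$.

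Since $|J'|=|J_0|/2^n<|J_0|$, the arc $J'$ cannot contain $J_0$. So, in the bad configuration, an endpoint of $J_0$ lies in $\overline{J'}$ but $J'\not\subset J_0$. This leaves two cases.

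\textbf{Case 1: an endpoint of $J_0$, say $A$, lies in the open arc $J'$.} Then $m_{-2}^n(A)\in J_0$, so $[\sigma^n(k_-)]\in(A,B)$. By the monotonicity of $\pi_{-2}$ in the alternating order, this gives $\sigma^n(k_-)\in(k_-,k_+)$, contradicting the gap-avoidance part of~\eqref{eq:adm-even-odd}. The endpoint $B$ is handled the same way.

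\textbf{Case 2: $J'$ and $J_0$ share an endpoint, say $A$, and $J'$ lies on the side of $A$ opposite to $J_0$.} (If $J'$ lies on the same side, shortness forces $J'\subset J_0$.) Now $m_{-2}^n$ maps $J'$ onto $J_0$ and sends $A$ to an endpoint of $J_0$. Orientation decides which one. For $n$ even, $m_{-2}^n$ preserves orientation, so the exterior side of $A$ must go to the exterior side of $B$; this means $m_{-2}^n(A)=B$, that is, $\sigma^n(k_-)=k_+$, excluded for even $n$. For $n$ odd, orientation is reversed and $m_{-2}^n(A)=A$, that is, $\sigma^n(k_-)=k_-$, excluded for odd $n$. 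The shared endpoint $B$ gives the symmetric exclusions $\sigma^n(k_+)=k_-$ ($n$ even) and $\sigma^n(k_+)=k_+$ ($n$ odd). These four exclusions are exactly the endpoint parts of the parity conditions in~\eqref{eq:adm-even-odd}.

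\textbf{Main obstacle.} The delicate step is converting these circle statements into equalities of one-sided sequences when $A$ or $B$ is precritical. The orbit of $A$ may hit $1/2$, so its forward image has two itineraries, $w\ell_0$ and $wr_0$. I will handle this with the one-sided shift identity~\eqref{eq:one-sided-shift}. Following the side of $A$ on which $J'$ lies, I will track which one-sided itinerary of $m_{-2}^n(A)$ equals $\sigma^n(k_-)$. The side reverses at each iterate, which is exactly what produces the parity split used in Case 2. I will also need to check that no precritical ambiguity can create an equality of addresses without an equality of sequences; this is where the injectivity in Lemma~\ref{lem:address-injectivity} is invoked.
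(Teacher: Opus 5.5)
Your proposal is correct and is essentially the paper's proof. After applying $m_{-2}^a$, an overlap forces an endpoint of $J_0$ into the interior of a component of $m_{-2}^{-(b-a)}(J_0)$, contradicting gap avoidance; a touching endpoint is ruled out by the orientation/parity count together with Lemma~\ref{lem:address-injectivity}. The injectivity you leave open does hold, since $\sigma(k_-)\prec\sigma(k_+)$ gives $1-2A<2-2B$, i.e.\ $B-A<1/2$ (in fact only the homeomorphisms $m_{-2}^a|_{J_a}$ and $m_{-2}^b|_{J_b}$ are needed), and your precritical worry is moot because $m_{-2}^n([\omega])=[\sigma^n(\omega)]$ holds for every sequence, so address injectivity on $\Omega(k_-,k_+)$ suffices.
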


\begin{proof}
The decreasing-admissibility conditions give
\[
m_{-2}^{n}(A),\,m_{-2}^{n}(B)\notin J_0
\qquad\text{for every }n\geq0.
\]

Suppose that \(J_a\) and \(J_b\) intersect and that neither is
contained in the other. Without loss of generality, assume that
\(a<b\). The map \(m_{-2}^{a}\) sends \(J_a\) homeomorphically
onto \(J_0\), while it sends \(J_b\) onto a connected component of
\(m_{-2}^{-(b-a)}(J_0)\). Since \(J_a\) and \(J_b\) overlap but
neither contains the other, the open arc \(m_{-2}^{a}(J_b)\)
meets \(J_0\) but is not contained in \(J_0\). It must therefore
contain either \(A\) or \(B\) in its interior. Since
\(m_{-2}^{b-a}\) maps \(m_{-2}^{a}(J_b)\) homeomorphically onto
\(J_0\), the remaining \(b-a\) iterates send this endpoint into
the interior of \(J_0\), contradicting the conditions above.
Hence, whenever \(J_a\) and \(J_b\) intersect, one is contained
in the other.

Now suppose that \(J_a\cap J_b=\varnothing\). Since all the
components under consideration avoid \(0\), we may cut the circle
at \(0\) and regard them as open intervals in an ordinary interval.
Consequently, if their closures meet, then they meet at exactly
one point:
\[
\overline{J_a}\cap\overline{J_b}=\{x\}.
\]
If \(a=b\), two distinct components of \(m_{-2}^{-a}(J_0)\)
cannot share an endpoint, since that endpoint would have to be
mapped simultaneously to \(A\) and \(B\). Hence, after interchanging
\(J_a\) and \(J_b\) if necessary, we may assume that \(a<b\).

Since \(x\) is an endpoint of both components, set
\[
E:=m_{-2}^{a}(x)\in\{A,B\},
\qquad
F:=m_{-2}^{b}(x)\in\{A,B\}.
\]
Then
$
F=m_{-2}^{b-a}(E).
$
 Every shift of $k_-$ and $k_+$ belongs to
$\Omega(k_-,k_+)$. Since $A,B\in(0,1)$, a circle return to either
endpoint is also an equality of interval addresses.
Lemma~\ref{lem:address-injectivity} converts it into the corresponding
equality of symbolic sequences.
The two intervals approach the common endpoint $x$ from opposite sides.
Consequently, the iterate $m_{-2}^{b-a}$ preserves this endpoint pairing when
$b-a$ is even and reverses it when $b-a$ is odd: in the first case the two
endpoint images are the opposite endpoints of $J_0$, so $E\ne F$, whereas in
the second case they are the same endpoint, so $E=F$.
If \(a\) and \(b\) have the same parity, then \(b-a\) is even and
\(E\neq F\). Hence \(m_{-2}^{b-a}\) maps one endpoint of \(J_0\)
to the other, which is forbidden by the even part of
the decreasing-admissibility condition. If \(a\) and \(b\) have opposite parity, then
\(b-a\) is odd and \(E=F\). Hence \(m_{-2}^{b-a}\) fixes an
endpoint of \(J_0\), which is forbidden by the odd part of
the decreasing-admissibility condition. Both cases are impossible, so the closures are
disjoint.
\end{proof}

In the remainder of this subsection, we assume that
\((k_-,k_+)\) is decreasing-admissible and \(A<B\);
the full pair \((\ell_0,r_0)\), for which \(A=B=1/2\), was handled
separately above.
Let \(\mathcal J\) denote the family of all connected components of
the preimages of \(J_0\):
\[
\mathcal J
:=
\left\{
J:\ J\text{ is a connected component of }m_{-2}^{-n}(J_0)
\text{ for some }n\geq0
\right\}.
\]
By Proposition~\ref{prop:nested}, the closures of any two
members of \(\mathcal J\) are either nested or disjoint. Let
\(\mathcal M\) be the family of maximal elements of
$
\left\{\overline J:J\in\mathcal J\right\}$
with respect to inclusion. The members of \(\mathcal M\) are pairwise
disjoint compact arcs, and every \(J\in\mathcal J\) is contained in a
unique member of \(\mathcal M\). Moreover, every component of
\(m_{-2}^{-n}(J_0)\) has length
$
2^{-n}\operatorname{length}(J_0),
$
so the diameters of the preimage components tend to zero as
\(n\to\infty\).

By Proposition~\ref{prop:nested}, the absence of endpoint touching,
and the fact that the diameters of the preimage components tend to zero,
the maximal collapsed arcs form an upper semicontinuous monotone
decomposition of \(S^1\). Collapsing every arc in \(\mathcal M\) to a
single point and leaving all remaining points unchanged therefore gives
an upper semicontinuous monotone quotient. Hence the resulting quotient
space, denoted by \(X\), is homeomorphic to \(S^1\).
 This is the  Moore decomposition argument for the
circle; see Hubbard--Sparrow~\cite[Section~3]{Hubbard1990}.  Let
\[
q:S^1\longrightarrow X
\]
be the quotient map. Since \(\overline{J_0}\) is a maximal member of the decomposition, set
\[
c:=q(A)=q(B).
\]
Then
\[
q(\overline{J_0})=\{c\},
\qquad
q^{-1}(c)=\overline{J_0}.
\]

Finally, cut the quotient circle \(X\) at \(q(0)\), replacing \(q(0)\)
by two one-sided copies denoted by \(0\) and \(1\). Since
\(0\notin\overline{J_0}\), we have \(q(0)\neq c\). The resulting space
is a compact interval, denoted by \(\bar I\), and \(c\) is an interior
point of \(\bar I\). Away from the two copies of \(q(0)\), we continue
to use \(q\) for the corresponding quotient coordinates in \(\bar I\).

No collapsed arc contains $0$: its image at the defining preimage depth
would otherwise put the fixed point $0$ in $\overline{J_0}$.
Thus $q^{-1}(q(0))=\{0\}$. Cutting the source circle at $0$ as well gives
a continuous non-decreasing interval quotient
\[
Q:[0,1]\longrightarrow\bar I,\qquad Q(0)=0,\quad Q(1)=1,
\]
whose non-singleton fibres are precisely the arcs of $\mathcal M$,
viewed as closed subintervals of $(0,1)$; in particular,
$Q^{-1}(c)=[A,B]$. Define the candidate interval map away from $c$ by
\[
f_{(k_-,k_+)}(Q(x))=
\begin{cases}
Q(1-2x),&0\leq x<A,\\
Q(2-2x),&B<x\leq1.
\end{cases}
\]
Independence of the representative is checked in
Proposition~\ref{pro3.3}. The two relevant interval addresses are
\[
t_-:=\pi_{-2}(\sigma(k_-))=1-2A,
\qquad
t_+:=\pi_{-2}(\sigma(k_+))=2-2B.
\]

\begin{lemma}\label{lem:automatic-jump}
For every decreasing-admissible pair with $A<B$, the interval quotient
above satisfies
$
Q(t_-)<Q(t_+).
$
\end{lemma}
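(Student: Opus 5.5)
The plan is to combine monotonicity of the address map with the nested/disjoint structure of Proposition~\ref{prop:nested}. Since $Q$ is continuous and non-decreasing and its only non-singleton fibres are the arcs of $\mathcal M$, it suffices to show two things: first, $t_-<t_+$; second, $t_-$ and $t_+$ do not lie in a common member of $\mathcal M$.

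\emph{Strict inequality $t_-<t_+$.} Lemma~\ref{lem:upward-jump}, or directly Definition~\ref{def:decreasing-admissible}, gives $\sigma(k_-)\prec\sigma(k_+)$. Both sequences belong to $\Omega(k_-,k_+)$, because every shift of $k_\pm$ avoids $(k_-,k_+)$. Since $A<B$, Lemma~\ref{lem:address-injectivity} says that $\pi_{-2}$ is strictly order-preserving on $\Omega(k_-,k_+)$. Hence
\[
t_-=\pi_{-2}(\sigma(k_-))<\pi_{-2}(\sigma(k_+))=t_+ .
\]

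\emph{No common collapsed arc.} Suppose, for contradiction, that $t_-,t_+\in M$ for some $M\in\mathcal M$. Write $M=\overline J$ with $J\in\mathcal J$ a component of $m_{-2}^{-n}(J_0)$; then $M\supseteq[t_-,t_+]$, because $M$ is an arc avoiding $0$ and we may work in the interval picture. We pull this arc back one step through the left branch. The map $x\mapsto 1-2x$ sends $[B-\tfrac12,A]$ decreasingly onto $[t_-,t_+]$, since $1-2A=t_-$ and $1-2(B-\tfrac12)=2-2B=t_+$. Moreover $B-\tfrac12<A$, because $B-A<\tfrac12$ by the first symbols.

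Let $K$ be the component of $m_{-2}^{-(n+1)}(J_0)$ that is the left-branch lift of $J$ near $A$. By continuity its closure contains $[B-\tfrac12,A]$; in particular it contains $A\in\overline{J_0}$, together with points to the left of $A$. By Proposition~\ref{prop:nested}, the closures of $K$ and $J_0$ are not disjoint, so one of $K,J_0$ contains the other.

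The case $K\subseteq J_0$ is impossible, since $\overline K$ contains points left of $A$; a short endpoint check is needed here, using that $A$ is not an interior point of $K$. In the case $J_0\subsetneq K$, the image $m_{-2}(K)$ would contain $m_{-2}(J_0)$. Now $m_{-2}(J_0)$ is the arc running from $t_+$ through $0$ to $t_-$, so it contains the fixed point $0$. However, every member of $\mathcal J$ avoids $0$; this was noted before defining $Q$, since otherwise $0$ would be carried into $\overline{J_0}$. This is a contradiction.

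Hence $t_-$ and $t_+$ lie in different fibres of $Q$, and monotonicity gives $Q(t_-)<Q(t_+)$.

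The main obstacle is the pullback step. One has to make precise which component $K$ is meant, namely the one adjacent to $A$ on the left branch, and verify that its closure really contains $A$ rather than merely approaching it. If this is awkward, a fallback is to argue directly with sequences. The point $A$ has the two one-sided codings, $k_-$ from the left and one from the right. Pulling back $[t_-,t_+]$ then produces elements of $\Omega(k_-,k_+)$ whose addresses fill an interval. That contradicts the injectivity in Lemma~\ref{lem:address-injectivity}, unless $[t_-,t_+]$ is not swallowed by a single collapsed arc.
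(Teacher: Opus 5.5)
Your proof is correct, but it takes a different route from the paper's.

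\textbf{The paper's route.} The paper first observes that the forward orbits of $t_\pm$ avoid $J_0$. Hence $t_\pm\notin J$, so $t_-$ and $t_+$ must be the left and right endpoints of $J$. It then pushes forward by $m_{-2}^n$, which sends these endpoints to $A,B$ when $n$ is even and to $B,A$ when $n$ is odd. Lemma~\ref{lem:address-injectivity} converts this into $\sigma^{n+1}(k_-)=k_-$, $\sigma^{n+1}(k_+)=k_+$ (with $n+1$ odd), or into $\sigma^{n+1}(k_-)=k_+$, $\sigma^{n+1}(k_+)=k_-$ (with $n+1$ even). Either way one part of \eqref{eq:adm-even-odd} is violated.

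\textbf{Your route.} You pull back one step along the left branch to a depth-$(n+1)$ component $K$ with $[B-\tfrac12,A]\subseteq\overline K$. You then let Proposition~\ref{prop:nested} exclude both possible configurations: $A$ interior to $K$ (overlap without nesting) and $A$ an endpoint of $K$ (end-to-end touching). The parity bookkeeping is thereby delegated to the touching case of Proposition~\ref{prop:nested} at depths $0$ and $n+1$, which is essentially the paper's computation. Your version never needs to identify $t_\pm$ as endpoints of $J$ or to use their orbit avoidance. The paper's version is self-contained and shows exactly which admissibility condition fails.

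\textbf{Small repairs.}
\begin{itemize}
\item The inequality $B-\tfrac12<A$ does not follow from the first symbols: $A\in(0,\tfrac12]$ and $B\in[\tfrac12,1)$ allow $B-A$ close to $1$. It is equivalent to $1-2A<2-2B$, that is, to $t_-<t_+$, which you have already proved.
\item In the case $K\subseteq J_0$ no endpoint check is needed. Indeed $\overline K\subseteq[A,B]$, which cannot contain $B-\tfrac12<A$.
\item In the case $J_0\subseteq K$, your claim $0\in m_{-2}(J_0)$ requires $\tfrac12\in J_0$. This fails when $A=\tfrac12$ or $B=\tfrac12$, which is allowed when exactly one of $k_-=\ell_0$, $k_+=r_0$ holds. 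Those cases are vacuous, since then $t_-=0$ or $t_+=1$ lies in a singleton fibre. It is simpler to note that $K$ has length $2^{-(n+1)}$ times that of $J_0$, which rules out $J_0\subseteq K$ immediately.
\item The sequence-based fallback in your last paragraph is not needed.
\end{itemize}
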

\begin{proof}
Lemma~\ref{lem:address-injectivity} and
$\sigma(k_-)\prec\sigma(k_+)$ give $t_-<t_+$. Their circle orbits
avoid $J_0$, since all shifts of the boundary sequences avoid
$(k_-,k_+)$. If $Q(t_-)=Q(t_+)$, the two points belong to a common
non-singleton fibre $\overline J\in\mathcal M$, where $J$ is a
component of $m_{-2}^{-n}(J_0)$. Neither lies in $J$, so they are its
left and right endpoints.

For even $n$, their images under $m_{-2}^n$ are $A,B$, respectively.
Since $A,B\in(0,1)$, these are interval-address equalities;
Lemma~\ref{lem:address-injectivity} gives
$\sigma^{n+1}(k_-)=k_-$ and $\sigma^{n+1}(k_+)=k_+$,
contradicting the odd part of~\eqref{eq:adm-even-odd}.
For odd $n$, the endpoint images are reversed, giving
$\sigma^{n+1}(k_-)=k_+$ and $\sigma^{n+1}(k_+)=k_-$,
contrary to its even part. Thus monotonicity of $Q$ gives
$Q(t_-)<Q(t_+)$.
\end{proof}

For a nonempty $J_0$, the circle images $q(m_{-2}(A))$ and
$q(m_{-2}(B))$ are also distinct. Otherwise the strict interval
inequality would identify them only across the cut, forcing
$Q(t_-)=0$, $Q(t_+)=1$. The singleton endpoint fibres then give
$t_-=0$, $t_+=1$, hence $A=B=1/2$, a contradiction.
For the full pair $(\ell_0,r_0)$, however, the two circle images coincide
at $0$, while the interval critical values are $0$ and $1$.
Thus separation must be interpreted in interval coordinates when this
case is included; see Remark~\ref{re1}.

\begin{proposition}\label{pro3.3}
Let $(k_-,k_+)$ be decreasing-admissible with \(A<B\).
The map induced by \(m_{-2}\),
\[
f_{(k_-,k_+)}:\bar I\longrightarrow\bar I,
\]
has the following properties:
\begin{enumerate}
\item[(i)]
The map \(f_{(k_-,k_+)}\) is continuous and strictly decreasing on
 \([0,c)\) and \((c,1]\).

\item[(ii)]
$
f_{(k_-,k_+)}(0)=1,
\
f_{(k_-,k_+)}(1)=0.
$

\item[(iii)]
The two one-sided limits at \(c\) exist and satisfy
\[
f_{(k_-,k_+)}(c-)=Q(t_-),
\qquad
f_{(k_-,k_+)}(c+)=Q(t_+).
\]
In particular,
$
f_{(k_-,k_+)}(c-)
<
f_{(k_-,k_+)}(c+).$
Moreover, the lower and upper kneading sequences of \(c\) are
\(k_-\) and \(k_+\), respectively.
\end{enumerate}
\end{proposition}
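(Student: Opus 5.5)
The proof reduces everything to one fact: the two circle branches of \(m_{-2}\) commute with the quotient \(Q\). The order of steps is:
\begin{enumerate}
\item well-definedness of \(f_{(k_-,k_+)}\);
\item parts (i) and (ii);
\item part (iii);
\item identification of the kneading sequences.
\end{enumerate}

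\emph{Well-definedness.} Let \(x<x'\) lie in \([0,A)\cup(B,1]\) with \(Q(x)=Q(x')\). Since \(Q^{-1}(c)=[A,B]\), both points lie in the same side \([0,A)\) or \((B,1]\), and they lie in a common maximal arc \(M\in\mathcal M\), so \(M\subset[0,A)\) or \(M\subset(B,1]\). Write \(M=\overline J\) with \(J\) a component of \(m_{-2}^{-n}(J_0)\). Since \(J\neq J_0\), we have \(n\geq1\). Then \(m_{-2}\) maps \(J\) homeomorphically onto a component of \(m_{-2}^{-(n-1)}(J_0)\); it is injective because \(\overline J\) avoids the other preimage \(1/2\) of \(0\), i.e.\ one branch applies. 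So the image of \(M\) is contained in some member of \(\mathcal M\), and \(Q(1-2x)=Q(1-2x')\), and likewise for the branch \(2-2x\).

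One point needs care. \(M\) is maximal, but is \(m_{-2}(M)\) contained in a single maximal fibre? Yes: it is the closure of a member of \(\mathcal J\), hence contained in a unique member of \(\mathcal M\). Maximal arcs may map into strictly larger arcs; that is fine, and only containment is needed. The same argument shows monotone collapsing. For \(x\in[0,A)\) the map \(1-2x\) is strictly decreasing, so \(f\) is non-increasing on \(Q([0,A))=[0,c)\).

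\emph{Parts (i) and (ii).} Continuity on \([0,c)\) and \((c,1]\) holds because \(Q\) is a continuous closed surjection, so the induced map is continuous by the quotient property on these saturated half-open pieces. For strict monotonicity, suppose \(f(Q(x))=f(Q(x'))\) with \(Q(x)\neq Q(x')\) on the same side. Then \(m_{-2}(x)\) and \(m_{-2}(x')\) lie in a common \(\overline{J'}\in\mathcal M\). Pulling back along the branch, the component of \(m_{-2}^{-1}(J')\) between \(x\) and \(x'\) is an element of \(\mathcal J\) (at one higher depth) whose closure contains both points. Hence they lie in one fibre, a contradiction. The pullback requires the whole arc \([x,x']\) to lie in one branch domain, which holds since \(x,x'<A\leq 1/2\), or symmetrically \(x,x'>B\geq1/2\). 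Part (ii) is immediate: \(Q(0)=0\), \(Q(1)=1\), \(m_{-2}\) maps \(0\mapsto1\) and \(1\mapsto0\), and the endpoint fibres are singletons.

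\emph{Part (iii) and the kneading sequences.} The one-sided limits follow from continuity of \(Q\): as \(x\uparrow A\), \(1-2x\to t_-\), so \(f(c-)=Q(t_-)\); similarly \(f(c+)=Q(t_+)\). The strict inequality is Lemma~\ref{lem:automatic-jump}. For the kneading sequences, points \(Q(x)\) with \(x\) slightly less than \(A\) and outside every collapsed arc have itinerary under \(f\) equal to the \(m_{-2}\)-itinerary of \(x\) relative to \(J_0\). The first symbol is \(1\) because the point is left of \(c\). Subsequent symbols are determined by the side of \(m_{-2}^n(x)\) relative to \(J_0\), and \(Q\) preserves order, so they agree with the partition at \(1/2\). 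As \(x\uparrow A\), these itineraries converge to \(\tau_{-2}(A-)\). This equals \(k_-\) by Lemma~\ref{lem:address-injectivity}, since \(k_-\in\Omega(k_-,k_+)\) is the unique sequence in that space with address \(A\), and it is the lower one because \(k_-\preceq\ell_0\). The argument for \(k_+\) is symmetric.

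\emph{Main obstacle.} The hard step is this last identification at precritical points of the quotient map. One needs that the limit from the left of the \(f\)-itineraries is \(k_-\) and not the other one-sided sequence. This should follow from orientation reversal being tracked by the alternating order, as in \eqref{eq:one-sided-shift}.
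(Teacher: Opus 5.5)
Your proof follows the paper's route step by step. Well-definedness holds because the image of a depth-$n$ collapsed arc is the closure of a depth-$(n-1)$ component and so lies in one fibre. Strict monotonicity comes from pulling a non-singleton target fibre back along the single branch containing $[x,x']$; you should also dispose of the singleton-fibre case, where injectivity of the branch gives $x=x'$ at once. The endpoint values and the critical limits then follow from continuity of $Q$ and Lemma~\ref{lem:automatic-jump}. For the kneading pair the paper only asserts that the exterior one-sided itinerary of $A$ is $k_-$ and that the quotient preserves it, so your limit argument is already the more explicit of the two.

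The step you flag as the main obstacle is not settled by the reason you give. Every sequence beginning with $1$ is $\preceq\ell_0$, so ``$k_-\preceq\ell_0$'' cannot choose between $w\ell_0$ and $wr_0$ when $A$ is $T_{-2}$-precritical. The missing observation is that the limit lies in $\Omega(k_-,k_+)$, and your own setup supplies it:
\begin{itemize}
\item Let $x<A$ lie in no member of $\mathcal M$. Then $m_{-2}^n(x)\notin[A,B]$ for all $n$, since otherwise $x$ would lie in the closure of a component of $m_{-2}^{-n}(J_0)$.
\item Hence $x\notin C_{-2}$, $Q(x)$ is not $f$-precritical, and its $f$-itinerary is $\tau_{-2}(x)$.
\item Monotonicity of $\pi_{-2}$ gives $\sigma^n\tau_{-2}(x)\notin(k_-,k_+)$ for all $n$, i.e.\ $\tau_{-2}(x)\in\Omega(k_-,k_+)$.
\item Every non-precritical point of $[0,c)$ is such a $Q(x)$, because images of collapsed arcs are precritical. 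These points accumulate at $c$ from the left, because $Q([A-\varepsilon,A))=[Q(A-\varepsilon),c)$ is a nondegenerate interval and only countably many of its points come from collapsed arcs.
\end{itemize}
Since $\Omega(k_-,k_+)$ is closed, $\tau_f(c-)$ lies in it and has address $A$, so Lemma~\ref{lem:address-injectivity} gives $\tau_f(c-)=k_-$. The case of $k_+$ is symmetric.

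Alternatively, you can check $k_-=\tau_{-2}(A-)$ directly. If $k_-=wr_0$, then $\sigma^{|w|}(k_-)=r_0$ must avoid $(k_-,k_+)$, which forces $r_0=k_+$. The even part of \eqref{eq:adm-even-odd} then makes $|w|$ odd, so $wr_0\prec w\ell_0$, and $k_-$ is the lower sequence. The case $k_-=w\ell_0$ with $|w|\geq1$ uses the odd part in the same way. This is where parity admissibility actually enters. In the limit argument it is hidden in $Q^{-1}(c)=[A,B]$, that is, in Proposition~\ref{prop:nested}.
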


\begin{proof}
We first verify that \(f_{(k_-,k_+)}\) is well defined away from \(c\).
Suppose that
\[
q(\theta)=q(\theta')
\]
for two points
\(\theta,\theta'\in S^1\setminus\overline{J_0}\). If
\(\theta\neq\theta'\), then they belong to the same maximal collapsed
arc \(\overline J\), where \(J\) is a connected component of
\(m_{-2}^{-n}(J_0)\) for some \(n\geq1\). The image
\(m_{-2}(\overline J)\) is contained in the closure of a connected
component of \(m_{-2}^{-(n-1)}(J_0)\), and hence is contained in a
single collapsed arc. Therefore
$
q\bigl(m_{-2}(\theta)\bigr)
=
q\bigl(m_{-2}(\theta')\bigr).
$
Thus the definition is independent of the chosen representative.
 After identifying the outer interval endpoints, this is the
circle identity
\[
f_{(k_-,k_+)}\circ q
=
q\circ m_{-2}
\qquad
\text{on }S^1\setminus\overline{J_0}.
\]
It follows from the quotient property that \(f_{(k_-,k_+)}\) is
continuous away from \(c\).

\noindent\textup{(i)}\quad On each of the two exterior arcs adjacent to \(A\) and \(B\), the map
\(m_{-2}\) reverses orientation, while \(q\) preserves the induced
order. Hence \(f_{(k_-,k_+)}\) is non-increasing on both
\([0,c)\) and \((c,1]\).

Suppose that \(x<y\) lie on the same side of \(c\) and have the same
image. Choose lifts \(\theta_x,\theta_y\) such that the arc \(K\) between
them does not meet \(\overline{J_0}\). If the common image belongs to
a singleton fibre of \(q\), then
$
m_{-2}(\theta_x)=m_{-2}(\theta_y).
$
Since \(K\) lies in one inverse branch of \(m_{-2}\), this gives
\(\theta_x=\theta_y\), hence \(x=y\).

Otherwise, \(q\circ m_{-2}\) is monotone on \(K\) and has equal endpoint
values. Hence \(q(m_{-2}(K))\) is a singleton, so
\(m_{-2}(K)\subset M\) for some non-singleton fibre
\(M\in\mathcal M\). Therefore
\(K\subset m_{-2}^{-1}(M)\); since \(K\) lies in one inverse branch,
it is contained in a connected component of \(m_{-2}^{-1}(M)\), and this
component is contained in a maximal collapsed interval. Thus \(q(K)\) is
a singleton, contradicting \(x<y\). Therefore
\(f_{(k_-,k_+)}\) is strictly decreasing on both branches.

\par\medskip
\noindent\textup{(ii)}\quad The points \(0\) and \(1\) are the two one-sided copies of
\(q(0)\). Since \(m_{-2}\) reverses orientation, the endpoint values are
\[
f_{(k_-,k_+)}(0)=1,\qquad f_{(k_-,k_+)}(1)=0.
\]

\par\medskip
\noindent\textup{(iii)}\quad  By the defining branch formulas and continuity of \(Q\),
\[
f_{(k_-,k_+)}(c-)=Q(1-2A)=Q(t_-),
\qquad
f_{(k_-,k_+)}(c+)=Q(2-2B)=Q(t_+).
\]
The strict inequality \(Q(t_-)<Q(t_+)\) is exactly
Lemma~\ref{lem:automatic-jump}.

Finally, decreasing-admissibility implies that the forward orbits of
\(A\) and \(B\) do not enter \(J_0\). Under the negative base
\(-2\) address coding, the exterior one-sided itinerary of \(A\) is
\(k_-\), while that of \(B\) is \(k_+\). The quotient construction
preserves these one-sided itineraries. Hence the lower and upper
 kneading sequences of \(c\) are \(k_-\) and \(k_+\), respectively.
\end{proof}

It remains to prove topological expansivity of $f_{(k_-,k_+)}$.

\begin{figure}[t]
\begin{center}
\includegraphics[width=0.5\textwidth]{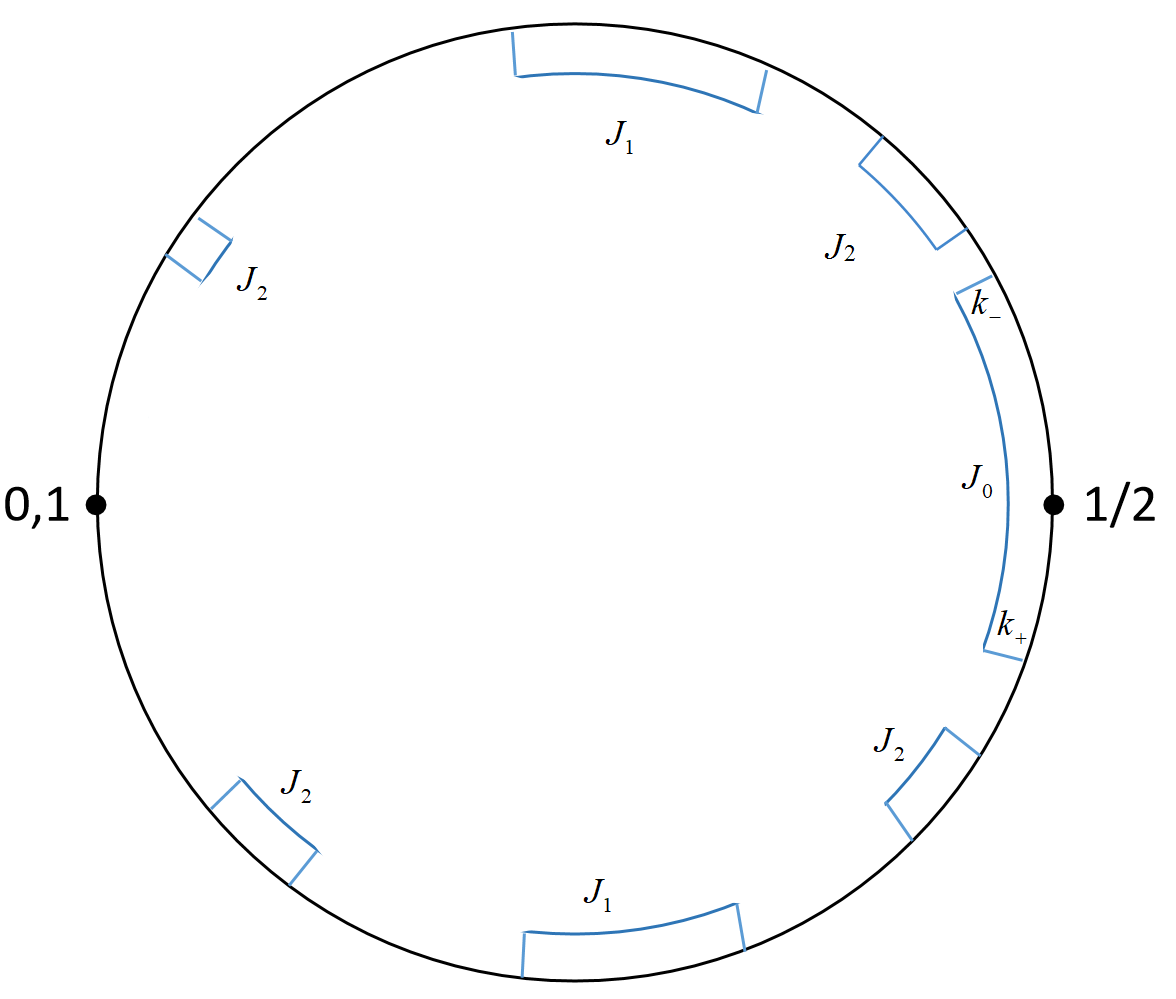}
\caption{The closures of the preimage components of $J_0=(A,B)$
under $m_{-2}$ are either nested or disjoint.
Collapsing the maximal such closed arcs gives the quotient circle $X$,
with $q(\overline{J_0})=\{c\}$.
Cutting $X$ at $q(0)$ gives the interval supporting the decreasing
Lorenz map with kneading invariants $(k_-,k_+)$.}
\label{fig2}
\end{center}
\end{figure}

\begin{proposition}\label{prop:expansive}
The map $f_{(k_-,k_+)}:\bar I\to\bar I$  constructed from any
decreasing-admissible pair, including the full pair $(\ell_0,r_0)$,
is topologically expansive.
\end{proposition}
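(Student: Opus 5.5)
To prove the proposition, I will show that the precritical set $C(f)=\bigcup_{n\ge0}f^{-n}(c)$ of $f:=f_{(k_-,k_+)}$ is dense in $\bar I$. By the definition of $\mathrm{EDLM}$, this density is exactly topological expansivity.

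\emph{The full pair.} For $(\ell_0,r_0)$ we have $f=T_{-2}$. Here $C(f)=C_{-2}$ consists of all preimages of $1/2$ under a map with slope $\pm2$. Every interval of length $\delta$ is mapped injectively, before it first meets $1/2$, onto an interval of length $2^n\delta$. So it must contain a precritical point once $2^n\delta>1$, and density follows directly.

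\emph{The case $A<B$.} I will work through the semiconjugacy $f\circ Q=Q\circ T$ established in Proposition~\ref{pro3.3}. Here $T$ is the interval version of $m_{-2}$ on $[0,1]\setminus[A,B]$.

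\emph{Step 1: locating the preimages of $c$.} I first check that $Q^{-1}(C(f))$ contains every point of the circle that eventually lands in $\overline{J_0}$. For a point $\theta$ whose $m_{-2}$-orbit first enters $\overline{J_0}$ at time $n$, the orbit avoids $\overline{J_0}$ at times $0,\dots,n-1$. The identity $f\circ q=q\circ m_{-2}$ on $S^1\setminus\overline{J_0}$ then gives $f^n(q(\theta))=q(m_{-2}^n\theta)=c$, and earlier visits to $c$ are excluded. In particular, $Q$ of every member of $\mathcal M$ is a point of $C(f)$, because each maximal arc is the closure of a component of some $m_{-2}^{-n}(J_0)$ whose image first lands in $\overline{J_0}$. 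The endpoint convention needs a little care here.

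\emph{Step 2: density.} Take a nondegenerate open interval $U\subset\bar I$. Its preimage $Q^{-1}(U)$ is an open set. Since $Q$ is monotone and surjective and $U$ is not a single point, $Q^{-1}(U)$ contains a nondegenerate open interval $V\subset(0,1)$. Because $m_{-2}$ expands arcs by a factor of $2$, some iterate $m_{-2}^N(V)$ either covers the whole circle or has length greater than $1$. In that case it meets $J_0$. Let $n$ be the first such time; then $V$ contains a point $\theta$ whose orbit first enters $J_0$ at time $n$. By Step 1, $Q(\theta)\in C(f)$. Since $\theta\in V\subset Q^{-1}(U)$, we get $Q(\theta)\in U$.

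\emph{Main obstacle.} The delicate part of the expansion argument is that $V$ might sit inside a collapsed arc. This cannot happen, because $V\subset Q^{-1}(U)$ and $Q(V)$ is then nondegenerate. Still, an image $m_{-2}^k(V)$ may be cut at the point $0$, since $m_{-2}$ is a circle map. I will handle this by arguing on the circle, where length doubles until the arc wraps. That guarantees a hit on the open arc $J_0$ itself, not merely on its endpoints.

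\emph{Why this gives expansivity.} Step 1 requires the first-entry time to be the first visit of $c$ under $f$. This is automatic, since the orbit points before time $n$ lie outside $\overline{J_0}=Q^{-1}(c)$. Density of $C(f)$ then gives condition (iii) of Definition~\ref{defofdecreasing}, so $f$ is topologically expansive.
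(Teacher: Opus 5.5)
Your proof is correct and follows essentially the paper's argument: you pull a nonempty open set back through the monotone quotient, use the expansion of $m_{-2}$ to reach $J_0$ at a first time $n$, and push the first-arrival point forward with the semiconjugacy $f\circ q=q\circ m_{-2}$ off $\overline{J_0}$ to get a preimage of $c$ in $U$. The paper instead selects a whole maximal collapsed arc $M\subset q^{-1}(U_0)$ with its first-arrival depth, whereas you pick a single point $\theta\in V$; your ``main obstacle'' is not actually an obstacle, since the argument works even if $V$ lies inside a collapsed arc (and nondegeneracy of $Q(V)$ does not follow from $V\subset Q^{-1}(U)$ alone, though nothing depends on it), and avoidance of $\overline{J_0}$ rather than just $J_0$ before time $n$ follows because each $m_{-2}^j(V)$ is open.
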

\begin{proof}
The full pair gives $T_{-2}$, whose preimages of $1/2$ are dense.
Assume $A<B$, and choose a nonempty open subinterval $U_0$ of any
nonempty open set $U\subset\bar I$, avoiding the cut point.
The saturated open set $q^{-1}(U_0)$ meets some preimage component
of $J_0$, because $m_{-2}$ is expanding. It therefore contains the
maximal collapsed arc $M$ containing that component, so $z=q(M)\in U_0$.
Let $r$ be its first-arrival depth: $M$ is the closure of a component
of $m_{-2}^{-r}(J_0)$ and $m_{-2}^j(M)\cap J_0=\varnothing$ for
$0\leq j<r$. The branch semiconjugacy gives
\[
f_{(k_-,k_+)}^r(z)=q(m_{-2}^r(M))=q(\overline{J_0})=c.
\]
Thus the preimages of $c$ meet every $U$, proving expansivity in
the sense of Definition~\ref{defofdecreasing}.
\end{proof}

\begin{remark}\label{re1}
 For the full pair $k_- = \ell_0=1(12)^\infty$ and
$k_+=r_0=2(21)^\infty$, both interval addresses equal $1/2$.
Thus $J_0$ is empty and no arcs are collapsed. Cutting the circle at $0$
turns $m_{-2}$ into $T_{-2}$, whose kneading space is the full shift.
Here $t_-=0$ and $t_+=1$, so the interval map has a strict upward jump
even though $m_{-2}(A)=m_{-2}(B)=0$ on the circle. This is an empty-hole
case, not a failure of separation of the interval critical values.
\end{remark}

\subsection{\texorpdfstring{Embedding a map $f\in\mathrm{EDLM}$ on the circle}{Embedding an EDLM on the circle}}\label{sec2.3}
We embed an EDLM into an expansive degree-$-2$ circle map by reversing
the collapse in Section~\ref{constr}.

\par
For \(k\geq0\), let
\[
C_k:=\{x\in[0,1]: f^j(x)\ne c\ (0\leq j<k),\ f^k(x)=c\},
\]
so \(C_0=\{c\}\).
We adapt the interval-insertion construction of
Hubbard--Sparrow~\cite[Section~4]{Hubbard1990} to the decreasing case.
If $(k_-,k_+)\ne(\ell_0,r_0)$, choose $1<\gamma<2$ such that
\[
L:=\sum_{k\geq0}\#C_k\gamma^{-k}<\infty;
\]
the existence of this choice is proved below. Replace each $x\in C_k$
by a closed interval $J_x$ of length $\gamma^{-k}$, using only the
cumulative inserted lengths as the new coordinate. Denote the ordered
completion by $\widetilde I=[0,L]$, its monotone collapse onto $[0,1]$
by $\pi$, and the interval replacing $c$ by $J_0=[e_-,e_+]$.
Thus $e_+-e_-=1$.
For the full pair $(\ell_0,r_0)$, instead put
$\widetilde I=[0,1]$, $\pi=\mathrm{id}$, and $J_0=\{c\}$,
with $e_-=e_+=c$; no intervals are inserted.

\begin{lemma} \label{lem:blow-up-completion}
Let $f\in\mathrm{EDLM}$. Identifying the endpoints of $\widetilde I$
gives a circle $\widetilde S^1$ carrying a continuous, positively
expansive, orientation-reversing covering $\widetilde f$ of degree $-2$.
In cut interval coordinates, its exterior branches $\widetilde f_-$
and $\widetilde f_+$ satisfy
\[
\pi\circ\widetilde f_\pm=f\circ\pi
\quad\text{on their respective domains outside }J_0,
\]
with endpoint values
\[
\pi\bigl(\widetilde f_-(e_-)\bigr)=f(c-),\qquad
\pi\bigl(\widetilde f_+(e_+)\bigr)=f(c+).
\]
In the nonfull case, for $x\in C_k$, $k\geq1$, the map sends $J_x$
affinely and orientation-reversingly onto $J_{f(x)}$.
\end{lemma}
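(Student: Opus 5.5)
We follow the Hubbard--Sparrow insertion scheme and split the argument into four steps: the choice of $\gamma$, the definition of $\widetilde f$ on exterior and inserted pieces, the covering property, and positive expansivity. The full pair $(\ell_0,r_0)$ is immediate: $\widetilde I=[0,1]$, and identifying $0\sim1$ turns $T_{-2}$ into $m_{-2}$, which is a positively expansive covering of degree $-2$. From now on assume the pair is nonfull.

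\emph{Step 1: convergence of $L$.} Each point of $C_k$ has at most two preimages under $f$, one in each branch. Hence $\#C_{k+1}\le 2\,\#C_k$ and $\#C_k\le 2^k$. This crude bound is not enough, since we need $\gamma<2$. Instead we use
\[
\limsup_k \tfrac1k\log\#C_k\le h_{\mathrm{top}}(f).
\]
This holds because distinct points of $C_k$ have distinct length-$k$ one-sided itineraries, by Theorem~\ref{thm2}: points with equal itineraries are separated by a precritical point of lower depth. We then need $h_{\mathrm{top}}(f)<\log2$. By Theorem~\ref{thm2}, $\Omega(f)$ omits the nonempty cylinder-open set $(k_-,k_+)$ together with all its shifts. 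Since $k_-\ne\ell_0$ or $k_+\ne r_0$, the gap contains a cylinder, so $\Omega(f)$ is a proper subshift of the full shift that misses a word. Its entropy is therefore $<\log 2$. Any $\gamma$ with $e^{h_{\mathrm{top}}(f)}<\gamma<2$ then gives $L<\infty$.

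\emph{Step 2: definition of $\widetilde f$.} On the complement of the inserted intervals, the coordinate is the cumulative-length function, and we define $\widetilde f_\pm$ by transporting $f$ through $\pi$. For $x\in C_k$ with $k\ge1$, we have $f(x)\in C_{k-1}$. We map $J_x$, which has length $\gamma^{-k}$, affinely and orientation-reversingly onto $J_{f(x)}$, which has length $\gamma^{-k+1}$. On this piece the slope is $-\gamma$. The exterior endpoints $e_\mp$ of $J_0$ go to $\pi^{-1}$ of the appropriate endpoint of the fibre over $f(c\mp)$. Continuity at points outside $\bigcup C_k$ follows from convergence of the length series: the inserted length in a small neighbourhood of such a point tends to $0$, as in Hubbard--Sparrow. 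The semiconjugacy $\pi\circ\widetilde f_\pm=f\circ\pi$ holds by construction.

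\emph{Step 3: covering of degree $-2$.} $\widetilde f_-$ maps $[0,e_-]$ monotonically decreasingly onto $[\pi^{-1}f(c-)_{\text{right}},L]$, and $\widetilde f_+$ maps $[e_+,L]$ onto $[0,\pi^{-1}f(c+)_{\text{left}}]$. The collapsed interval $J_0$ has length $1$. It is the only inserted interval not in the image, so it serves as the gap that exactly accounts for the upward jump $f(c-)<f(c+)$. Identifying $0\sim L$ and $e_-\sim e_+$ would be wrong; the correct procedure is to map $J_0$ itself across the seam. We make this precise by checking that the images of the two exterior arcs, together with the image of $J_0$, cover $\widetilde S^1$ exactly twice with reversed orientation. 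Extending over $J_0$ by any orientation-reversing homeomorphism onto the complementary arc $[\widetilde f_-(e_-),\widetilde f_+(e_+)]$ through the seam gives the degree count $-2$.

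\emph{Step 4: positive expansivity.} This is the main obstacle. We plan to show that every nondegenerate arc $K$ eventually has an image of definite size. There are two cases. If $K$ meets an inserted interval $J_x$ with $x\in C_k$, then forward iterates expand $J_x$ by the factor $\gamma$ until it becomes $J_0$, which has length $1$. If $K$ contains no inserted interval, then $\pi(K)$ is a nondegenerate interval. By density of $C(f)$ it contains a precritical point, and hence $K$ contains some $J_x$ after all. Thus some iterate of any arc contains $J_0$ or crosses it, and so has length bounded below. This yields an expansivity constant. The delicate point is to control the images of arcs straddling the seam or $J_0$, where the arc might split. We would handle this by taking $\delta<\tfrac12\min(|J_0|,\text{dist})$ and arguing on arcs of length less than $\delta$.
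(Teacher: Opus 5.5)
Your Step~1 is sound in substance; it is the paper's word-counting bound, phrased via $h_{\mathrm{top}}(f)<\log 2$. Steps~3--4, however, contain a genuine gap. In Step~3 you extend across $J_0$ by \emph{any} orientation-reversing homeomorphism onto the complementary arc $A$, and with that freedom positive expansivity can fail. The arc $A$ has length $2L-\gamma(L-1)>1$, so its interior contains some $J_y$ with $y\in C_j$, $j\ge1$, and $\widetilde f^{\,j}$ maps $J_y$ affinely onto $J_0$. Choose a small subarc $M\subset J_0$. If $j$ is odd, let the extension equal $(\widetilde f^{\,j}|_{J_y})^{-1}$ on $M$. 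If $j$ is even, let it equal $(\widetilde f^{\,j}|_{J_y})^{-1}\circ\rho$ on $M$, with $\rho$ the reflection of $M$. Then extend monotonically over the rest of $J_0$. Now $\widetilde f^{\,j+1}$, respectively $\widetilde f^{\,2j+2}$, is the identity on $M$, so $M$ is an arc of periodic points.

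This is exactly the case your Step~4 leaves out. You only handle arcs $K$ with $\pi(K)$ nondegenerate. An arc inside a single fibre $J_x$ arrives in $J_0$ with length $\gamma^k|K|$, which may still be tiny, and from then on it is controlled only by the unspecified extension. Incidentally, $J_0$ does lie in the image of the exterior branches, since $J_0=\widetilde f(J_x)$ for $x\in C_1$. Your degree count is right only because $A$ is the complement of the overlap of the two exterior images.

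The missing idea is that the exterior branches are in fact affine. The new coordinate is cumulative inserted length, so the complement of the fibre interiors has length zero. Each $J_x$ with $x\in C_k$, $k\ge1$, is stretched by exactly $\gamma$ onto $J_{f(x)}$. The fibres meeting $[u,v]$ in one branch correspond bijectively to those meeting its image. Hence $|\widetilde f(v)-\widetilde f(u)|=\gamma(v-u)$, and $f(0)=1$, $f(1)=0$ give $\widetilde f_-(t)=L-\gamma t$ and $\widetilde f_+(t)=\gamma(L-t)$. Interpolating \emph{affinely} across $J_0$ in the lift gives slope modulus $2L-\gamma(L-1)=\gamma+(2-\gamma)L>\gamma$. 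So every piece of the lift expands by at least $\gamma>1$, and positive expansivity is immediate.

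There are two smaller issues. First, in the full case you tacitly take $f=T_{-2}$, but $f$ is an arbitrary EDLM with kneading pair $(\ell_0,r_0)$. You still need $f(c-)=0$ and $f(c+)=1$, which follow from strict order preservation of itineraries. You also need positive expansivity without any metric expansion; the paper gets it by matching the precritical partitions, whose mesh tends to $0$ by density of $C(f)$, with those of $m_{-2}$. Second, invoking Theorem~\ref{thm2} in Step~1 is circular: its proof uses the uniqueness part of Theorem~\ref{thm1}, which rests on this lemma. The facts you actually need there are available independently. Injectivity of length-$k$ itineraries on $C_k$ follows from monotonicity of $f^k$ on an interval of constant length-$k$ itinerary. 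Gap avoidance is Proposition~\ref{prop:necessity}.
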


\begin{proof}
First suppose that $(k_-,k_+)\ne(\ell_0,r_0)$.
The proof of Proposition~\ref{prop:necessity} shows that every realized
itinerary and each of its shifts avoids $(k_-,k_+)$. In the nonfull
case this gap contains a cylinder determined by a finite word $v$,
say of length $p$. Each point of $C_k$ is determined by its length-$k$
branch itinerary, which avoids $v$. Writing $k=qp+r$, $0\leq r<p$,
we therefore have
\[
\#C_k\leq(2^p-1)^q2^r.
\]
Thus any $\max\{1,(2^p-1)^{1/p}\}<\gamma<2$ makes $L$ finite.
Since $C(f)$ is dense, the cumulative inserted lengths give the
ordered interval and continuous collapse described above; the
complement of the interiors of the inserted intervals has length zero.

For $x\in C_k$, $k\geq1$, map $J_x$ decreasingly and affinely onto
$J_{f(x)}$. These maps extend by order to the two exterior branches.
For an inserted target fibre $J_y$ lying in the image of one exterior
branch, the inverse branch gives a unique $x$ with $f(x)=y$; if
$y\in C_j$, then $x\in C_{j+1}$. Thus the inserted fibres meeting a
domain interval $[u,v]$ in that branch correspond bijectively, in
reverse order, to the inserted fibres meeting
$[\widetilde f(v),\widetilde f(u)]$. At $e_-$ use the right endpoint
of the fibre over $f(c-)$, and at $e_+$ the left endpoint of the
fibre over $f(c+)$; this includes partial endpoint fibres and gives
the prescribed one-sided values continuously.

The residual sets in the domain and image branches have length zero, so
the image interval has no additional positive-length part. Since each
fibre $J_x$ is mapped affinely onto $J_{f(x)}$, including its partial
endpoint pieces, for every $u<v$ on one exterior branch we have
\[
\begin{aligned}
\left|\widetilde f(v)-\widetilde f(u)\right|
&=\sum_{x\in C(f)\setminus\{c\}}
  \left|\widetilde f\bigl([u,v]\cap J_x\bigr)\right|\\
&=\gamma\sum_{x\in C(f)\setminus\{c\}}
  \left|[u,v]\cap J_x\right|
=\gamma(v-u).
\end{aligned}
\]
Consequently, both exterior branches are affine with slope
$-\gamma$. Since $f(0)=1$ and $f(1)=0$, they are
\[
\widetilde f_-(t)=L-\gamma t,\qquad
\widetilde f_+(t)=\gamma(L-t).
\]

After identifying $0$ and $L$, take the lift $F$ equal to
$\widetilde f_-$ on $[0,e_-]$ and to $\widetilde f_+-L$ on $[e_+,L]$,
and interpolate affinely across $J_0$. Its absolute slope on $J_0$ is
\[
s=2L-\gamma(L-1)=\gamma+(2-\gamma)L>\gamma>1.
\]
Thus $F(0)=L$, $F(L)=-L$, and extension by
$F(t+L)=F(t)-2L$ gives a continuous strictly decreasing lift of degree
$-2$. All its pieces expand lengths by a factor at least $\gamma>1$,
so the induced circle covering is positively expansive.

For the full pair, strict order preservation of one-sided itineraries
gives $f(c-)=0$ and $f(c+)=1$. After identifying $0$ and $1$, define
$\widetilde f$ by $f(x)\pmod 1$ for $x\ne c$ and by the common circle
limit $0\pmod 1$ at $c$, independently of the assigned interval value
$f(c)$. This defines a continuous degree $-2$ circle covering with two
full branches and the required exterior branch correspondence.
The full inverse-cylinder partitions have mesh tending to zero,
because their boundaries include the dense precritical set of $f$.
Matching these ordered partitions with those of $m_{-2}$ gives an
orientation-preserving conjugacy, hence positive expansivity.
\end{proof}

For the remainder of this subsection, let $\widetilde f$ on
$\widetilde S^1$ be the circle map supplied by
Lemma~\ref{lem:blow-up-completion}. We retain the notation
$\pi:\widetilde I\to[0,1]$ for the collapse map and $J_0$ for the
fibre over $c$, which is a singleton in the full case.

\begin{proposition}\label{por2}
For each chosen fixed point $x_0$ of $\widetilde f$, there exists a unique
orientation-preserving homeomorphism $\phi_{x_0}:\widetilde S^1\to S^1$
such that
\[
\phi_{x_0}(x_0)=0,
\qquad
\phi_{x_0}\circ\widetilde f=m_{-2}\circ\phi_{x_0}.
\]
\end{proposition}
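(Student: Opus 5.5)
This is essentially a direct application of Proposition~\ref{po1} to the circle map produced in Lemma~\ref{lem:blow-up-completion}. The plan is to check that $\widetilde f$ satisfies every hypothesis of Proposition~\ref{po1}, after fixing a homeomorphism between $\widetilde S^1$ and the standard circle.

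First, set up the identification. Parametrize $\widetilde S^1=\widetilde I/(0\sim L)$ by arc length and rescale by $1/L$ to obtain an orientation-preserving homeomorphism $\rho:\widetilde S^1\to S^1$. Then set $g:=\rho\circ\widetilde f\circ\rho^{-1}$. In the full case $L=1$ and $\rho$ is the identity.

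Next, verify the hypotheses of Proposition~\ref{po1} for $g$. Lemma~\ref{lem:blow-up-completion} gives that $\widetilde f$ is a continuous orientation-reversing covering of degree $-2$, and these properties are preserved by conjugation with $\rho$. Two points remain.
\begin{enumerate}
\item[(a)] The expansivity notion used in Proposition~\ref{po1} is topological (positive) expansivity. The lemma supplies positive expansivity directly. In the nonfull case every lift piece has slope of absolute value at least $\gamma>1$, so two distinct points separate by a fixed constant. In the full case it comes from the conjugacy to $m_{-2}$. Since $\rho$ is bi-Lipschitz, this property transfers to $g$.
\item[(b)] The fixed point $x_0$ of $\widetilde f$ corresponds to the fixed point $\rho(x_0)$ of $g$. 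By Proposition~\ref{po1}, $g$ has exactly three fixed points, so $\widetilde f$ does too.
\end{enumerate}

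With the hypotheses in place, Proposition~\ref{po1} applied to $g$ and $\rho(x_0)$ yields a unique orientation-preserving homeomorphism $\psi$ with
\[
\psi(\rho(x_0))=0,\qquad \psi\circ g=m_{-2}\circ\psi.
\]
Define $\phi_{x_0}:=\psi\circ\rho$. Then $\phi_{x_0}(x_0)=0$, and
\[
\phi_{x_0}\circ\widetilde f=\psi\circ g\circ\rho=m_{-2}\circ\psi\circ\rho=m_{-2}\circ\phi_{x_0}.
\]
For uniqueness, suppose $\phi'$ is another normalized orientation-preserving conjugacy. Then $\phi'\circ\rho^{-1}$ is a normalized orientation-preserving conjugacy from $g$ to $m_{-2}$. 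By the uniqueness clause of Proposition~\ref{po1} it equals $\psi$, so $\phi'=\phi_{x_0}$.

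The main obstacle is making sure that the expansivity hypothesis of Proposition~\ref{po1} is really what Lemma~\ref{lem:blow-up-completion} provides. Proposition~\ref{po1}'s proof uses an expansiveness constant to exclude nontrivial fibres. The remaining steps are formal transport of structure through $\rho$.
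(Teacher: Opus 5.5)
Your proposal is correct and matches the paper's proof: the paper also applies Proposition~\ref{po1} to the covering from Lemma~\ref{lem:blow-up-completion}, normalized at $x_0$, in a single line. Your transport through the rescaling $\rho$, your check that positive expansivity is the hypothesis Proposition~\ref{po1} actually uses, and your explicit uniqueness argument spell out steps the paper leaves implicit.
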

\begin{proof}
Apply Proposition~\ref{po1} to the circle covering supplied by
Lemma~\ref{lem:blow-up-completion}, normalized at $x_0$.
\end{proof}
For the interval realization, normalize the fixed point $b$ obtained
by identifying the outer endpoints, and put $\phi=\phi_b$.
In the nonfull case, the branch labels give
$\phi(e_-)=[k_-]=A$ and $\phi(e_+)=[k_+]=B$.
For $x\in C_k$, $\operatorname{int}J_x$ first enters
$\operatorname{int}J_0$ at time $k$, while the complement of the
fibre interiors is forward invariant.
Hence $\phi$ maps the inserted fibres onto the closures of the
components of
$\bigcup_{n\geq0}m_{-2}^{-n}((A,B))$.

\begin{remark}
The nonfull construction also applies when exactly one of
$k_-=\ell_0$ and $k_+=r_0$ holds. Only the full pair $(\ell_0,r_0)$
requires the construction without inserted intervals. In every case,
the circle is formed by identifying the two endpoints of the interval.
\end{remark}

 \begin{corollary}\label{cor:no-odd-period}
Let $(k_-,k_+)$ be the kneading pair of a map $f\in\mathrm{EDLM}$, and set
$
A=[k_-],
$ $
B=[k_+].$
Then neither $A$ nor $B$ is periodic under an odd iterate of
$m_{-2}$.
\end{corollary}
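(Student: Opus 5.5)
The plan is to reduce periodicity of $A$ or $B$ under an odd iterate of $m_{-2}$ to a symbolic equality $\sigma^{n}(k_\pm)=k_\pm$ with $n$ odd, which the odd part of the admissibility conditions forbids. By Proposition~\ref{prop:necessity}, the kneading pair of $f\in\mathrm{EDLM}$ is decreasing-admissible. In particular, for every $j\ge0$,
\[
\sigma^{2j+1}(k_-)\notin[k_-,k_+),\qquad \sigma^{2j+1}(k_+)\notin(k_-,k_+],
\]
so $\sigma^{n}(k_-)\neq k_-$ and $\sigma^{n}(k_+)\neq k_+$ for every odd $n$.

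Suppose now that $m_{-2}^{n}(A)=A$ for some odd $n$. The case of $B$ is symmetric. By \eqref{eq:negative-address-shift}, $m_{-2}^{n}([k_-])=[\sigma^{n}(k_-)]$, so $[\sigma^n(k_-)]=[k_-]$ on the circle. To pass from the circle to interval addresses, I use Lemma~\ref{lem:address-injectivity}, which gives $0<A\le 1/2\le B<1$. Since $A\neq0$, the interval value $\pi_{-2}(\sigma^n(k_-))$ can equal $A$ or, a priori, differ from it by an integer. The only integer-shifted possibilities are addresses $0$ and $1$, realized uniquely by $(12)^\infty$ and $(21)^\infty$, and these occur only if $A\in\{0,1\}$. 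That is excluded, so $\pi_{-2}(\sigma^n(k_-))=\pi_{-2}(k_-)$.

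Next I split into cases. In the nonfull case $A<B$, both sequences lie in $\Omega(k_-,k_+)$, because every shift of the kneading sequences avoids the gap. Lemma~\ref{lem:address-injectivity} says $\pi_{-2}$ is injective on this space, so $\sigma^n(k_-)=k_-$, contradicting the odd condition. In the full case $(k_-,k_+)=(\ell_0,r_0)$, we have $A=B=1/2$, whose $m_{-2}$-orbit is $1/2\mapsto 0\mapsto 0$. This is never periodic, so the claim holds directly.

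The step I expect to be the main obstacle is the circle-to-interval passage, namely making sure the circle equality really yields an interval-address equality. The argument above handles it using $A,B\in(0,1)$, the same device used in the proofs of Proposition~\ref{prop:nested} and Lemma~\ref{lem:automatic-jump}.
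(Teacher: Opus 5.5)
Your proposal is correct and follows essentially the same route as the paper. You dispose of the full pair $(\ell_0,r_0)$ directly via $m_{-2}^n(1/2)=0$. Otherwise you convert $m_{-2}^p(A)=A$ into $\pi_{-2}(\sigma^p k_-)=\pi_{-2}(k_-)$ and use injectivity of $\pi_{-2}$ on $\Omega(k_-,k_+)$ to contradict the odd part of decreasing-admissibility. Your explicit check that the circle equality lifts to an interval equality because $0<A\le 1/2\le B<1$ makes precise a step the paper leaves implicit.
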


\begin{proof}
For the full pair \((\ell_0,r_0)\), we have \(A=B=1/2\) and
\(m_{-2}^n(1/2)=0\) for every \(n\geq1\), so the assertion is immediate.
Otherwise, Proposition~\ref{prop:necessity} and
Lemma~\ref{lem:address-injectivity} give $0<A<B<1$ and injectivity
of $\pi_{-2}$ on $\Omega(k_-,k_+)$. If $m_{-2}^p(A)=A$ for odd $p$,
the address shift identity~\eqref{eq:negative-address-shift} gives
\[
\pi_{-2}(\sigma^p k_-)=\pi_{-2}(k_-).
\]
Both sequences belong to $\Omega(k_-,k_+)$, so $\sigma^p k_-=k_-$,
contrary to the odd part of equation \eqref{eq:adm-even-odd}. The argument for
$B$ is identical.
\end{proof}
\subsection{Proof of Theorems \ref{thm1} and \ref{thm2}}

{\bf \noindent Proof of Theorem \ref{thm1}}
\par The first part of Theorem~\ref{thm1} (necessity) is exactly
Proposition~\ref{prop:necessity}.
 The corresponding normalized circle realization and its
fibre correspondence are established in Section~\ref{sec2.3}.
\par
For the converse, let \((k_-,k_+)\) be decreasing-admissible. If it is
the full pair \((\ell_0,r_0)\), take \(f=T_{-2}\). Otherwise
Lemma~\ref{lem:address-injectivity} gives \(A<B\), so the collapse
construction applies. Lemma~\ref{lem:automatic-jump} gives the strict
ordering of the two critical values, Proposition~\ref{pro3.3} gives the
decreasing branches and the prescribed kneading pair, and
Proposition~\ref{prop:expansive} gives dense preimages of \(c\). Hence
\(f\in\mathrm{EDLM}\).

For uniqueness, let $f,g\in\mathrm{EDLM}$ have the same kneading pair.
In the full case, Lemma~\ref{lem:blow-up-completion} gives increasing
interval homeomorphisms identifying their two branches and one-sided
limits with those of $T_{-2}$. In the nonfull case, use the circle
conjugacies normalized at the respective points obtained by identifying
the outer endpoints. Then
\[
H:=\phi_g^{-1}\circ\phi_f,\qquad
H\circ\widetilde f=\widetilde g\circ H.
\]
The common kneading pair determines the same $A=[k_-]$ and $B=[k_+]$.
By the fibre correspondence following Proposition~\ref{por2}, both
families of inserted fibres map to the closures of the maximal components
of $\bigcup_{n\geq0}m_{-2}^{-n}((A,B))$. Hence $H$ matches these fibres,
including the central ones. Cutting at the normalized points and
collapsing the fibres gives an increasing interval homeomorphism $h$.
The branch semiconjugacies in Lemma~\ref{lem:blow-up-completion} yield
$h\circ f=g\circ h$ off the critical point, with the corresponding
one-sided identities there.
\hfill\(\square\)

\vspace{0.5cm}
{\bf \noindent Proof of Theorem \ref{thm2}}
\par
By Theorem~\ref{thm1}, an increasing interval homeomorphism identifies
the two branches and one-sided limits of $f$ with those of the canonical
realization $f_{(k_-,k_+)}$, with $T_{-2}$ used for the full pair.
This identification preserves upper and lower itineraries, so it suffices
to prove the assertions for this realization.

For the full pair \((\ell_0,r_0)\), no intervals are inserted, and the
itineraries of $T_{-2}$ give the full shift; the only pairs of sequences
representing the same point are
\(w\ell_0\) and \(wr_0\), at precritical points. Assume henceforth that
\(A<B\). Let $J_0=(A,B)$. Set
\[
K:=S^1\setminus\bigcup_{n\ge0}m_{-2}^{-n}(J_0).
\]
By Proposition~\ref{prop:nested}, the closures of the removed components
are nested or disjoint and do not touch end to end. Their lengths are
\(2^{-n}|J_0|\), and their union is dense because \(m_{-2}\) is expanding.
Thus \(K\) is closed and has empty interior. It has no isolated points,
since an isolated point would force two maximal removed intervals to have
closures meeting there, contrary to Proposition~\ref{prop:nested}. Hence
\(K\) is a Cantor set whose points {represent points of} the quotient interval.

For any sequence \(\omega\), its circle address satisfies
\(m_{-2}^n([\omega])=[\sigma^n(\omega)]\) by
\eqref{eq:negative-address-shift}. Since \([k_-]=A\) and \([k_+]=B\),
monotonicity in alternating order (Lemma~\ref{lem:address-injectivity})
gives
\[
[\omega]\in J_0
\quad\Longleftrightarrow\quad
\omega\in(k_-,k_+),
\qquad [\omega]\notin\{A,B\}.
\]
At \(A,B\) and their preimages, we use the exterior one-sided sequences
specified in Proposition~\ref{pro3.3}; two sequences of the same geometric
point need not both satisfy gap avoidance.
For every \(\omega\in\Omega(k_-,k_+)\), monotonicity and the address
shift relation give \([\omega]\in K\). After cutting at \(0\), the
quotient preserves this itinerary with the preceding one-sided choices
(Proposition~\ref{pro3.3}), so \(\omega\) is realized as an upper or lower
itinerary. Conversely, the alternating order and the one-sided shift
identity~\eqref{eq:one-sided-shift} imply that every realized itinerary
avoids \((k_-,k_+)\) under all shifts. Hence
$
\Omega(f)=\Omega(k_-,k_+).
$

Finally, after cutting at \(0\), two distinct points of \(K\) are identified
precisely when they are the endpoints of a maximal removed interval.
Their itineraries have a common finite prefix \(w\), followed, in either
order, by \(k_-\) and \(k_+\). Thus the only nontrivial identification
in passing from this cut set to the quotient interval is
$
wk_-\sim wk_+,
$
which represents the two sides of the same collapsed fibre.
\hfill\(\square\)

\section{Survivor sets of negative doubling maps}\label{secsec4}
We study $T_{-2}$ with a hole $(a,b)$, where
$0<a\leq1/2\leq b<1$, and write $c=1/2$ throughout this section.
For $a=c<b$, the survivor set $S_{-2}(c,b)$ uses the left completion,
with $T_{-2}(c-)=0$ and $\tau_{-2}(c-)=\ell_0=1(12)^\infty$.
For $a<c=b$, it uses the right completion, with
$\tau_{-2}(c+)=r_0=2(21)^\infty$. For $a<c<b$ the convention is
irrelevant, while $a=b=c$ retains the full kneading space
(Remark~\ref{re1}). Let $\ba$ and $\bb$ be the lower and upper boundary
sequences. The bifurcation parameter range is $[1/2,1)$.
Associate with $S_{-2}(a,b)$ the subshift determined by its boundary sequences:
\[
\Omega(\ba,\bb) = \{ w \in \{1,2\}^{\mathbb N} \,:\, \sigma^n w \notin (\ba,\bb) \ \mbox{for all}\ n \ge 0\}.
\]

\begin{remark}\label{rem:left-completion}
In the endpoint case \(a=c<b\), the lower boundary sequence is
\(\ba=\ell_0\), so the subshift determined by the boundary sequences
is \(\Omega(\ell_0,\bb)\). The successive shifts of \(\ell_0\) describe
the one-sided orbit \(c-\mapsto0\mapsto1\mapsto0\mapsto\cdots\);
the initial sequence is the left endpoint of the open forbidden interval,
while the subsequent orbit points lie outside \((c,b)\). If a point
first reaches \(c\) after finitely many iterates, its sequence is obtained
by appending the same tail \(\ell_0\). Thus the alternative tail
\(r_0\), corresponding to \(c+\), is not selected at the left
boundary.
The compact itinerary subshift of the completed interval survivor
system can differ from \(\Omega(\ell_0,\bb)\) at precritical
itineraries. Proposition~\ref{proptwoholes} shows that these exceptions
are at most countable and that the entropies agree.
\end{remark}

For general decreasing Lorenz maps, without assuming expansivity, we call
$(\ba,\bb)$ \emph{weak-admissible} if
\[
\ba,\bb\in\Omega(\ba,\bb).
\]
The lower boundary sequence $\ba$ is called \emph{self-admissible} when
$\ba\in\Omega(\ba,2(21)^\infty)$, and the upper boundary sequence $\bb$ is
\emph{self-admissible} when $\bb\in\Omega(1(12)^\infty,\bb)$. These conditions allow endpoint orbits.
If $\Omega(\ba,\bb)$ consists exactly of the two extremal sequences
$(12)^\infty$ and $(21)^\infty$, then the geometric survivor set is
$\{0,1\}$: only the endpoint orbit $0\mapsto1\mapsto0$ survives.
\subsection{Realization of decreasing Lorenz maps}\label{DLM realization}
The following lemma realizes symbolic survivor spaces without assuming
expansivity.
\begin{lemma}\label{lem:dlm-realization}
Let
\[
k_-\in1\{1,2\}^{\mathbb N},\qquad k_+\in2\{1,2\}^{\mathbb N}.
\]
Suppose that $(k_-,k_+)$ is weak-admissible and
$\sigma(k_-)\prec\sigma(k_+).$
Then there exists a decreasing Lorenz map
\(g\in\mathrm{DLM}\) whose kneading invariants are $(k_-,k_+)$ and whose
kneading space is
$
\Omega(g)=\Omega(k_-,k_+).
$
\end{lemma}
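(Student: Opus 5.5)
The plan is to realize $\Omega:=\Omega(k_-,k_+)$ directly as an ordered compact subset of an interval and to fill its gaps affinely, in the spirit of the Hubbard--Sparrow insertion used in Section~\ref{sec2.3}. The circle-collapse construction of Section~\ref{constr} is not used, because it relied on the parity conditions of Definition~\ref{def:decreasing-admissible} (through Proposition~\ref{prop:nested}), and those are not available under weak admissibility alone.

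\emph{Step 1: structure of the gaps.} By weak admissibility, $k_-,k_+\in\Omega$ and $\sigma(\Omega)\subseteq\Omega$. Since $\Omega$ is closed and every element starting with $1$ is $\preceq\ell_0\prec r_0\preceq$ every element starting with $2$, the element $k_-$ is the largest element of $\Omega$ beginning with $1$. Indeed, any $\omega\in1\{1,2\}^{\mathbb N}$ with $\omega\succ k_-$ lies in $(k_-,k_+)$. Symmetrically, $k_+$ is the smallest element of $\Omega$ beginning with $2$. Now let $u\prec v$ be consecutive in $\Omega$, meaning no element of $\Omega$ lies strictly between them, and write $u=w\alpha$, $v=w\beta$ with $w$ their maximal common prefix. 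The map $\omega\mapsto w\omega$ is monotone (increasing or decreasing according to the parity of $|w|$) and sends $\Omega\cap\sigma^{|w|}$-preimages into $\Omega$. From this I expect $\{\alpha,\beta\}=\{k_-,k_+\}$. Hence the gaps of $\Omega$ are exactly the pairs $\{wk_-,wk_+\}$ with both members in $\Omega$, and there are countably many of them. The delicate point here is proving that $\alpha,\beta$ are themselves consecutive in $\Omega$ after stripping $w$; I would argue that any $\gamma\in\Omega$ strictly between $\alpha$ and $\beta$ gives $w\gamma$ strictly between $u$ and $v$, and that $w\gamma\in\Omega$ because $u\in\Omega$ and $w\gamma$ lies between $u$ and $v$ in every shifted comparison.

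\emph{Step 2: an order embedding.} I define
\[
\psi(\omega):=\pi_{-2}(\omega)+\sum_{\{wk_-,wk_+\}\subset\Omega,\ wk_\pm\preceq\omega}2^{-2|w|}\cdot\tfrac12,
\]
where the sum runs over gaps whose upper endpoint is $\preceq\omega$, and then normalize so that $\psi((12)^\infty)=0$ and $\psi((21)^\infty)=1$. These two extremal sequences lie in $\Omega$ since they are the minimum and maximum for $\prec$. The map $\pi_{-2}$ is nondecreasing by Lemma~\ref{lem:address-injectivity}, and the gap weights are summable. So $\psi$ is strictly increasing, and $\psi(\Omega)$ is a compact set whose complementary open intervals are exactly the images of the gaps from Step~1. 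Put $c$ inside $(\psi(k_-),\psi(k_+))$.

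\emph{Step 3: defining $g$ and reading off its properties.} On $\psi(\Omega)\setminus\{c\}$ set $g(\psi(\omega))=\psi(\sigma\omega)$, and on each gap other than the critical one extend $g$ affinely. On the left part ($\omega_1=1$) and the right part ($\omega_1=2$), $\sigma$ is strictly decreasing in the alternating order, so $g$ is strictly decreasing on both branches, and it is continuous because $\psi$ and $\sigma$ are. Moreover $g(0)=1$ and $g(1)=0$, since $\sigma(12)^\infty=(21)^\infty$. The one-sided limits at $c$ are $\psi(\sigma k_-)<\psi(\sigma k_+)$, using the hypothesis $\sigma(k_-)\prec\sigma(k_+)$. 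Hence $g\in\mathrm{DLM}$ with kneading invariants $(k_-,k_+)$.

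For the kneading space, points of $\psi(\Omega)$ have itinerary $\omega$. A point inside a non-critical gap $\{wk_-,wk_+\}$ follows that gap affinely for $|w|$ steps and then lands in the critical gap, or at $c$. Its one-sided itineraries are therefore $wk_-$ or $wk_+$, both of which lie in $\Omega$. This gives $\Omega(g)=\Omega$. I expect the main obstacle to be the consecutive-pair characterization in Step~1, in particular checking closure of $\Omega$ under the prefix insertion, together with verifying that affine filling across gaps cannot break strict monotonicity when the image gap is not a gap but an interval containing points of $\psi(\Omega)$. That second point should be harmless, since an affine map from one interval onto another is strictly monotone regardless.
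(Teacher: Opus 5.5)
\paragraph{Where the proof breaks.} Your Steps~1 and~2 are sound. The consecutive pairs of $\Omega$ are exactly the pairs $\{wk_-,wk_+\}\subseteq\Omega$: for $j<|w|$ the shifts $\sigma^j u,\sigma^j v$ share their first symbol, so everything between them lies on one side of $(k_-,k_+)$, and this is what gives $w\gamma\in\Omega$. Also, $\psi$ is a continuous order embedding whose complementary intervals are the gaps.

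The proof breaks at the critical gap in Step~3. You place $c$ strictly inside $(\psi(k_-),\psi(k_+))$, which has positive length. But you define $g$ only on $\psi(\Omega)$ and on the non-critical gaps, so $g$ is undefined on $(\psi(k_-),c)\cup(c,\psi(k_+))$. Your claimed limits $g(c\mp)=\psi(\sigma k_\mp)$ would force $g$ to be constant there. Any strictly decreasing extension instead gives $g(c-)<g(\psi(k_-))=\psi(\sigma k_-)$. Take the weak-admissible pair $k_-=1(12)^\infty$, $k_+=2^\infty$, which satisfies $\sigma(k_-)\prec\sigma(k_+)$. There $g(\psi(k_-))=\psi((12)^\infty)=0$, so no extension into $[0,1]$ exists at all.

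Nor can the half-gaps be filled in some other way. If $\Omega(g)=\Omega$, monotonicity of itineraries forces every point of $(\psi(k_-),c)$ to have itinerary $k_-=\max\Omega_1$. So $g$ sends a nondegenerate interval onto points of constant itinerary $\sigma(k_-)$ lying just to the left of $\psi(\sigma k_-)$. In your construction the elements of $\Omega$ are single points, and gap interiors carry the itineraries $wk_\pm$. So this is impossible unless $\sigma(k_-)$ happens to be the upper member of a gap. Iterating, the forward orbits of $k_\pm$, and then their preimages in $\Omega$, would all have to be represented by nondegenerate intervals.

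Collapsing the critical gap does not rescue the argument either. Every non-critical gap $\{wk_-,wk_+\}$ is carried onto it after $|w|$ steps, so all gaps would have to collapse. Then $g(c-)=g(c+)$ whenever $\sigma(k_-),\sigma(k_+)$ are consecutive, for example for $(k_-,k_+)=(1^\infty,2^\infty)$.

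\paragraph{The missing idea.} The paper blows up \emph{points} rather than gaps. It takes a countable set $D\subseteq\Omega$ that contains $(12)^\infty$, $(21)^\infty$, $k_\pm$ and both members of every consecutive pair. $D$ is closed under $\sigma$ and under preimages in $\Omega$, so $\omega\in D\Leftrightarrow\sigma(\omega)\in D$.
\begin{itemize}
\item Each $\omega\in D$ gets an interval $I_\omega=[L(\omega),R(\omega)]$ of positive, summable length, via a probability measure with atoms on $D$.
\item $D$ is also taken dense so that non-adjacent labels are separated; your $\pi_{-2}$ term could play that role.
\item Intervals of consecutive labels share an endpoint, so gaps have length zero, and $c:=R(k_-)=L(k_+)$.
\item Each $I_\omega$ is mapped affinely and decreasingly onto $I_{\sigma(\omega)}$.
\end{itemize}
Then the interiors of $I_{k_\mp}$ give the kneading pair $(k_-,k_+)$. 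The jump $g(c-)=L(\sigma k_-)<R(\sigma k_+)=g(c+)$ holds because $I_{\sigma k_\pm}$ are nondegenerate, even when $\sigma(k_-)$ and $\sigma(k_+)$ are adjacent.
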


\begin{proof}
Put $\Omega=\Omega(k_-,k_+)$. This is a compact shift-invariant space
whose topology agrees with the alternating order, with least and greatest
elements $e_0=(12)^\infty$ and $e_1=(21)^\infty$. Write
$\Omega_i=\{\omega\in\Omega:\omega_1=i\}$. Weak-admissibility gives
$\max\Omega_1=k_-$ and $\min\Omega_2=k_+$, so $k_-,k_+$ are adjacent.

\noindent\textup{(i)}
Choose a countable dense set $D_0\subseteq\Omega$ containing
$e_0,e_1,k_-,k_+$ and both members of every adjacent pair. Such pairs
are countable: each is determined by its maximal common finite prefix
as the facing endpoints of the corresponding symbol cylinders. Set
\[
D=\bigcup_{m,n\geq0}\sigma^{-m}\bigl(\sigma^n(D_0)\bigr),
\]
with all preimages taken in $\Omega$. Since the shift has at most two
preimages, $D$ is countable; it is closed under shifts and allowed
preimages, so $\omega\in D\Longleftrightarrow\sigma(\omega)\in D$.
Choose a probability measure $\mu$ concentrated on $D$, with positive
mass at every element of $D$, and set
\[
L(\omega)=\mu\{\nu\in\Omega:\nu\prec\omega\},\qquad
R(\omega)=\mu\{\nu\in\Omega:\nu\preceq\omega\},\qquad
I_\omega=[L(\omega),R(\omega)].
\]
For $x\in(0,1)$, compactness and right-continuity of $R$ give a least
$\omega_x$ with $R(\omega_x)\geq x$. If $\omega_x\ne e_0$, minimality gives
$
L(\omega_x)=\sup_{\nu\prec\omega_x}R(\nu)\leq x\leq R(\omega_x);
$
if $\omega_x=e_0$, the left inequality follows from $L(e_0)=0$.
Thus $x\in I_{\omega_x}$, and $0\in I_{e_0}$, $1\in I_{e_1}$ prove coverage
of $[0,1]$. For $\omega\prec\nu$, density of $D$ gives
$R(\omega)\leq L(\nu)$, with equality exactly when the labels are adjacent.
Hence the intervals have disjoint interiors and meet precisely at
adjacent endpoints; they are nondegenerate exactly for $\omega\in D$.
In particular,
\[
c:=R(k_-)=L(k_+)\in(0,1),\qquad
\bigcup_{\omega\in\Omega_1}I_\omega=[0,c],\quad
\bigcup_{\omega\in\Omega_2}I_\omega=[c,1],
\]
where $0<c<1$ follows from the positive masses at $k_-$ and $k_+$.

\noindent\textup{(ii)}
For $\eta\in\Omega$, gap avoidance gives
$1\eta\in\Omega_1\Longleftrightarrow1\eta\preceq k_-$ and
$2\eta\in\Omega_2\Longleftrightarrow2\eta\succeq k_+$.
Deleting a common first symbol reverses the alternating order, so
\[
\sigma(\Omega_1)=\{\eta\in\Omega:\eta\succeq\sigma(k_-)\},\qquad
\sigma(\Omega_2)=\{\eta\in\Omega:\eta\preceq\sigma(k_+)\}.
\]
For $\omega\in D\cap\Omega_i$, map $I_\omega$ onto
$I_{\sigma(\omega)}$ by the decreasing affine homeomorphism; for
$\omega\notin D$, map its singleton fibre to the corresponding singleton.
Adjacent $\omega\prec\nu$ in one branch have adjacent images in reverse
order, so both prescriptions send their shared endpoint to
$L(\sigma(\omega))=R(\sigma(\nu))$. By coverage and reversal of order,
these pieces form strictly decreasing bijections
\[
G_1:[0,c]\longrightarrow[L(\sigma(k_-)),1],\qquad
G_2:[c,1]\longrightarrow[0,R(\sigma(k_+))],
\]
which are homeomorphisms because their domains and ranges are compact
intervals. {Define \(g:I\to I\) by \(g=G_1\) on \([0,c)\),
\(g=G_2\) on \((c,1]\), and choose any \(g(c)\in I\).
The kneading sequences are determined by the two branch limits and
are independent of this last choice.}
Since $\sigma(e_0)=e_1$ and $\sigma(e_1)=e_0$, we have $g(0)=1$, $g(1)=0$.
The strict order $\sigma(k_-)\prec\sigma(k_+)$ and the positive masses
at these two sequences give
\[
g(c-)=L(\sigma(k_-))<R(\sigma(k_+))=g(c+).
\]
Thus $g\in\mathrm{DLM}$.

\noindent\textup{(iii)}
For $\omega\in D$, the affine definition maps
$\operatorname{int}I_\omega$ onto $\operatorname{int}I_{\sigma(\omega)}$;
iteration shows that every point of this interior has itinerary $\omega$.
For $\omega\notin D$, all shifts remain outside $D$, so their fibres are
singletons and cannot be shared endpoints, whose labels belong to $D$.
The same argument therefore realizes $\omega$ at its singleton fibre.
At a shared endpoint, approaching through the adjacent fibre interiors
gives their labels as the two one-sided itineraries. In particular, the
kneading pair at $c$ is $(k_-,k_+)$, and every sequence in $\Omega$ is realized.

Conversely, the itinerary map is nondecreasing in the alternating order:
at the first differing symbol, spatial and symbolic order have undergone
the same number of reversals. Thus every realized one-sided itinerary
beginning with $1$ is at most $k_-$, and one beginning with $2$ is at least
$k_+$. By the one-sided shift identity~\eqref{eq:one-sided-shift} and the
branch-limit identities at $c$, these inequalities hold for every shift.
Hence every realized itinerary belongs to $\Omega$, proving
$\Omega(g)=\Omega(k_-,k_+)$.
\end{proof}

\subsection{Symbolic adjustment lemmas}
 We first record the right-boundary adjustment for an arbitrary fixed left
  endpoint. The argument is symbolic.
\begin{lemma}\label{lelele11}
Let $T_{-2}$ have a hole $(a,b)$ with $0\leq a\leq 1/2$, and keep
$a$ fixed while $b$ varies in $(1/2,1)$. Let $\ba$ and $\bb=(v_1v_2\cdots)$ be the
lower and upper boundary sequences, respectively.  If $\bb$ is not
self-admissible, let $n$ be the minimal integer such that
$\sigma^n(\bb)\prec\bb$ with $v_{n+1}=2$, and define
$\bb'=(v_1\cdots v_n)^\infty$.  Then
\begin{enumerate}
    \item $\bb'$ is self-admissible, i.e., $\bb' \in \Omega( 1(12)^\infty, \bb')$;
    \item if $n$ is even, $\bb'\succ\bb$ and $\Omega(\ba, \bb) = \Omega(\ba, \bb')$;
    \item if $n$ is odd, $\bb'\prec\bb$ and $\Omega(\ba, \bb') \setminus \Omega(\ba, \bb)$ is countable.
\end{enumerate}
\end{lemma}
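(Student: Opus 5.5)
The plan is to exploit the fact that the prefix $w:=v_1\cdots v_n$ acts on sequences by $\varphi(x):=wx$. By the definition of the alternating order, $\varphi$ is order-preserving when $n$ is even and order-reversing when $n$ is odd. Moreover $\bb'=w^\infty$ is the unique fixed point of $\varphi$, which is a contraction for the metric $d$.

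Write $\bb=wu$ with $u:=\sigma^n(\bb)$. By hypothesis $u\prec\bb=\varphi(u)$, and $u$ begins with $v_{n+1}=2$. Since $a\le 1/2$, the sequence $\ba$ begins with $1$, so $\ba\prec u$.

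\emph{Comparison of $\bb$ and $\bb'$.}
\begin{itemize}
\item If $n$ is even, $\varphi$ preserves order. From $u\prec\varphi(u)$ we get an increasing sequence $\varphi^k(u)$ that converges to $\bb'$, so $\bb=\varphi(u)\prec\bb'$. Strictness holds because $\bb\neq\bb'$: otherwise $u=\bb$.
\item If $n$ is odd, $\varphi$ reverses order. Then $\varphi^k(u)$ alternates about the fixed point, so $\bb'$ lies between $u$ and $\varphi(u)=\bb$. This gives $u\prec\bb'\prec\bb$.
\end{itemize}

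\emph{Part (ii).} Since $(\ba,\bb)\subset(\ba,\bb')$, we have $\Omega(\ba,\bb')\subseteq\Omega(\ba,\bb)$. For the reverse inclusion, suppose $\omega\in\Omega(\ba,\bb)$ has $\sigma^k\omega\in[\bb,\bb')$. Both endpoints begin with $w$, and cylinders are intervals in the alternating order, so $\sigma^k\omega=w\eta$. Because $\varphi$ preserves order, $u\preceq\eta\prec\bb'$. If $\eta\prec\bb$, then $\eta\in(\ba,\bb)$ (using $\ba\prec u$, or $\eta=u$ itself), which is forbidden. Hence $\eta\in[\bb,\bb')$, and we repeat the argument. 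Iterating forces $\sigma^k\omega=w^\infty=\bb'\notin[\bb,\bb')$, a contradiction. This is the same argument that shows $\bb\notin\Omega(\ba,\bb)$.

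\emph{Part (iii).} Here $\Omega(\ba,\bb)\subseteq\Omega(\ba,\bb')$. An element $\omega$ of the difference has some $\sigma^k\omega\in[\bb',\bb)$, so again $\sigma^k\omega=w\eta$. Order reversal now gives $u\prec\eta\preceq\bb'$. If $\eta\prec\bb'$, then $\eta\in(\ba,\bb')$, which is forbidden in $\Omega(\ba,\bb')$. So $\eta=\bb'$ and $\sigma^k\omega=\bb'$. Thus $\omega$ is a finite word followed by $\bb'$, and there are countably many such $\omega$.

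\emph{Part (i), the main obstacle.} We need every shift $\sigma^j(\bb')$ beginning with $2$ to satisfy $\sigma^j(\bb')\succeq\bb'$. By periodicity it suffices to treat $0<j<n$ with $v_{j+1}=2$. Minimality of $n$ gives $\sigma^j(\bb)\succeq\bb$ for these $j$. I would compare $\sigma^j(\bb')=v_{j+1}\cdots v_n\bb'$ with $\bb'$ by case analysis on where $\sigma^j\bb$ and $\bb$ first differ.
\begin{itemize}
\item If they differ within the first $n-j$ symbols, the same strict inequality transfers to $\bb'$, because the prefixes agree with those of $\bb$.
\item Otherwise $v_{j+1}\cdots v_n=v_1\cdots v_{n-j}$. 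The comparison then reduces, after a parity-dependent reversal, to comparing $\bb'$ with $\sigma^{n-j}(\bb')$.
\end{itemize}
The second case gives a descent that I would close by strong induction on $j$, using $\sigma^n\bb\prec\bb$ to supply the terminal inequality. Keeping track of the parities of $j$ and $n-j$ in this Parry-type argument is the delicate point. I would try first to show that the descent always lands at an index whose shift begins with $2$, so that minimality applies again.
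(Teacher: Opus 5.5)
\textbf{Parts (2), (3) and the comparison of $\bb$ with $\bb'$.} These are correct and follow the paper's argument: you delete the common prefix $w=v_1\cdots v_n$ and iterate $\sigma^n$. Your fixed-point argument for the position of $\bb'$ relative to $\bb$ is a neat substitute for the paper's block-by-block comparison.

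\textbf{Part (1) is the gap.} You leave it as a plan, and the plan as stated does not close. Your reduction turns the question at index $j$ into a comparison of $\bb'$ with $\sigma^{h}(\bb')$, where $h=n-j$. That is, it moves the problem to index $h$, which need not be smaller than $j$, so strong induction on $j$ is not well founded. For example, take $\bb=2^41^\infty$. Then $n=2$ and $\bb'=2^\infty$, and the only relevant index $j=1$ is sent to $h=1=j$, so the descent loops. Your auxiliary claim that the descent lands at an index whose shift begins with $2$ is also false when $h$ is even. For $\bb=21121\,2^\infty$ one has $n=5$, and $j=3$ falls into your second case with $h=2$ and $v_{h+1}=1$. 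In that parity, it is exactly $v_{h+1}=1$ that finishes the argument.

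\textbf{What is missing.} You need a one-step parity split that uses $v_{n+1}=2$ and $\sigma^n(\bb)\prec\bb$; no induction is required. This is essentially the paper's contradiction argument. In your second case put $Y=v_1\cdots v_h$, so that
$\sigma^j(\bb)=Y\sigma^n(\bb)$, $\bb=Y\sigma^h(\bb)$, $\sigma^j(\bb')=Y\bb'$ and $\bb'=Y\sigma^h(\bb')$.

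\emph{If $h$ is even}, then $\sigma^j(\bb)\succeq\bb$ gives $\sigma^h(\bb)\preceq\sigma^n(\bb)\prec\bb$. Minimality of $n$ then forces $v_{h+1}=1$. Hence $\sigma^h(\bb')\prec\bb'$, and the even prefix $Y$ gives $\sigma^j(\bb')\succ\bb'$.

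\emph{If $h$ is odd}, then $\sigma^j(\bb)\succeq\bb$ gives $\sigma^n(\bb)\preceq\sigma^h(\bb)$. Since $\sigma^n(\bb)$ begins with $2$, this forces $v_{h+1}=2$, and minimality gives $\sigma^h(\bb)\succeq\bb$. Now compare $Z=v_{h+1}\cdots v_n$ with $X=v_1\cdots v_j$.
If $Z\ne X$, the strict inequality transfers to $\sigma^h(\bb')\succ\bb'$, and the odd prefix $Y$ gives $\sigma^j(\bb')\succ\bb'$.
If $Z=X$, then $\sigma^h(\bb')=X\bb'=XY\sigma^h(\bb')=w\,\sigma^h(\bb')$. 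So $\sigma^h(\bb')=\bb'$, and therefore $\sigma^j(\bb')=Y\bb'=\bb'$.

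This last periodic case is exactly the one on which your descent would loop.
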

\begin{proof}
The proof is independent of the numerical value of $a$.  Indeed, every
lower boundary sequence begins with $1$, whereas the first violating shift
$\sigma^n(\bb)$ begins with $2$.  Thus this violating shift lies above the
fixed lower boundary and enters the forbidden interval $(\ba,\bb)$ for every
fixed $a\leq 1/2$.  The remaining comparisons use only the common prefix
$v_1\cdots v_n$, its parity, and the fact that $\ba$ is kept fixed.
\par
\rm (1) We proceed by contradiction. Suppose $\bb'$ is not self-admissible. Then
there exists a minimal $r<n$ such that $\sigma^r(\bb')\prec\bb'$, which means
$
(v_{r+1}\cdots v_n v_1\cdots v_r)  \prec (v_1\cdots v_n).
$ and $v_{r+1}=2$.
If $v_{r+1}\cdots v_n \prec v_1\cdots v_{n-r}$, then comparing the
original sequence $\bb$ at the same shift gives $\sigma^r(\bb)\prec\bb$,
contradicting the minimality of $n$. Hence we must have
$v_{r+1}\cdots v_n = v_1\cdots v_{n-r}$, and we split the proof into
two cases based on the parity of $r$. Here we only give the proof for odd
$n$; the case of even  $n$ is entirely analogous.

\emph{Case 1: $r$ is even.} Then $n-r$ is odd (since $n$ is odd).
By the equality $v_{r+1}\cdots v_n = v_1\cdots v_{n-r}$ and the parity
condition, we obtain $v_1\cdots v_r \succ v_{n-r+1}\cdots v_n$.
If $v_{n-r+1}=2$, then $\sigma^{n-r}(\bb)\prec\bb$, contradicting the
minimality of $n$. If $v_{n-r+1}=1$, then using the structure of $\bb$ and $v_{n+1}=2$,
we get $\sigma^r(\bb)=(v_{r+1}\cdots v_{n}v_{n+1}\cdots)\prec\bb=(v_1\cdots v_{n-r}v_{n-r+1}\cdots)$, again a contradiction.

\emph{Case 2: $r$ is odd.} Then $n-r$ is even, giving
$v_1\cdots v_r \prec v_{n-r+1}\cdots v_n$. By the assumption that $\sigma^n(\bb) \prec \bb$, we have $v_{n+1}\cdots v_{n+r}
\preceq v_1\cdots v_r$.
Consequently,
\[
v_{r+1}\cdots v_n v_{n+1}\cdots v_{n+r}
\preceq v_1\cdots v_{n-r}v_1\cdots v_r
\prec v_1\cdots v_{n-r}v_{n-r+1}\cdots v_n,
\]
which yields $\sigma^r(\bb)\prec\bb$, contradicting the minimality of $n$. Thus $\bb'$ must be self-admissible.

\par (2) Let $n$ be even. We first prove that $\bb'\succ\bb$.
Write $\bb = v_1\cdots v_n \sigma^n(\bb)$. Since $\bb' = (v_1\cdots v_n)^\infty$,
both sequences agree on the first $n$ symbols. Let $k\ge 1$ be the smallest
integer such that the $k$-th blocks of length $n$ differ, i.e.,
\[
v_{(k-1)n+1}\cdots v_{kn} \neq v_1\cdots v_n
\quad\text{for } \bb' \text{ and } \bb.
\]
Such a $k$ exists because $\bb'\neq\bb$. The condition
$\sigma^n(\bb)\prec\bb$ together with the minimality of $k$ implies that
the $k$-th block of $\bb$ is strictly smaller than the corresponding block
of $\bb'$ in the alternating order. Since $n$ is even, comparing sequences
block-by-block is compatible with the alternating lexicographic order,
and we obtain $\bb'\succ\bb$.
\par Next we prove that $\Omega(\ba, \bb) = \Omega(\ba, \bb')$ when $n$ is even.
Since $\bb'\succ\bb$, clearly $\Omega(\ba,\bb')\subseteq\Omega(\ba,\bb)$.
For the reverse inclusion, suppose that
$w\in\Omega(\ba,\bb)\setminus\Omega(\ba,\bb')$. Let $k\geq0$ be the
first time such that
$
u:=\sigma^k(w)\in[\bb,\bb').
$
Put
\[
p:=v_1\cdots v_n,\qquad T:=\sigma^n,\qquad
q:=T(\bb)=\sigma^n(\bb).
\]
Since $n$ is even, deleting the common prefix $p$ preserves the
alternating order. Moreover, $q\prec\bb$ and $T(\bb')=\bb'$. Every
sequence in $[\bb,\bb')$ has the prefix $p$, and hence
\[
q=T(\bb)\preceq T(u)\prec T(\bb')=\bb'.
\]

We claim that some iterate $T^j(u)$ belongs to $(\ba,\bb)$. If
$T(u)\prec\bb$, then $T(u)\succeq q$. Since $q$ starts with $2$,
$T(u)$ also starts with $2$, whereas $\ba$ starts with $1$. Therefore
$
\ba\prec T(u)\prec\bb.
$
If $T(u)\succeq\bb$, then $T(u)\in[\bb,\bb')$, and the same argument
can be repeated.

Suppose that no iterate enters $(\ba,\bb)$. Then
$
T^j(u)\in[\bb,\bb')$ for every $j\geq0.
$
Each $T^j(u)$ begins with $p$. Thus every block of $u$ of length $n$ is
equal to $p$, and consequently
$
u=p^\infty=\bb',
$
which contradicts $u\in[\bb,\bb')$. Hence there exists $j\geq1$ such
that $T^j(u)\in(\ba,\bb)$. It follows that
$
\sigma^{k+jn}(w)=T^j(u)\in(\ba,\bb),
$
contradicting $w\in\Omega(\ba,\bb)$. Therefore
$
\Omega(\ba,\bb)\subseteq\Omega(\ba,\bb').
$
Together with the inclusion proved above, this gives
$
\Omega(\ba,\bb)=\Omega(\ba,\bb').
$
\par (3) Let $n$ be odd.  As in the proof of (2), we verify that
$\bb'\prec\bb$, and hence
$\Omega(\ba,\bb)\subseteq\Omega(\ba,\bb')$.  Take any
\(w\in\Omega(\ba,\bb')\setminus\Omega(\ba,\bb)\). There exists a minimal $k\ge 0$
such that $\sigma^k(w)\in[\bb',\bb)$, and set $u=\sigma^k(w)$. If
$u=\bb'$, then $w$ is a preimage of $\bb'$. Otherwise,
$\bb'\prec u\prec\bb$. As $\bb,\bb'$ share the prefix $v_1\cdots v_n$ and
$u$ lies strictly between them, $u$ also begins with $v_1\cdots v_n$.
Since $n$ is odd, $\sigma^n$ reverses the alternating order on sequences
with a common prefix of length $n$. Applying $\sigma^n$ to
$\bb'\prec u\prec\bb$ yields
\[
\bb'=\sigma^n(\bb')\succ\sigma^n(u)\succ\sigma^n(\bb),
\]
which indicates that
$\sigma^n(u)\in(\ba,\bb')$. Hence $\sigma^{k+n}(w)\in(\ba,\bb')$,
and $w\notin\Omega(\ba,\bb')$.
\par Thus every point in $\Omega(\ba,\bb')\setminus\Omega(\ba,\bb)$ is a preimage
of $\bb'$ that belongs to $\Omega(\ba,\bb')$.
In particular, this set is countable.
\end{proof}

\begin{remark}\label{rerere1}
\begin{enumerate}
\item In Lemma~\ref{lelele11}, $n$ is the first time that a shift violates
self-admissibility; $\bb$ need not be periodic.
The parity conclusions concern this first violating time.
    \item By Lemma~\ref{lelele11}, the two survivor spaces differ by at most
    a countable set. Lemma~\ref{lem:countable-diff-entropy} therefore gives
    $h_{\mathrm{top}}(\Omega(\ba,\bb'))=h_{\mathrm{top}}(\Omega(\ba,\bb)).$
    \item  If $\ba\notin\Omega(\ba,2(21)^\infty)$ or
    $\bb\notin\Omega(1(12)^\infty,\bb)$, we can  replace them by
    self-admissible sequences $\ba'$ and $\bb'$ via Lemma~\ref{lelele11}
    (for the right endpoint) and its symmetric counterpart (for the left
    endpoint). These replacements preserve entropy by
    Lemma~\ref{lem:countable-diff-entropy}, so they may be used in entropy
    calculations. An odd adjustment can change the survivor space by a
    nonempty countable set; for exact survivor-space plateaux and
    bifurcation sets, the original boundaries and the stated
    admissibility hypotheses must therefore be retained.
\end{enumerate}
\end{remark}

\par  The following
lemma deals with the case that $(\ba,\bb)$ is not weak-admissible.

\begin{lemma}\label{lem:self-admissible-adjust}
Let $\ba\in1\{1,2\}^{\mathbb N}$ and
$\bb=(v_1v_2\cdots)\in2\{1,2\}^{\mathbb N}$ be self-admissible.
Suppose that there exists a least $n\ge1$ such that
$\sigma^n(\bb)\succ\ba$ and $v_{n+1}=1$, and put
$p=v_1\cdots v_n$ and $\bv'=p^\infty$. Then $\bv'$ is self-admissible.
Moreover, the following statements hold.
\begin{enumerate}
\item\label{it:self-adjust-even} If $n$ is even, then $\bv'\succ\bb$ and
$
\Omega(\ba,\bv')=\Omega(\ba,\bb).
$
\item\label{it:self-adjust-odd} If $n$ is odd, then $\bv'\prec\bb$,
$\Omega(\ba,\bb)\subseteq\Omega(\ba,\bv')$, and
\[
\Omega(\ba,\bv')\setminus\Omega(\ba,\bb)
=\bigl\{\omega\in\Omega(\ba,\bv'):\
\sigma^k(\omega)=\bv'\text{ for some }k\ge0\bigr\}.
\]
\end{enumerate}
Consequently, $\Omega(\ba,\bv')\triangle\Omega(\ba,\bb)$ is at most
countable in either case.
\end{lemma}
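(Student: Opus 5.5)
The plan follows the template of Lemma~\ref{lelele11}, with the roles of the two violations exchanged. There the violating shift began with $2$ and fell below $\bb$. Here the first violating shift $\sigma^n(\bb)$ begins with $1$ and lies above $\ba$, so it enters $(\ba,\bb)$. Since $\bb$ is self-admissible, no earlier shift beginning with $2$ lies below $\bb$; by minimality of $n$, every earlier shift beginning with $1$ is $\preceq\ba$. So all shifts $\sigma^r(\bb)$ with $0<r<n$ avoid $(\ba,\bb)$. I will carry out four steps.

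\emph{Step 1: self-admissibility of $\bv'=p^\infty$.} Suppose some $r<n$ has $\sigma^r(\bv')\prec\bv'$ with $v_{r+1}=2$.
\begin{itemize}
\item If the prefixes $v_{r+1}\cdots v_n$ and $v_1\cdots v_{n-r}$ differ, the comparison transfers to $\sigma^r(\bb)\prec\bb$, contradicting self-admissibility of $\bb$.
\item If they coincide, I split on the parity of $r$, as in Lemma~\ref{lelele11}(1). The wrap-around comparison then involves $v_{n+1}\cdots$, which starts with $1$, against $v_1\cdots v_r$, which starts with $2$. This determines the sign according to parity and again yields $\sigma^r(\bb)\prec\bb$ or $\sigma^{n-r}(\bb)\prec\bb$.
\end{itemize}
This case analysis is routine but fiddly.

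\emph{Step 2: the order between $\bv'$ and $\bb$.} The two sequences agree on $p$. Compare $\sigma^n(\bv')=\bv'$, which begins with $v_1=2$, with $\sigma^n(\bb)$, which begins with $1$. Since $2\succ1$ at an odd position, we get $\bv'\succ\bb$ after the prefix when $n$ is even, and $\bv'\prec\bb$ when $n$ is odd.

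\emph{Step 3: $n$ even.} The inclusion $\Omega(\ba,\bv')\subseteq\Omega(\ba,\bb)$ is immediate. For the reverse inclusion, take $w\in\Omega(\ba,\bb)$ and suppose some $u=\sigma^k(w)\in[\bb,\bv')$. Then $u$ begins with $p$. Applying $T=\sigma^n$ preserves order, so
\[
\sigma^n(\bb)\preceq T(u)\prec\bv'.
\]
Since $\sigma^n(\bb)\succ\ba$, either $T(u)\in(\ba,\bb)$, which is a contradiction, or $T(u)\in[\bb,\bv')$, and the argument iterates. If it iterates forever, then $u=p^\infty=\bv'$, which is excluded because $\bv'\notin[\bb,\bv')$.

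\emph{Step 4: $n$ odd.} The inclusion $\Omega(\ba,\bb)\subseteq\Omega(\ba,\bv')$ is immediate. Take $w\in\Omega(\ba,\bv')\setminus\Omega(\ba,\bb)$ and let $u=\sigma^k(w)$ be its first entry into $[\bv',\bb)$.
\begin{itemize}
\item If $u\neq\bv'$, then $u$ lies strictly between $\bv'$ and $\bb$, so it begins with $p$. Because $n$ is odd, $\sigma^n$ reverses order, giving
\[
\sigma^n(\bb)\prec\sigma^n(u)\prec\bv'.
\]
Hence $\sigma^n(u)\in(\ba,\bv')$, contradicting $w\in\Omega(\ba,\bv')$.
\item Therefore $\sigma^k(w)=\bv'$ for some $k$.
\end{itemize}
Conversely, any $\omega\in\Omega(\ba,\bv')$ with $\sigma^k(\omega)=\bv'$ satisfies $\sigma^k(\omega)\in(\ba,\bb)$, since $\bv'$ begins with $2$ and is $\prec\bb$. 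So $\omega\notin\Omega(\ba,\bb)$, and this gives the claimed equality of the difference set. Countability follows because each sequence has at most $2^k$ preimages under $\sigma^k$. I expect the main obstacle to be Step 1 together with the careful parity bookkeeping in Step 2.
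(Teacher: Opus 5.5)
Your Steps 2--4 are correct and follow the paper's proof essentially verbatim: the common prefix $p$, order preserved or reversed by $\sigma^n$ according to the parity of $n$, iteration forcing $u=p^\infty$ in the even case, and reduction to shift-preimages of $\bv'$ in the odd case.

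\textbf{The gap is in Step 1.} Your wrap-around mechanism compares only first symbols ($v_{n+1}=1$ against $v_1=2$). It then looks for a contradiction with the self-admissibility of $\bb$ alone, and that cannot suffice. Take $\bb=211(12)^\infty$ and $n=5$. Then $v_6=1$ and $p=21112$, and $\bb$ is self-admissible, since its only shifts beginning with $2$ are $\bb$ and $(21)^\infty$. Yet $\sigma^4(p^\infty)=2\,p^\infty\prec p^\infty$, with first difference at the even position $2$.

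What excludes such examples is the minimality of $n$ relative to $\ba$. You record this in your opening paragraph but never use it in Step 1. The critical subcase is
\[
v_{r+1}\cdots v_n=v_1\cdots v_{n-r},\qquad n-r\text{ odd},\qquad v_{n-r+1}=1 .
\]
There $\sigma^r(\bv')\prec\bv'$ holds automatically. The comparison of $\sigma^r(\bb)$ with $\bb$ is decided by $\sigma^n(\bb)$ against $\sigma^{n-r}(\bb)$, and both begin with $1$. The paper uses $\sigma^{n-r}(\bb)\preceq\ba\prec\sigma^n(\bb)$, where the first inequality comes from the minimality of $n$. This gives
\[
\sigma^r(\bb)\prec v_{r+1}\cdots v_n\ba=v_1\cdots v_{n-r}\ba\preceq v_1\cdots v_{n-r}\sigma^{n-r}(\bb)=\bb ,
\]
which contradicts self-admissibility because $v_{r+1}=2$.

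Relatedly, the case split should be on the parity of the matched length $n-r$, not of $r$. In the written proof of Lemma~\ref{lelele11}(1), $n$ is odd, so the two splits coincide there, but not here.

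The other odd subcase is $v_{n-r+1}=2$. There you get $\sigma^{n-r}(\bv')\prec\bv'$. If this comparison is decided within the first $r$ symbols, it transfers to $\sigma^{n-r}(\bb)\prec\bb$. Otherwise $p$ has both periods $r$ and $n-r$. By Fine--Wilf it then has period $\gcd(r,n)$, so $\sigma^r(\bv')=\bv'$, contrary to the choice of $r$.
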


\begin{proof}
The self-admissibility of $\bv'$ follows from the same periodization
argument as in Lemma~\ref{lelele11}; we record the short adaptation.
Assume that $\bv'$ is not self-admissible and choose the least
$1\le r<n$ such that $\sigma^r(\bv')$ starts with $2$ and
$\sigma^r(\bv')\prec\bv'$. If the comparison is decided before the
common block $v_1\cdots v_{n-r}$ is exhausted, the same comparison for
$\bb$ gives $\sigma^r(\bb)\prec\bb$, contradicting the
self-admissibility of $\bb$. Otherwise,
$v_{r+1}\cdots v_n=v_1\cdots v_{n-r}$. If $n-r$ is odd, the remaining
comparison gives $\sigma^{n-r}(\bb)\prec\bb$ when
$v_{n-r+1}=2$. When $v_{n-r+1}=1$, the assumptions give
\[
\sigma^r(\bb)\prec v_{r+1}\cdots v_n\ba
=v_1\cdots v_{n-r}\ba
\preceq v_1\cdots v_{n-r}\sigma^{n-r}(\bb)=\bb,
\]
where $\sigma^{n-r}(\bb)\preceq\ba$ follows from the minimality of $n$.
If $n-r$ is even, the comparison forces $v_{n-r+1}=2$ and directly gives
$\sigma^r(\bb)\prec\bb$.
Every alternative contradicts the self-admissibility of $\bb$.
Therefore $\bv'$ is self-admissible.

For the survivor spaces, put $\mathbf q=\sigma^n(\bb)$. Then
\[
\bb=p\mathbf q,\qquad \bv'=p\bv',\qquad
\ba\prec\mathbf q\prec\bv',
\]
where the last inequality holds because $\mathbf q$ starts with $1$
and $\bv'$ starts with $2$. Deleting the common prefix $p$ preserves
the alternating order when $n$ is even and reverses it when $n$ is odd.
Thus $\bv'\succ\bb$ in the even case and $\bv'\prec\bb$ in the odd case.

\par\emph{Even $n$.} Monotonicity gives
$\Omega(\ba,\bv')\subseteq\Omega(\ba,\bb)$. If
$\omega\in\Omega(\ba,\bb)\setminus\Omega(\ba,\bv')$, some shift
$u$ lies in $[\bb,\bv')$. Hence $u$ begins with $p$ and
$\mathbf q\preceq\sigma^n(u)\prec\bv'$. If
$\sigma^n(u)\prec\bb$, then $\sigma^n(u)\in(\ba,\bb)$, since
$\mathbf q\succ\ba$, contradicting survival. Otherwise
$\sigma^n(u)\in[\bb,\bv')$, and iteration gives the same conclusion
for every $n$-shift of $u$. Every aligned length-$n$ block of $u$ is
therefore $p$, so $u=p^\infty=\bv'$, contradicting $u\prec\bv'$. Thus
$\Omega(\ba,\bv')=\Omega(\ba,\bb)$.

\par\emph{Odd $n$.} Monotonicity gives
$\Omega(\ba,\bb)\subseteq\Omega(\ba,\bv')$. If
$\omega\in\Omega(\ba,\bv')\setminus\Omega(\ba,\bb)$, some shift
$u$ lies in $[\bv',\bb)$. Unless $u=\bv'$, deleting the common
odd-length prefix gives
$\mathbf q\prec\sigma^n(u)\prec\bv'$, hence
$\sigma^n(u)\in(\ba,\bv')$, a contradiction. Thus every such $\omega$
is a shift-preimage of $\bv'$. Conversely, every surviving
shift-preimage of $\bv'$ fails for $(\ba,\bb)$ because
$\ba\prec\bv'\prec\bb$. Therefore
\[
\Omega(\ba,\bv')\setminus\Omega(\ba,\bb)
=\bigl\{\omega\in\Omega(\ba,\bv'):
\sigma^k(\omega)=\bv'\text{ for some }k\ge0\bigr\},
\]
and this set is at most countable.
\end{proof}

\begin{lemma}\label{newlemma1}
Let $\ba$ and $\bb$ be self-admissible.
Then there exists a weak-admissible pair $(\bu,\bv)$ such that
$\Omega(\bu,\bv)\triangle \Omega(\ba,\bb)$ is at most countable.
\end{lemma}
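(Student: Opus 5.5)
Starting from the self-admissible pair $(\ba,\bb)$, we will iterate the two adjustment lemmas. At each stage we either stop with a weak-admissible pair or pass to a new self-admissible pair whose survivor space differs from the previous one by at most a countable set. Since a finite union of countable symmetric differences is countable, it is enough to show that the procedure terminates after finitely many steps, or else converges in a controlled way.

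\emph{Characterizing the failure.} Self-admissibility of $\bb$ means $\sigma^n(\bb)\notin(1(12)^\infty,\bb)$ for all $n$. So every shift of $\bb$ that begins with $2$ is $\succeq\bb$, and every shift beginning with $1$ automatically lies below $\bb$. Hence $\bb\notin\Omega(\ba,\bb)$ holds exactly when some shift $\sigma^n(\bb)$ begins with $1$ and is $\succ\ba$. Symmetrically, $\ba\notin\Omega(\ba,\bb)$ holds exactly when some shift of $\ba$ begins with $2$ and is $\prec\bb$. If neither failure occurs, $(\ba,\bb)$ is already weak-admissible and we take $(\bu,\bv)=(\ba,\bb)$.

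\emph{Fixing $\bb$.} Suppose $\bb$ fails. Take the least such $n$; the condition $v_{n+1}=1$ means the $n$-th shift begins with $1$. Lemma~\ref{lem:self-admissible-adjust} then replaces $\bb$ by the periodic self-admissible sequence $\bv'=(v_1\cdots v_n)^\infty$ with countable symmetric difference. We next check whether $\bv'\in\Omega(\ba,\bv')$. A shift $\sigma^r(\bv')$ with $r<n$ that begins with $1$ satisfies $\sigma^r(\bv')\succ\ba$ only if it agrees with $\sigma^r(\bb)$ far enough; by minimality of $n$ the comparison must be decided by the wrap-around block. So $\bv'$ has period $n$ and only finitely many distinct shifts to examine. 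If one of them still violates, we apply Lemma~\ref{lem:self-admissible-adjust} to $\bv'$, which yields a strictly shorter period $n'<n$. Therefore this loop terminates after at most $n$ steps.

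\emph{Fixing $\ba$.} Once $\bb$ has been replaced by some $\bv$ with $\bv\in\Omega(\ba,\bv)$, we apply the symmetric (left-endpoint) counterpart of Lemma~\ref{lem:self-admissible-adjust}, interchanging the roles of $1$ and $2$ and reversing the order, to $\ba$. This produces a periodic $\bu$ and again terminates by period decrease.

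\emph{Main obstacle: the interaction.} Changing $\ba$ to $\bu$ may destroy the weak-admissibility of $\bv$, and conversely. Two cases need attention.

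\begin{itemize}
\item In the even cases of the lemmas, the forbidden gap shrinks: $\bu\prec\ba$ or $\bv\succ\bb$. This can only help, since the condition $\bv\in\Omega(\bu,\bv)$ asks the shifts of $\bv$ to avoid a smaller interval. This needs a check, because the endpoint itself moved.
\item In the odd cases, the gap grows. Here we would argue via periodicity. Once both $\bu$ and $\bv$ are periodic, every further adjustment strictly decreases the sum of the two periods, so the alternation between fixing $\bb$ and fixing $\ba$ stops after finitely many rounds.
\end{itemize}

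The point to verify carefully is that each adjustment of one endpoint leaves the other endpoint self-admissible, which is needed for the lemma's hypothesis. Self-admissibility of $\bv$ refers only to $\bv$ itself, through the extremal sequence $1(12)^\infty$, so it is unaffected by changing $\ba$; the same holds for $\bu$. Termination then gives the required pair $(\bu,\bv)$, and the symmetric difference accumulated along the finite chain is countable.
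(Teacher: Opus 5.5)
Your proposal is correct and follows essentially the paper's argument. You iterate Lemma~\ref{lem:self-admissible-adjust} and its reflected counterpart, noting that self-admissibility of the untouched boundary is unaffected. Termination holds because each boundary becomes periodic at its first adjustment, and any later violation of a boundary of period $q$ occurs at a least time $r<q$, so its period strictly drops; the symmetric differences accumulated along this finite chain are countable.

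There is one slip, harmless because your period-decrease argument covers every case. The even adjustments enlarge the gap (with unchanged survivor space) and the odd ones shrink it, not the reverse. In fact every adjustment satisfies $\Omega(\text{new pair})\supseteq\Omega(\text{old pair})$, so a boundary that already survives keeps surviving, and the back-and-forth interaction you worried about never actually occurs.
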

\begin{proof}
Start with $(\ba_0,\bb_0)=(\ba,\bb)$. Whenever the current right
boundary fails to survive, choose its least violating time $n$.
Self-admissibility forces $\sigma^n(\bb_j)$ to start with $1$, so
Lemma~\ref{lem:self-admissible-adjust} replaces the pair by
\[
\ba_{j+1}=\ba_j,\qquad \bb_{j+1}=(\bb_j|_n)^\infty.
\]
Repeat with the left boundary fixed until the right boundary survives.
If the left boundary still fails to survive, adjust it by the symmetric
lemma, keeping the right one fixed and returning to right adjustment
whenever it ceases to survive. Stop when both boundaries survive.

Each adjustment preserves self-admissibility and changes the survivor
space by at most a countable set. The procedure is finite: the first
adjustment of either boundary makes it periodic, and a subsequent
adjustment of a boundary of least period $q$ occurs at a least violating
time $1\leq r<q$. Otherwise periodicity gives an earlier violation
modulo $q$; a multiple of $q$ returns the boundary itself and cannot
be a violating time. Its new period is at most $r<q$, so each boundary can be
adjusted only finitely often, even when the adjustments alternate.
The final pair $(\bu,\bv)$ is weak-admissible, and the finite union
of the successive symmetric differences is countable. If every
adjustment occurs at an even first-violating time,
Lemma~\ref{lem:self-admissible-adjust}\textup{(\ref{it:self-adjust-even})}
and its symmetric counterpart give the stronger equality
$\Omega(\bu,\bv)=\Omega(\ba,\bb)$.
\end{proof}

\begin{remark}\label{rererere2}
\begin{enumerate}
\item Lemma~\ref{newlemma1} gives at most countable symmetric difference
between $\Omega(\bu,\bv)$ and $\Omega(\ba,\bb)$. By
Lemma~\ref{lem:countable-diff-entropy},
$h_{\mathrm{top}}(\Omega(\bu,\bv))=h_{\mathrm{top}}(\Omega(\ba,\bb))$.
\item Each boundary moves monotonically in the alternating order
through its successive adjustments; neither boundary reverses direction.
\end{enumerate}
\end{remark}

\vspace{1em}

\begin{proof}[Proof of Theorem~\ref{thm:dlm-hole}]
For the first assertion, the hypotheses say precisely that the pair
$(\ba,\bb)$ is weak-admissible and has the strict jump ordering. Lemma \ref{lem:dlm-realization} stated above therefore gives a map $g$ with
$
\Omega(g)=\Omega(\ba,\bb).
$

For the converse, let $(k_-,k_+)$ be the kneading pair of $g$.
Monotonicity implies that every realized one-sided itinerary beginning
with $1$ is at most $k_-$, and one beginning with $2$ is at least $k_+$.
By the one-sided shift identities, these bounds hold for every shift, giving
$k_-,k_+\in\Omega(k_-,k_+)$ and
$\Omega(g)\subseteq\Omega(k_-,k_+)$.

For the reverse inclusion, the extremal sequences
$e_0=(12)^\infty$ and $e_1=(21)^\infty$ are realized by $0$ and $1$.
For $e_0\prec\omega\prec e_1$ in $\Omega(k_-,k_+)$, set
\[
x=\sup\{y\in[0,1]\setminus C(g):\tau_g(y)\prec\omega\}.
\]
Stability of finite endpoint prefixes gives $0<x<1$, and monotonicity
and the one-sided limits yield
\[
\tau_g(x-)\preceq\omega\preceq\tau_g(x+).
\]
If both inequalities were strict, $x$ would be precritical.
At its first arrival time $r$ at $c$, the two bounding itineraries
have a common prefix of length $r$ and tails $k_-,k_+$.
Thus $\omega$ shares that prefix; deleting it, with order reversed
when $r$ is odd, gives $\sigma^r(\omega)\in(k_-,k_+)$, a contradiction.
Hence $\omega$ is a one-sided itinerary of $x$, proving
$\Omega(g)=\Omega(k_-,k_+)$.

For the geometric assertion, set
$a=\pi_{-2}(k_-)$ and $b=\pi_{-2}(k_+)$, and assume
$0<a<1/2<b<1$. The critical sequences
$\ell_0=1(12)^\infty$ and $r_0=2(21)^\infty$ of $T_{-2}$ then satisfy
\[
k_-\prec\ell_0\prec r_0\prec k_+.
\]
Weak admissibility excludes these two sequences from every shift of
$k_-$ and $k_+$. Since each critical or precritical negative-doubling
sequence has such a shift, $a,b\notin C_{-2}$. Their unique itineraries
therefore give $\tau_{-2}(a-)=k_-$ and $\tau_{-2}(b+)=k_+$,
which, together with the symbolic equality, proves the hole representation.
For the canonical pair $(\ell_0,r_0)$, the same symbolic equality gives
the representation by $T_{-2}$ with the empty hole.
\end{proof}

Numerical addresses alone do not ensure the geometric conclusion of
Theorem~\ref{thm:dlm-hole}.
For example, $(k_-,k_+)=(11r_0,r_0)$ is weak-admissible and satisfies
$\sigma(k_-)\prec\sigma(k_+)$, so Lemma~\ref{lem:dlm-realization}
realizes it in $\mathrm{DLM}$. Its negative-doubling addresses are
$a=3/8$ and $b=1/2$, but
\[
\tau_{-2}\!\left(\frac38-\right)=11\ell_0\ne11r_0=k_-.
\]
Thus these numerical addresses do not recover the prescribed lower
boundary sequence: the symbolic equality remains valid, but the standard
hole $(3/8,1/2)$ does not give the claimed geometric representation.

By Lemmas~\ref{newlemma1} and~\ref{lem:dlm-realization}, a
self-admissible pair $(\ba,\bb)$ can be adjusted to $(\bu,\bv)$ with
countable survivor-space difference. If the adjusted pair also satisfies
$\sigma(\bu)\prec\sigma(\bv)$, it is realized by $g\in\mathrm{DLM}$,
and $\Omega(g)\triangle\Omega(\ba,\bb)$ is at most countable.

 \vspace{1em}
The following examples illustrate the parity distinction.
\begin{example}\label{exam1}
Consider the negative doubling map with a hole $(a,b)$.
\begin{enumerate}
\item Let $a=1/2$ and $\bb=(21112112)^\infty$. By Lemma~\ref{lelele11},
  there exists an even integer $n=4$ such that $\sigma^4(\bb)\prec\bb$
  (and $\sigma^4(\bb)$ begins with the symbol $2$). Hence we obtain
  $\bb'=(2111)^\infty\succ\bb$ and
  $\Omega(\ba,\bb)=\Omega(\ba,\bb')$.

\item Let $a=1/2$ and $\bb=(2211222111)^\infty$. By Lemma~\ref{lelele11},
  there exists an odd integer $n=5$ such that $\sigma^5(\bb)\prec\bb$
  (and $\sigma^5(\bb)$ begins with the symbol $2$). Hence we obtain
  $\bb'=(22112)^\infty\prec\bb$ and
  $\Omega(\ba,\bb')\setminus\Omega(\ba,\bb)$ is countable.

\item  Let $\ba=(112)^\infty$ and $\bb=(2211121)^\infty$.
  Both sequences are self-admissible, but $(\ba,\bb)$ is not
  weak-admissible. Indeed, $\sigma(\bb)$ starts with $2$,
  $\sigma^2(\bb)\prec\ba$, and $\sigma^3(\bb)\succ\ba$, with
  $\sigma^3(\bb)$ starting with $1$. Thus the first violating time
  in Lemma~\ref{lem:self-admissible-adjust} is $n=3$. By its odd case,
  Lemma~\ref{lem:self-admissible-adjust}\textup{(\ref{it:self-adjust-odd})},
  the adjusted boundary is $\bb'=(221)^\infty\prec\bb$, and
  \[
  \Omega(\ba,\bb')\setminus\Omega(\ba,\bb)
  =\bigl\{\omega\in\Omega(\ba,\bb'):
    \sigma^k(\omega)=\bb'\text{ for some }k\ge0\bigr\}.
  \]
  The pair $(\ba,\bb')$ is weak-admissible.
  In particular, $\bb'$ itself belongs to the displayed difference.
  Hence $\Omega(\ba,\bb)\subsetneq\Omega(\ba,\bb')$, and the
  difference is at most countable.
\end{enumerate}
\end{example}

\subsection{Proof of Theorem \ref{plateauxin}}\label{ss4.2}
We retain the symbolic plateau $P(\bb)$ among all sequences beginning
with $2$; geometric right-endpoint parameters remain in $[1/2,1)$.
\begin{lemma}\label{lee21}
 Let $a=1/2$ be fixed and let $\bb=(v_1v_2\cdots)$ be not self-admissible. Suppose $n$ is the
minimal integer such that $\sigma^n(\bb)\prec\bb$ with $v_{n+1}=2$.
Define $\bb'=(v_1\cdots v_n)^\infty$.
\begin{enumerate}
\item If $n$ is even, then $P(\bb)=[\bb_L,\bb']$, where
  $\bb_L=v_1\cdots v_n\,2(21)^\infty$.
  \item  If $n$ is odd, then $P(\bb)=(\bb',\bb_R]$, where
  $\bb_R=v_1\cdots v_n\,1(12)^\infty$.
\end{enumerate}
\end{lemma}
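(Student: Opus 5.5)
The plan is to use that the plateau is an order-interval, and then to identify its two endpoints by a renormalization argument through the block $p:=v_1\cdots v_n$.

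\emph{Convexity and the lower boundary.} With $a=1/2$ and the left completion we have $\ba=\ell_0=1(12)^\infty$, which is the largest sequence beginning with $1$. Every $\epsilon\in2\{1,2\}^{\mathbb N}$ is at least the smallest sequence beginning with $2$, namely $r_0$. Hence
\[
\Omega(\ba,\epsilon)=\{\omega:\ \sigma^k\omega\succeq\epsilon \text{ whenever } \sigma^k\omega \text{ begins with } 2\}.
\]
If $\epsilon_1\preceq\epsilon_2$, then $(\ba,\epsilon_1)\subseteq(\ba,\epsilon_2)$, so $\Omega(\ba,\epsilon_2)\subseteq\Omega(\ba,\epsilon_1)$. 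Consequently $P(\bb)$ is convex in the alternating order: if $\epsilon_1\preceq\epsilon\preceq\epsilon_2$ with $\epsilon_1,\epsilon_2\in P(\bb)$, then $\Omega(\ba,\epsilon)$ is squeezed between two equal sets. It therefore suffices to show that each claimed endpoint lies in $P(\bb)$, or is excluded from it where the interval is half-open, and that every point beyond an endpoint leaves the plateau.

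\emph{Even $n$.} Lemma~\ref{lelele11}(2) already gives $\bb'\in P(\bb)$. For $\epsilon$ with $\bb'\prec\epsilon$, the sequence $\bb'$ is self-admissible by Lemma~\ref{lelele11}(1), so $\bb'\in\Omega(\ba,\bb')$. On the other hand $\bb'\in(\ba,\epsilon)$, so $\bb'\notin\Omega(\ba,\epsilon)$, and $\epsilon$ is not in the plateau.

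For the lower end I will rerun the argument of Lemma~\ref{lelele11}(2), with $\epsilon\in[\bb_L,\bb']$ in place of $\bb$. Every such $\epsilon$ begins with $p$, and since $n$ is even, deleting $p$ preserves the order. Thus $T:=\sigma^n$ satisfies $T(\epsilon)\succeq 2(21)^\infty=r_0$, so $T(\epsilon)$ begins with $2$. This is exactly the property used in that proof, and the inequality $T(\bb)\prec\bb$ is not needed. Take $u\in[\epsilon,\bb')$. Either $T(u)\prec\epsilon$, in which case $T(u)\succeq T(\epsilon)$ begins with $2$ and so $T(u)\in(\ba,\epsilon)$. Or $T(u)\in[\epsilon,\bb')$, and we iterate. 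Since $u\neq p^\infty=\bb'$, the iteration must eventually fall into the first case. This shows $\Omega(\ba,\epsilon)=\Omega(\ba,\bb')=\Omega(\ba,\bb)$.

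For $\epsilon\prec\bb_L$, either $\epsilon$ leaves the cylinder of $p$, or $T(\epsilon)$ begins with $1$. In both cases I need a witness in $\Omega(\ba,\epsilon)\setminus\Omega(\ba,\bb')$. I would first try $\omega=p\,r_0$ (or $\omega=\bb_L$ itself), checking that its shifts beginning with $2$ are $\succeq\epsilon$ but that $\omega\in(\ba,\bb')$. If $\epsilon$ lies outside the cylinder of $p$, I would instead take the witness to be a suitable sequence just above $\epsilon$ built from a prefix of $\epsilon$ followed by $r_0$.

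\emph{Odd $n$.} This case is the mirror image. By Lemma~\ref{lelele11}(3) we have $\bb'\prec\bb$. Moreover $\bb'\in\Omega(\ba,\bb')$, while $\bb'\in(\ba,\bb)$, so $\bb'\notin\Omega(\ba,\bb)$. Hence $\bb'\notin P(\bb)$, and by monotonicity no $\epsilon\preceq\bb'$ is in $P(\bb)$.

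For $\epsilon\in(\bb',\bb_R]$, the sequence $\epsilon$ begins with $p$, and because deletion of the odd prefix $p$ reverses the order, $\sigma^n(\epsilon)\succeq1(12)^\infty$ forces $\sigma^n(\epsilon)$ to begin with $1$. I would then adapt the proof of Lemma~\ref{lem:self-admissible-adjust}(\ref{it:self-adjust-odd}) to show that $\Omega(\ba,\epsilon)$ does not depend on $\epsilon$ in this range, and in particular equals $\Omega(\ba,\bb)$. The key point is that a sequence $u$ between two such values of $\epsilon$ has $\sigma^n u$ landing in $(\ba,\epsilon)$, unless $u$ is an endpoint already handled. For $\epsilon\succ\bb_R$, I would use the witness $\bb_R$ or $p\,\ell_0$, exactly as in the even case.

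\emph{Main obstacle.} I expect the hardest step to be the exclusion of points just beyond the closed endpoints, namely $\epsilon\prec\bb_L$ in the even case and $\epsilon\succ\bb_R$ in the odd case. There one must produce an explicit sequence that survives for $\epsilon$ but not for $\bb$, and verify that all of its shifts beginning with $2$ stay $\succeq\epsilon$. This verification needs the self-admissibility structure of $p$ guaranteed by Lemma~\ref{lelele11}(1).
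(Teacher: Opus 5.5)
Your convexity reduction, the proof that $[\bb_L,\bb']\subseteq P(\bb)$, the exclusion of $\epsilon\succ\bb'$ in the even case, and the exclusion of $\epsilon\preceq\bb'$ in the odd case are sound and essentially follow the paper. The gap is exactly at the step you flag as the main obstacle, and the witnesses you propose there do not work. For even $n$ and $\epsilon\prec\bb_L$ you suggest $p\,r_0=\bb_L$, or a prefix of $\epsilon$ followed by $r_0$. No sequence with tail $r_0=2(21)^\infty$ can survive for such $\epsilon$. The reason is that $r_0$ is the least sequence beginning with $2$, so $\ell_0\prec r_0\prec\epsilon$ for every $\epsilon\in2\{1,2\}^{\mathbb N}$ other than $r_0$, and the shift equal to $r_0$ lands in the hole. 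In fact $\sigma^n(\bb_L)=r_0\prec\bb_L$, so $\bb_L\notin\Omega(\ell_0,\bb_L)$. Thus the one nontrivial half of the even case, that nothing below $\bb_L$ lies in $P(\bb)$, is left unproved.

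The missing ingredient is the right witness, or an argument that replaces it. Using the tail $\ell_0$ works. Since $\ell_0,r_0$ are adjacent and $n$ is even, $p\ell_0$ is the immediate predecessor of $\bb_L=p\,r_0$. Hence $\epsilon\prec\bb_L$ if and only if $\epsilon\preceq p\ell_0$, with no case split on the cylinder of $p$; moreover $p\ell_0\in(\ell_0,\bb)$. If $p\ell_0$ is self-admissible, then $p\ell_0\in\Omega(\ell_0,p\ell_0)\subseteq\Omega(\ell_0,\epsilon)$ but $p\ell_0\notin\Omega(\ell_0,\bb)$.

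This self-admissibility is also what your odd-case witness $\bb_R=p\ell_0$ needs. It comes from the minimality of $n$, not from Lemma~\ref{lelele11}(1). Take $1\le j<n$ with $v_{j+1}=2$ and put $h=n-j$. Either $\sigma^j(\bb)$ and $\bb$ already differ within their first $h$ symbols, and then the strict inequality $\sigma^j(\bb)\succ\bb$ transfers verbatim to $p\ell_0$. Or $\sigma^j(\bb)=v_1\cdots v_h\,\sigma^n(\bb)$, and then minimality forces $v_{h+1}=1$ for even $h$ and $v_{h+1}=2$ for odd $h$; each gives $\sigma^j(p\ell_0)\succeq p\ell_0$. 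The shifts from time $n$ on are $\ell_0,(12)^\infty,(21)^\infty$ and are harmless. Self-admissibility of $p^\infty$ alone is not enough: for $p=2121$, $p^\infty=(21)^\infty$ is self-admissible, yet $\sigma^2(p\ell_0)\prec p\ell_0$.

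The paper avoids an explicit witness at this step. It uses that a self-admissible right boundary is the least sequence beginning with $2$ in its survivor space. It normalizes $\epsilon$ at its first violating time $m$ and shows the normalization cannot equal $\bb'$. For odd $m$, $(\epsilon|_m)\ell_0$ is nonperiodic. For even $m$, both $m$ and $n$ are the least even period of $\bb'$, which forces $m=n$ and $\epsilon\succeq\bb_L$.

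There is also a minor slip in your odd case. The condition $\sigma^n(\epsilon)\succeq\ell_0$ means $\sigma^n(\epsilon)=\ell_0$ or $\sigma^n(\epsilon)$ begins with $2$, not with $1$. The inclusion you actually need, $\ell_0\prec\sigma^n(u)\prec\bb'\prec\epsilon$ for $u\in[\epsilon,\bb_R)$, does hold.
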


\begin{proof}
Put $p=v_1\cdots v_n$, $\ell_0=1(12)^\infty$ and
$r_0=2(21)^\infty$, so $\ba=\ell_0$. Right boundary parameters
always use upper sequences. A self-admissible right boundary is the least
sequence beginning with $2$ in its survivor space, which therefore
determines it uniquely.
We first establish two observations for a non-self-admissible
right boundary $s=(s_i)_{i\geq1}\in2\{1,2\}^{\mathbb N}$.
Let $r$ be the least positive integer such that $s_{r+1}=2$ and
$\sigma^r(s)\prec s$. These observations will be used in both parity
cases below.

If $r$ is even, it is the least even period of
$z=(s|_r)^\infty$. Indeed, $z\succ s$ by Lemma~\ref{lelele11}.
A smaller even period $e<r$ would give $s_{e+1}=2$ and
$\sigma^e(s)\prec s$: the first mismatch between $s$ and $z$ moves
$e$ places to the left without changing parity, contradicting minimality.

If $r$ is odd, set $R=(s|_r)\ell_0$.
Then $s\preceq(s|_r)r_0\prec R$ and
$\Omega(\ell_0,s)=\Omega(\ell_0,R)$: a shift $u\in[s,R)$ satisfies
$\ell_0\prec\sigma^r(u)\preceq\sigma^r(s)\prec s$.
To check self-admissibility of $R$, consider $1\leq j<r$ with
$s_{j+1}=2$, and put $h=r-j$, $q=\sigma^r(s)$. If the length-$h$
prefixes of $\sigma^j(s)$ and $s$ differ, their comparison is unchanged
for $R$. Otherwise minimality of $r$ gives
\[
\begin{cases}
q\succeq\sigma^h(s),\quad s_{h+1}=1,&h\text{ even},\\
q\preceq\sigma^h(s),\quad s_{h+1}=2,&h\text{ odd}.
\end{cases}
\]
Comparing $\ell_0$ with $\sigma^h(R)$ and restoring the common prefix
gives $\sigma^j(R)\succeq R$. All later shifts are $\ell_0$,
$(12)^\infty$ or $(21)^\infty$. Thus $R$ is self-admissible and
nonperiodic.

\par\noindent\textup{(1)} Let $n$ be even. Lemma~\ref{lelele11} gives
\[
\bb_L=pr_0\preceq\bb\prec\bb'=p^\infty,\qquad
\Omega(\ell_0,\bb)=\Omega(\ell_0,\bb'),\qquad
\bb'\in\Omega(\ell_0,\bb').
\]
For $\epsilon\in[\bb_L,\bb']$, monotonicity gives one inclusion.
If a sequence survived for $\epsilon$ but not for $\bb'$, some shift
$u$ would lie in $[\epsilon,\bb')$. Deleting its even prefix $p$ gives
$r_0\preceq\sigma^n(u)\prec\bb'$, and survival forces
$\sigma^n(u)\succeq\epsilon$. Repetition forces $u=p^\infty$,
a contradiction. Hence $[\bb_L,\bb']\subseteq P(\bb)$.

Suppose $w\prec\bb_L$ gave the same survivor space. It cannot be
self-admissible, since it would then belong to $\Omega(\ell_0,\bb')$.
Let $m$ be its first violating time. If $m$ is even,
Lemma~\ref{lelele11} and  uniqueness of the self-admissible boundary  give
$(w|_m)^\infty=\bb'$. Both $m$ and $n$ are its least even period,
so $m=n$. Hence $w$ has prefix $p$ and $w_{n+1}=2$, giving
$w\succeq pr_0=\bb_L$, a contradiction. If $m$ is odd, the odd
adjustment above yields the nonperiodic self-admissible boundary
$(w|_m)\ell_0$ with the same survivor space, which would have to equal
the periodic boundary $\bb'$, again impossible. Finally, $w\succ\bb'$
excludes $\bb'$ itself from the survivor space. Therefore
$P(\bb)=[\bb_L,\bb']$.

\par\noindent\textup{(2)} Let $n$ be odd and put $\bb_R=p\ell_0$,
the upper endpoint sequence. The odd adjustment gives
$\Omega(\ell_0,\bb)=\Omega(\ell_0,\bb_R)$ and
$\bb_R\in\Omega(\ell_0,\bb_R)$, while
Lemma~\ref{lelele11} gives $\bb'\prec\bb$ and self-admissibility of
$\bb'$. Hence monotonicity gives
\[
\Omega(\ell_0,\bb)=\Omega(\ell_0,\bb_R)
\subsetneq\Omega(\ell_0,\bb').
\]
The inclusion is strict because $\bb'$ survives for $\bb'$ but lies
in the forbidden interval $(\ell_0,\bb)$.
For $\epsilon\in(\bb',\bb_R]$, a shift
$u\in[\epsilon,\bb_R)$ would satisfy
$\ell_0\prec\sigma^n(u)\prec\bb'\prec\epsilon$. Thus no
sequence survives for $\epsilon$ but not for $\bb_R$, and monotonicity
gives $(\bb',\bb_R]\subseteq P(\bb)$. For $w\preceq\bb'$,
the sequence $\bb'$ survives for $w$ but not for $\bb$; for
$w\succ\bb_R$, the sequence $\bb_R$ survives for $\bb$ but not
for $w$. Therefore $P(\bb)=(\bb',\bb_R]$.
\end{proof}

Lemma~\ref{lee21} describes the plateau in the special case
$a=1/2$ when $\bb$ is not self-admissible.
We now consider a general fixed left boundary $\ba$ and study
$P(\bb)$ under the assumption that $(\ba,\bb)$ is weak-admissible.

\begin{proposition}\label{prop:boundary-return-plateaux}
Fix $\ba$ and suppose that $(\ba,\bb)$ is weak-admissible.
Define
\[
n:=\min\{2j:j\geq1,\ \sigma^{2j}(\bb)=\bb\},
\qquad
m:=\min\{2j+1:j\geq0,\ \sigma^{2j+1}(\bb)=\ba\},
\]
with  $\min\varnothing=+\infty$, and put
$k:=\min\{n,m\}$.
If $k<+\infty$, put $w=\bb|_k$ and define
\[
L=
\begin{cases}
w\ba,&k\text{ even},\\
w^\infty,&k\text{ odd}.
\end{cases}
\]
Then $L\in\Omega(\ba,L)$ and $P(\bb)=(L,\bb]$.
\end{proposition}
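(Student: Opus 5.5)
The plan is to first rewrite $\bb$ in terms of $w=\bb|_k$ and then verify three inclusions in turn. If $k=n$ is even, then $\sigma^k(\bb)=\bb$, so $\bb=w^\infty$ and $L=w\ba$. If $k=m$ is odd, then $\sigma^k(\bb)=\ba$, so $\bb=w\ba$ and $L=w^\infty$. In both cases $L$ and $\bb$ share the prefix $w$, so their comparison reduces to comparing tails. For $k$ even, deleting $w$ preserves the order, and $\ba\prec\bb$ gives $L\prec\bb$. For $k$ odd, deleting $w$ reverses the order, and $w^\infty\succ\ba$ (it starts with $2$) again gives $L\prec\bb$. Moreover $\ba\prec L$, since $L$ begins with $2$. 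Hence every sequence strictly between $L$ and $\bb$ begins with $w$.

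The first step is to show $L\in\Omega(\ba,L)$, and I expect this to be the main obstacle. The tail shifts are straightforward. For $k$ even, the shifts $\sigma^{k+i}(L)=\sigma^i(\ba)$ avoid $(\ba,\bb)\supseteq(\ba,L)$ by weak admissibility. For $k$ odd, $L$ is periodic, so only the shifts $0\le j<k$ need checking. For those $j<k$, I would compare $\sigma^j(L)$ with $\sigma^j(\bb)$, which avoids $(\ba,\bb)$. The two sequences agree on the first $k-j$ symbols. If $\sigma^j(\bb)$ differs from $\ba$ or from $L$ before position $k-j$, the same comparison transfers to $\sigma^j(L)$. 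Otherwise the comparison is decided in the tail, and I would use the minimality of $k$ to exclude the two bad coincidences. These are an even return $\sigma^j(\bb)$ reproducing $\bb$'s prefix with the wrong parity, and an odd return to $\ba$. The resulting parity case analysis is modelled on part (1) of Lemma~\ref{lelele11} and on the proof of Lemma~\ref{lem:self-admissible-adjust}.

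The second step is to show $(L,\bb]\subseteq P(\bb)$. Fix $\epsilon\in(L,\bb]$. Monotonicity gives $\Omega(\ba,\bb)\subseteq\Omega(\ba,\epsilon)$. Conversely, suppose $\omega\in\Omega(\ba,\epsilon)$ has a shift $u\in[\epsilon,\bb)$; then $u$ begins with $w$. In the odd case, reversal under $\sigma^k$ gives $\ba=\sigma^k(\bb)\prec\sigma^k(u)\prec\sigma^k(L)=L\prec\epsilon$, a contradiction. In the even case, $\sigma^k$ preserves the order, so $\ba\prec\sigma^k(u)\prec\bb$. Survival then forces $\sigma^k(u)\in[\epsilon,\bb)$, and iterating as in Lemma~\ref{lelele11}(2) shows that every aligned $k$-block of $u$ equals $w$. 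This gives $u=w^\infty=\bb$, contradicting $u\prec\bb$.

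The third step is to exclude everything outside $(L,\bb]$. If $\epsilon\preceq L$, then $L\in\Omega(\ba,\epsilon)$, because its shifts avoid $(\ba,L)\supseteq(\ba,\epsilon)$ by the first step. On the other hand $L\in(\ba,\bb)$, so $L\notin\Omega(\ba,\bb)$. If $\epsilon\succ\bb$, then $\bb\in\Omega(\ba,\bb)$ by weak admissibility, but $\bb\in(\ba,\epsilon)$, so $\bb\notin\Omega(\ba,\epsilon)$. Hence $P(\bb)=(L,\bb]$.
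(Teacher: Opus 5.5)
Your overall architecture is the paper's. You establish $\ba\prec L\prec\bb$ with common prefix $w$, prove $L\in\Omega(\ba,L)$, and show $(L,\bb]\subseteq P(\bb)$ by deleting $w$: block iteration in the even case, a single reversal in the odd case. You then exclude $\epsilon\preceq L$ using $L$ and $\epsilon\succ\bb$ using $\bb$. Your second and third steps are correct and follow the paper's Steps~2--4 essentially line by line.

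The gap is in the first step, which you yourself flag as the main obstacle. You leave it as a plan, and the plan as stated does not close. The trouble is the clause ``otherwise the comparison is decided in the tail''.

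\begin{itemize}
\item Take $1\le j<k$ with $\sigma^j(\bb)\succeq\bb$ and $w_{j+1}\cdots w_k=w_1\cdots w_{k-j}$. Then $\sigma^j(L)=w_1\cdots w_{k-j}\,\sigma^k(L)$ and $L=w_1\cdots w_{k-j}\,\sigma^{k-j}(L)$.
\item So the tail comparison is between $\sigma^k(L)\in\{\ba,L\}$ and $\sigma^{k-j}(L)$. That is another shift of $L$, exactly the kind of object you are trying to control, and weak admissibility says nothing about it.
\item Minimality of $k$ does not exclude this prefix coincidence either. It only forbids the exact equalities $\sigma^{2i}(\bb)=\bb$ and $\sigma^{2i+1}(\bb)=\ba$ below $k$.
\item Prefix coincidences do occur. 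For $\ba=(112)^\infty$, $\bb=(221)^\infty$ one has $k=6$, $w=221221$ and $w_4w_5w_6=w_1w_2w_3$.
\end{itemize}

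Your route can be completed, but it needs a second pass that is not in the proposal. You must compare $\sigma^{k-j}(L)$ with $\sigma^{k-j}(\bb)$, which share a prefix of length $j$, and use $\sigma^{k-j}(\bb)\notin(\ba,\bb)$. Where needed, you must invoke minimality at the index $k-j$, not only at $j$. When $k$ and $j$ are both even, you still need one further comparison with $\ba$.

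The paper avoids this recursion with an order-interval argument. For $1\le j<k$, note that $\sigma^j(L)\prec\sigma^j(\bb)$ exactly when $j$ is even. Let $J_j$ be the order interval between them, excluding $\sigma^j(L)$ and including $\sigma^j(\bb)$. The argument then runs as follows.

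\begin{itemize}
\item Every element of $J_j$ begins with $w_{j+1}\cdots w_k$.
\item Deleting this prefix maps $J_j$ into $(\ba,\bb]$ when $k$ is even, and into $[\ba,L)$ when $k$ is odd.
\item Hence any $x\in J_j\cap\Omega(\ba,\bb)$ satisfies $\sigma^{k-j}(x)=\sigma^k(\bb)$, so $x=\sigma^j(\bb)$.
\item Now suppose $\sigma^j(L)\in(\ba,L)$. Since $\sigma^j(\bb)$ avoids the hole, $\bb$ (for $j$ even) or $\ba$ (for $j$ odd) lies in $J_j\cap\Omega(\ba,\bb)$.
\item This forces $\sigma^j(\bb)=\bb$ or $\sigma^j(\bb)=\ba$ with $j<k$, contradicting the minimality of $n$ or $m$.
\end{itemize}

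This handles all $j<k$ uniformly, with contradictions at the index $j$ itself and no nested tail comparisons.
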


\begin{proof}
Assume $k<+\infty$ and put $S=\Omega(\ba,\bb)$. Since $\bb\in S$, every right boundary
$\eta\succ\bb$ excludes $\bb$, so $P(\bb)$ cannot extend to the
right of $\bb$.

\par \emph{1. $L\in\Omega(\ba,L)$ holds for both cases.}
Since $\ba\prec L\prec\bb$, monotonicity gives
$S\subseteq\Omega(\ba,L)$. If $k$ is even, then
$\sigma^k(L)=\ba$ and all later shifts are shifts of $\ba$;
if $k$ is odd, then $\sigma^k(L)=L$. It remains to check
$1\leq j<k$. Let $w=\bb|_k=w_1\cdots w_k$.
For such $j$, the sequences $\sigma^j(L)$ and
$\sigma^j(\bb)$ share the prefix $w_{j+1}\cdots w_k$. Define
\[
J_j=
\begin{cases}
\bigl(\sigma^j(L),\sigma^j(\bb)\bigr],&j\text{ even},\\
\bigl[\sigma^j(\bb),\sigma^j(L)\bigr),&j\text{ odd}.
\end{cases}
\]
The common-prefix cylinder implies that every
$x\in J_j$ has this prefix. Deleting it gives
\[
\sigma^{k-j}(x)\in
\begin{cases}
(\ba,\bb],&k\text{ even},\\
[\ba,L),&k\text{ odd}.
\end{cases}
\]
If $x\in J_j\cap S$, shift-invariance and avoidance of $(\ba,\bb)$
force $\sigma^{k-j}(x)=\sigma^k(\bb)$. On the other hand, both
$x$ and $\sigma^j(\bb)$ begin with the common word
$w_{j+1}\cdots w_k$. After this common prefix is deleted, the tail
of $x$ is $\sigma^{k-j}(x)=\sigma^k(\bb)$, while the tail of
$\sigma^j(\bb)$ is also $\sigma^k(\bb)$. Hence the two sequences have
the same prefix and the same tail, so
\[
x=(w_{j+1}\cdots w_k)\sigma^k(\bb)=\sigma^j(\bb).
\]
Therefore $J_j\cap S=\{\sigma^j(\bb)\}$.

We now use this claim to exclude the possibility that \(\sigma^j(L)\in(\ba,L)\). Suppose first
that $j$ is even and $\sigma^j(L)\in(\ba,L)$. Put
$q_j=\sigma^j(\bb)$. Since $q_j\in S$ and
$q_j\succeq\sigma^j(L)\succ\ba$, it cannot lie in the hole
$(\ba,\bb)$; hence $q_j\succeq\bb$. Thus $\bb\in J_j\cap S$, and the
identity just proved gives $q_j=\bb$, i.e.
$\sigma^j(\bb)=\bb$. This is an even self-return with $j<k\le n$,
contradicting the definition of $n$.

If $j$ is odd and $\sigma^j(L)\in(\ba,L)$, then
$q_j=\sigma^j(\bb)\prec\sigma^j(L)\prec\bb$. Since $q_j\in S$, it
cannot lie in $(\ba,\bb)$, so $q_j\preceq\ba$. Thus
$\ba\in J_j\cap S$, and the identity above gives $q_j=\ba$, i.e.
$\sigma^j(\bb)=\ba$. This is an odd arrival with $j<k\le m$,
contradicting the definition of $m$. Hence
$\sigma^j(L)\notin(\ba,L)$ for every $1\le j<k$, and therefore
$L\in\Omega(\ba,L)$.

\par \emph{2. Even self-return and the plateau.}
Assume $k=n<+\infty$, so that $n$ is even and
$\sigma^n(\bb)=\bb$. Then $w=\bb|_n$, $L=w\ba$, and
$\sigma^n(L)=\ba$. Since $\ba\prec L\prec\bb$, monotonicity gives
$S\subseteq\Omega(\ba,\epsilon)$ for every $\epsilon\preceq\bb$.
The endpoint $\epsilon=\bb$ belongs to $P(\bb)$ by definition. It
remains to consider $L\prec\epsilon\prec\bb$. Suppose, for a
contradiction, that
$\omega\in\Omega(\ba,\epsilon)\setminus S$. Choose a shift
$u=\sigma^r(\omega)\in[\epsilon,\bb)$. Since $u$ begins with the
common even prefix $w$,
\[
\ba\prec\sigma^n(\epsilon)\preceq\sigma^n(u)\prec
\sigma^n(\bb)=\bb.
\]
If $\sigma^n(u)\prec\epsilon$, then
$\sigma^n(u)\in(\ba,\epsilon)$, contradicting
$\omega\in\Omega(\ba,\epsilon)$. Hence
$\sigma^n(u)\in[\epsilon,\bb)$. Repeating this argument gives
$\sigma^{jn}(u)\in[\epsilon,\bb)$ for every $j\ge0$, so every aligned
length-$n$ block of $u$ equals $w$. Thus $u=w^\infty=\bb$, contradicting
$u\prec\bb$. Therefore $\Omega(\ba,\epsilon)=S$, and
$(L,\bb]\subseteq P(\bb)$.
\par \emph{3. Odd arrival at the left boundary and the plateau.}
Assume that $k=m<+\infty$, so that $m$ is odd and
$\sigma^m(\bb)=\ba$. Then $w=\bb|_m$, $L=w^\infty$, and
$\sigma^m(L)=L$. If $L\prec\epsilon\prec\bb$ and a sequence
survived for $\epsilon$ but not for $\bb$, a shift $u\in[\epsilon,\bb)$
would, after deleting the common odd prefix $w$, satisfy
$\ba\prec\sigma^m(u)\prec L\prec\epsilon$, contradicting survival.
Therefore $(L,\bb]\subseteq P(\bb)$.

\par \emph{4. Exact plateau for both cases.} Finally, for every right boundary $\eta\preceq L$, the sequence
$L$ survives for $(\ba,\eta)$ but not for $(\ba,\bb)$, since
$\ba\prec L\prec\bb$. Together with the interval inclusion
already proved and the exclusion of $\eta\succ\bb$,  this gives
$P(\bb)=(L,\bb].$
\end{proof}

\begin{example}\label{ex:two-boundary-return-plateaux}
Fix $\ba=(112)^\infty$ and consider
\[
\bb_1=(221)^\infty,\qquad
\bb_2=221(112)^\infty.
\]
Both pairs $(\ba,\bb_i)$ are weak-admissible and illustrate the two
finite-return cases of Proposition~\ref{prop:boundary-return-plateaux}.
\begin{enumerate}
\item\label{it:boundary-return-even-example}
For $\bb_1=(221)^\infty$, one has
$\sigma(\bb_1)=(212)^\infty\succ\bb_1$ and
$\sigma^2(\bb_1)=(122)^\infty\prec\ba$.
Its least period is $3$, so the least even self-return is $n=6$;
no shift equals $\ba$, giving $m=+\infty$. Thus $w=221221$ and
\[
P(\bb_1)=\bigl(221221(112)^\infty,\,(221)^\infty\bigr].
\]
Although the least period is odd, the relevant return time is even,
and the left endpoint is $L_1=w\ba$.

\item For $\bb_2=221(112)^\infty$, one has
$\sigma(\bb_2)=21(112)^\infty\succ\bb_2$,
$\sigma^2(\bb_2)=1(112)^\infty\prec\ba$, and
$\sigma^3(\bb_2)=\ba$. The block $22$ occurs only initially, so
$\bb_2$ is not periodic. Hence $n=+\infty$, $m=3$, $w=221$, and
\[
P(\bb_2)=\bigl((221)^\infty,\,221(112)^\infty\bigr].
\]
\end{enumerate}
Thus $L_2=(221)^\infty=\bb_1$ is the included right endpoint of the first plateau
and the excluded left endpoint of the second. Moreover,
$\bb_1\in\Omega(\ba,\bb_1)\setminus\Omega(\ba,\bb_2)$, so
$\Omega(\ba,\bb_2)\subsetneq\Omega(\ba,\bb_1)$.
\end{example}

We next treat the complementary case $k=+\infty$.

\begin{proposition}\label{prop:singleton-plateau}
Suppose that $(\ba,\bb)$ is weak-admissible, and let $k$ be as defined
in Proposition~\ref{prop:boundary-return-plateaux}. Then
\[
P(\bb)=\{\bb\}
\quad\Longleftrightarrow\quad k=+\infty.
\]
\end{proposition}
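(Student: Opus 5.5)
The direction $k<+\infty\Rightarrow P(\bb)\neq\{\bb\}$ is immediate from Proposition~\ref{prop:boundary-return-plateaux}. There $P(\bb)=(L,\bb]$ with $L\prec\bb$, and this half-open interval of sequences contains more than one point: for instance, every sequence sharing a sufficiently long prefix with $\bb$ and lying below it belongs to $(L,\bb]$. So the work is the converse. Assume $k=+\infty$, i.e. $\sigma^{2j}(\bb)\neq\bb$ for all $j\ge1$ and $\sigma^{2j+1}(\bb)\neq\ba$ for all $j\ge0$. We show that no $\epsilon\neq\bb$ beginning with $2$ gives $\Omega(\ba,\epsilon)=\Omega(\ba,\bb)$.

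If $\epsilon\succ\bb$, then $\bb\in\Omega(\ba,\bb)$ by weak-admissibility, but $\bb\in(\ba,\epsilon)$. So $\bb$ is excluded and the spaces differ; this is the same remark as at the start of the proof of Proposition~\ref{prop:boundary-return-plateaux}.

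For $\epsilon\prec\bb$ the plan is to exhibit a sequence surviving for $(\ba,\epsilon)$ that enters $[\epsilon,\bb)$ under some shift. Let $N$ be the length of the common prefix of $\epsilon$ and $\bb$. Natural candidates are $L_N:=(\bb|_N)\ba$ for $N$ even and $(\bb|_N)^\infty$ for $N$ odd, mimicking the definition of $L$, with $N$ large. More robustly, I would choose an index $N$ beyond the first disagreement with $\epsilon$ and take $x=(\bb|_N)\,t$, where the tail $t\in\{\ba,\bb\}$ is selected by parity so that $x\prec\bb$ while $x\succeq\epsilon$. Then $x\in[\epsilon,\bb)$, so $x\notin\Omega(\ba,\bb)$ provided also $x\succ\ba$, which holds since $x$ starts with $2$. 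It remains to verify $x\in\Omega(\ba,\epsilon)$. For this I would reuse Step~1 of the proof of Proposition~\ref{prop:boundary-return-plateaux}, the $J_j$-argument: for $1\le j<N$, $\sigma^j(x)$ and $\sigma^j(\bb)$ share the prefix $\bb_{j+1}\cdots\bb_N$. If $\sigma^j(x)$ fell into $(\ba,\epsilon)\subseteq(\ba,\bb)$, the argument forces $\sigma^j(\bb)=\bb$ with $j$ even, or $\sigma^j(\bb)=\ba$ with $j$ odd. Both are excluded by $k=+\infty$. Shifts from $N$ on are shifts of $\ba$ or $\bb$, which avoid $(\ba,\bb)$ by weak-admissibility.

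The main obstacle is the choice of $N$ and tail: the parity of $N$ and the order of the tail must make $x$ land in $[\epsilon,\bb)$ rather than below $\epsilon$. I would first choose $N$ larger than the first disagreement index with $\epsilon$, so that $x\succeq\epsilon$ is automatic. Then, for each parity of $N$, I would pick the tail $\ba$ or $\bb$ that keeps $x\prec\bb$: by the alternating order, for $N$ even take tail $\ba\prec\sigma^N(\bb)$, and for $N$ odd take tail $\bb\succ\sigma^N(\bb)$. Both choices need $\sigma^N(\bb)\ne\ba,\bb$, which $k=+\infty$ supplies for the relevant parity; if necessary, adjust $N$ by one to fix the parity.
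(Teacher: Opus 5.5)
Your first direction matches the paper's. The converse has a genuine gap at the step $x\in\Omega(\ba,\epsilon)$, which you justify by reusing the $J_j$-argument from Step~1 of Proposition~\ref{prop:boundary-return-plateaux}.

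\textbf{Why the $J_j$-argument does not transfer.} That argument works only because the tail of $L$ is matched with $\sigma^k(\bb)\in\{\bb,\ba\}$. After the common prefix is deleted, a point of $S=\Omega(\ba,\bb)$ lying in $J_j$ lands in $(\ba,\bb]$ or in $[\ba,L)$, and the only point of $S$ there is $\sigma^k(\bb)$. For your $x=(\bb|_N)t$, your own choice puts $\sigma^N(\bb)$ strictly above $\bb$ (when $N$ is even, $t=\ba$) or strictly below $\ba$ (when $N$ is odd, $t=\bb$). Deleting the prefix therefore produces $(\ba,\sigma^N(\bb)]$ or $[\sigma^N(\bb),\bb)$. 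These intervals contain many points of $S$, so nothing forces $\sigma^j(\bb)\in\{\ba,\bb\}$.

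\textbf{A counterexample to the construction.} Take $\ba=\ell_0=1(12)^\infty$ and $\bb=221121112211(2111)^\infty$.
\begin{itemize}
\item The pair is weak-admissible. $\bb$ is not periodic, and no shift of $\bb$ equals $\ell_0$, so $k=+\infty$. In fact $\bb=\tau_{-2}(b+)$ for a non-precritical $b\in(1/2,1)$.
\item Let $\epsilon$ be any sequence with prefix $221122$. Then $\epsilon\prec\bb$, with first disagreement at position $6$.
\item Take $N=12$. It is even, $\sigma^{12}(\bb)=(2111)^\infty\succ\bb$, and $x=(\bb|_{12})\ell_0$ satisfies $\epsilon\prec x\prec\bb$.
\item However $\sigma^8(x)=2211\,\ell_0$ begins with $2$ and first differs from $\epsilon$ at position $5$, with a smaller symbol. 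So $\sigma^8(x)\in(\ba,\epsilon)$ and $x\notin\Omega(\ba,\epsilon)$, even though $\sigma^8(\bb)=2211(2111)^\infty\neq\bb$.
\item The odd choice $N=11$ with tail $\bb$ fails the same way at $j=8$.
\end{itemize}
A larger $N$ (e.g.\ $N=16$) happens to work in this example. Your proposal, though, gives no rule for choosing $N$ that controls every suffix $b_{j+1}\cdots b_N$ of $\bb|_N$ that is a prefix of $\bb$ (even $j$) or of $\ba$ (odd $j$). That control is the whole difficulty.

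\textbf{The tail selection is also misjustified.} $\sigma^N(\bb)\neq\ba,\bb$ does not give $\ba\prec\sigma^N(\bb)$ for even $N$. At $N=6$ in the example above, $\sigma^6(\bb)\prec\ba$ and $(\bb|_6)\ba\succ\bb$. What you need is $b_{N+1}=2$ for even $N$, or $b_{N+1}=1$ for odd $N$. Shifting $N$ by one does not guarantee this.

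\textbf{How the paper avoids this.} It does not build a witness; it argues by contradiction.
\begin{itemize}
\item If $\epsilon\prec\bb$ lay in $P(\bb)$, prefixing by $2$ shows $S\cap(\sigma(\bb),\sigma(\epsilon)]=\varnothing$.
\item Hence $\sigma(\bb)$ and $\nu=\min\bigl(S\cap[\sigma(\epsilon),(21)^\infty]\bigr)$ are adjacent points of $S$.
\item Since $\sigma(S_1)$ and $\sigma(S_2)$ are order intervals of $S$, deleting a common first symbol keeps adjacent $S$-points adjacent.
\item At the first differing symbol the pair must be $\{\max S_1,\min S_2\}=\{\ba,\bb\}$. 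This gives $\sigma^{r+1}(\bb)=\ba$ with $r+1$ odd, or $\sigma^{r+1}(\bb)=\bb$ with $r+1$ even, contradicting $k=+\infty$.
\end{itemize}
This adjacency-propagation idea is what your proposal lacks. In witness form, it says that $S$ accumulates at $\sigma(\bb)$ from above. Then $2\eta$, with $\eta\in S\cap(\sigma(\bb),\sigma(\epsilon)]$, is the witness in $[\epsilon,\bb)\cap\Omega(\ba,\epsilon)$ that you were trying to construct.
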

\begin{proof}
If $k<+\infty$, Proposition~\ref{prop:boundary-return-plateaux}
gives $P(\bb)=(L,\bb]$, so the plateau is not a singleton. It remains
to consider $k=+\infty$. Put $S=\Omega(\ba,\bb)$ and suppose that
$\epsilon\in P(\bb)$ and $\epsilon\neq\bb$.
Since
$\bb\in S$, every larger right boundary excludes $\bb$; hence the only possibility is
$\epsilon\prec\bb$ and $\sigma(\bb)\prec\sigma(\epsilon)$.
Moreover,
\[
S\cap(\sigma(\bb),\sigma(\epsilon)]=\varnothing,
\]
Indeed, if $\eta\in S\cap(\sigma(\bb),\sigma(\epsilon)]$, then
prefixing by $2$ reverses the alternating order and gives
\[
\epsilon\preceq 2\eta\prec\bb.
\]
Since $\ba\prec\epsilon\prec\bb$, this implies $2\eta\notin S$.
On the other hand, $S\subseteq\Omega(\ba,\epsilon)$ by monotonicity,
$2\eta\succeq\epsilon$ avoids the hole at time zero, and for every
$j\ge1$,
\[
\sigma^j(2\eta)=\sigma^{j-1}(\eta)\in S
\subseteq\Omega(\ba,\epsilon).
\]
Thus $2\eta\in\Omega(\ba,\epsilon)\setminus S$, contradicting
$\Omega(\ba,\epsilon)=S$.
The preceding exclusion shows that $\sigma(\epsilon)\notin S$.
The sequence $(21)^\infty$ is the maximal sequence in the alternating
order; its shifts are $(21)^\infty$ and $(12)^\infty$, so it avoids
$(\ba,\bb)$ and belongs to $S$. Hence
$(21)^\infty\succ\sigma(\epsilon)$, and compactness of $S$ gives a
least element
\[
\nu=\min\bigl(S\cap[\sigma(\epsilon),(21)^\infty]\bigr).
\]
Put $x_0=\sigma(\bb)$ and $y_0=\nu$. Then $x_0,y_0\in S$ and no
point of $S$ lies strictly between them. Write
$S_i=\{\omega\in S:\omega_1=i\}$. Weak-admissibility gives
$\max S_1=\ba$, $\min S_2=\bb$, and
\[
\sigma(S_1)=\{\eta\in S:\eta\succeq\sigma(\ba)\},\qquad
\sigma(S_2)=\{\eta\in S:\eta\preceq\sigma(\bb)\}.
\]
Thus each $\sigma(S_i)$ is an order interval in $S$. For $j\ge0$, write
$x_j=\sigma^j(x_0)$ and $y_j=\sigma^j(y_0)$. As long as
$x_j$ and $y_j$ have the same first symbol, deleting this common
symbol leaves two $S$-points with no $S$-point between them. Let $r$
be the maximal common-prefix length of $x_0$ and $y_0$, and put
$x_r=\sigma^r(x_0)$ and $y_r=\sigma^r(y_0)$. If $r$ is even, then
$x_r\prec y_r$, so the extremal property of $\ba$ and $\bb$ gives
$x_r=\ba$ and $y_r=\bb$. If $r$ is odd, then $x_r\succ y_r$, so
$x_r=\bb$ and $y_r=\ba$. Since
$x_r=\sigma^{r+1}(\bb)$, we obtain
\[
\sigma^{r+1}(\bb)=
\begin{cases}
\ba,&r\text{ even},\\
\bb,&r\text{ odd}.
\end{cases}
\]
This contradicts $k=+\infty$. Hence
$P(\bb)=\{\bb\}$.
\end{proof}

\begin{example}\label{ex:singleton-survivor-plateau}
The pair $\ba=(112)^\infty$, $\bb=22\,1^\infty$ is weak-admissible.
The right boundary is eventually periodic but not periodic, and none
of its odd shifts $2\,1^\infty$ and $1^\infty$ equals $\ba$.
Thus $n=m=k=+\infty$, and Proposition~\ref{prop:singleton-plateau} gives
$P(\bb)=\{22\,1^\infty\}$. An eventually periodic right boundary
can therefore have a singleton plateau.
\end{example}

\vspace{1em}
\noindent\textbf{Proof of Theorem~\ref{plateauxin}.}
\par
Monotonicity makes $P(\bb)$ order-convex: for
$\epsilon_1\preceq\epsilon\preceq\epsilon_2$ with
$\epsilon_1,\epsilon_2\in P(\bb)$,
\[
\Omega(\ba,\epsilon_2)\subseteq\Omega(\ba,\epsilon)
\subseteq\Omega(\ba,\epsilon_1)=\Omega(\ba,\bb).
\]
The outer spaces agree, and maximality follows from the definition of
$P(\bb)$. The finite return times $n,m$ have opposite parity, so exactly one
of $n<m$, $m<n$, or $n=m=+\infty$ holds. The first two cases give
$P(\bb)=(L,\bb]$ and $L\in\Omega(\ba,L)$ by
Proposition~\ref{prop:boundary-return-plateaux}; the last gives
$P(\bb)=\{\bb\}$ by Proposition~\ref{prop:singleton-plateau}.
\hfill$\square$

\subsection{Proof of Theorem \ref{theorem2}}
We now prove the devil's staircase property of the Hausdorff dimension
function.  For fixed $\ba$, we let $b$ vary in $[1/2,1)$
and use the boundary  sequences  defined below. All symbolic
parameter neighbourhoods are relative to this set of boundary  sequences.
We write $E(\ba)$ for the symbolic bifurcation set and $E(a)$
for its geometric counterpart.

\begin{definition}\label{deea}
Let \(1/2\leq b<1\), and write
$
\bb=\tau_{-2}(b+).
$
The {\bf symbolic bifurcation set}
\(E(\ba)\) consists of all such sequences \(\bb\) for which every cylinder
neighbourhood of \(\bb\) contains the boundary sequence \(\bb'\) of some
\(b'\in[1/2,1)\) satisfying
$
\Omega(\ba,\bb')\ne\Omega(\ba,\bb).
$
Equivalently,
\[
\begin{aligned}
E(\ba)=\{\,\bb:\ &\forall n\ge1,\ \exists\,\bb'\ne\bb
\text{ with }\bb'|_n=\bb|_n\\
&\text{and }\Omega(\ba,\bb')\ne\Omega(\ba,\bb)\,\},
\end{aligned}
\]
where both \(\bb\) and \(\bb'\) range over the boundary sequences just defined.
Under the negative-base $(-2)$ expansion, the corresponding geometric
parameter set is
\[
\begin{aligned}
E(a)=\{\,b\in[1/2,1):\ &\forall\delta>0,\ \exists\,b'\in[1/2,1)\\
&\text{with }0<|b-b'|<\delta
\text{ and }S_{-2}(a,b')\ne S_{-2}(a,b)\,\}.
\end{aligned}
\]
\end{definition}

\begin{proposition}\label{prprp4.2}
For fixed $\ba$ and \(1/2\leq b<1\), let
\(\bb=\tau_{-2}(b+)\). Then
\[
\bb\in E(\ba)\quad\Longleftrightarrow\quad
\bb\in\Omega(\ba,\bb).
\]
\end{proposition}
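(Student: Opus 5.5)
The plan is to prove the two implications separately. Both directions rest on one elementary fact about the alternating order: if two sequences share a prefix of length $N$, then any strict comparison of either one with a third sequence that is decided within the first $N$ symbols is the same for both. Throughout, $\bb=\tau_{-2}(b+)$ and the competing boundary sequences are $\bb'=\tau_{-2}(b'+)$ with $b'\in[1/2,1)$.

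\emph{($\Leftarrow$)} Suppose $\bb\in\Omega(\ba,\bb)$. Since $b<1$, there are points $b'\in(b,1)$ arbitrarily close to $b$.
\begin{enumerate}
\item[(a)] One-sided itineraries are right-continuous: $\tau_{-2}(b'+)\to\tau_{-2}(b+)$ as $b'\downarrow b$. Hence for every $n$ we can choose $b'>b$ with $\bb'|_n=\bb|_n$.
\item[(b)] We have $\bb'\ne\bb$. Monotonicity of $\pi_{-2}$ (Lemma~\ref{lem:address-injectivity}) together with $b'>b$ gives $\bb'\succ\bb$.
\item[(c)] Consequently $\bb\in(\ba,\bb')$, because $\ba$ begins with $1$ and $\bb$ begins with $2$. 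So $\bb\notin\Omega(\ba,\bb')$, while $\bb\in\Omega(\ba,\bb)$ by hypothesis.
\end{enumerate}
Thus $\Omega(\ba,\bb')\ne\Omega(\ba,\bb)$ for $\bb'$ in every cylinder neighbourhood of $\bb$, that is, $\bb\in E(\ba)$. This is the same mechanism as the first line of the proof of Proposition~\ref{prop:boundary-return-plateaux}.

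\emph{($\Rightarrow$)} I would prove the contrapositive: if $\bb\notin\Omega(\ba,\bb)$, then $\Omega(\ba,\cdot)$ is constant on a cylinder neighbourhood of $\bb$.
\begin{enumerate}
\item[(a)] By hypothesis there is $j\ge1$ with $\sigma^j(\bb)\in(\ba,\bb)$; the case $j=0$ is impossible since the interval is open.
\item[(b)] Let $p_1$ be the first index where $\sigma^j(\bb)$ differs from $\ba$, and $p_2$ the first index where it differs from $\bb$. Put $N:=j+\max\{p_1,p_2\}$.
\item[(c)] Take any boundary sequence $\bb'$ with $\bb'|_N=\bb|_N$, and assume without loss of generality $\bb'\prec\bb$; the reverse case is symmetric. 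By monotonicity, $\Omega(\ba,\bb)\subseteq\Omega(\ba,\bb')$.
\item[(d)] Suppose $\omega\in\Omega(\ba,\bb')\setminus\Omega(\ba,\bb)$. Some shift $u=\sigma^r(\omega)$ then lies in $[\bb',\bb)$. Since $\bb'$ and $\bb$ share the prefix of length $N$, so does $u$.
\item[(e)] Hence $\sigma^j(u)$ agrees with $\sigma^j(\bb)$ on the first $\max\{p_1,p_2\}$ symbols. Its comparisons with $\ba$ and with $\bb'$ are therefore decided exactly as those of $\sigma^j(\bb)$ with $\ba$ and $\bb$. Here we use that $\bb'$ agrees with $\bb$ up to position $p_2\le N$, so $\sigma^j(\bb)$ and $\bb'$ still first differ at position $p_2$.
\item[(f)] This gives $\sigma^{r+j}(\omega)=\sigma^j(u)\in(\ba,\bb')$, contradicting $\omega\in\Omega(\ba,\bb')$.
\end{enumerate}
Hence $\Omega(\ba,\bb')=\Omega(\ba,\bb)$ on the whole $N$-cylinder, and $\bb\notin E(\ba)$.

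\emph{Main obstacle.} The delicate step is the uniform prefix-length bookkeeping in (e). The window must be long enough that both strict inequalities defining $\sigma^j(\bb)\in(\ba,\bb)$ are decided within it, and it must also cover the comparison with the moving endpoint $\bb'$, not only with $\bb$. The choice $N=j+\max\{p_1,p_2\}$ is meant to handle exactly this. I would also check the extreme case $b=1/2$, where $\bb=r_0$ and only $b'\ge 1/2$ is admissible. This is harmless, because the neighbourhoods used in ($\Leftarrow$) come from $b'\downarrow b$, and the argument in ($\Rightarrow$) applies to whichever admissible $\bb'$ lie in the cylinder.
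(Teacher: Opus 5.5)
Your proof is correct and follows essentially the same route as the paper. For $(\Leftarrow)$, right-continuity and monotonicity of $b\mapsto\tau_{-2}(b+)$ put $\bb$ inside the enlarged hole $(\ba,\bb')$. For the contrapositive, on the cylinder of length $N=j+\max\{p_1,p_2\}$ the witness $\ba\prec\sigma^j(\cdot)\prec\cdot$ holds uniformly, so any extra survivor's shift $u$ in the half-open gap is pushed into the smaller hole after $j$ steps; the paper writes this with $\mathbf{c}=\min\{\bb,\epsilon\}$, $\mathbf{d}=\max\{\bb,\epsilon\}$ in place of your WLOG, which is only a cosmetic difference.
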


\begin{proof}
We first prove the left inclusion by contraposition.
Suppose that $\bb\notin\Omega(\ba,\bb)$. Choose $n\geq1$ with
$\ba\prec\sigma^n(\bb)\prec\bb$.   Choose $M\geq1$ such that both
comparisons are determined by the first $M$ symbols, and take
$N\geq n+M$. Let
\[
U=\{\xi\in2\{1,2\}^{\mathbb N}:\xi|_N=\bb|_N\}
\]
be the corresponding order-convex cylinder. For every $\xi,\eta\in U$,
we have
\[
\sigma^n(\xi)|_M=\sigma^n(\bb)|_M,
\qquad\eta|_M=\bb|_M.
\]
Consequently, the same strict comparisons hold uniformly:
\[
\ba\prec\sigma^n(\xi)\prec\eta
\qquad(\xi,\eta\in U).
\]
Fix any boundary sequence $\epsilon\in U$ corresponding to a right
boundary in $[1/2,1)$,
and put
$\mathbf{c}=\min\{\bb,\epsilon\}$ and
$\mathbf{d}=\max\{\bb,\epsilon\}$, with respect to the alternating order.
Monotonicity gives
$\Omega(\ba,\mathbf{d})\subseteq\Omega(\ba,\mathbf{c})$. If the inclusion
were strict, choose
$
\omega\in\Omega(\ba,\mathbf{c})\setminus\Omega(\ba,\mathbf{d}).
$
There is then some $k\geq0$ such that
$\sigma^k(\omega)\in(\ba,\mathbf{d})$. Since $\omega$ survives for
$(\ba,\mathbf{c})$, this shift cannot lie in $(\ba,\mathbf{c})$. Hence
\[
u:=\sigma^k(\omega)\in[\mathbf{c},\mathbf{d})\subseteq U,
\]
where the last inclusion follows from order-convexity and
$\mathbf{c},\mathbf{d}\in U$. Applying the uniform comparison to
$\xi=u$ and $\eta=\mathbf{c}$ gives
$
\sigma^{k+n}(\omega)=\sigma^n(u)\in(\ba,\mathbf{c}),
$
contradicting $\omega\in\Omega(\ba,\mathbf{c})$. Thus the survivor
space is constant on all upper-sequence parameters in $U$, so
$\bb\notin E(\ba)$. This proves the left inclusion.

\par  For the right inclusion, suppose that
$\bb=\tau_{-2}(b+)\in\Omega(\ba,\bb)$, where $1/2\le b<1$.
We use the standard monotonicity and right-continuity of the upper-sequence
map $b\mapsto\tau_{-2}(b+)$ on $[1/2,1)$, with the right-completed
convention at $c$: it is strictly increasing in the alternating order
and right-continuous in the finite-prefix topology.
For completeness, for each $N$, the first $N$ symbols are constant
on each component of the complement of the finite set
\[
\bigcup_{j=0}^{N-1}T_{-2}^{-j}(1/2).
\]
At these points the upper sequence selects the prefix from the right;
in particular, $\tau_{-2}(c+)=r_0$.
Thus, for every
$N\ge1$, we can choose
$b'\in(b,1)$ sufficiently close to $b$ (also when $b=c$) so that
$\epsilon:=\tau_{-2}(b'+)$ satisfies
$\epsilon|_N=\bb|_N$ and $\epsilon\succ\bb$.
Then $\bb\in(\ba,\epsilon)$, so
$\bb\notin\Omega(\ba,\epsilon)$ although $\bb\in\Omega(\ba,\bb)$.
Every cylinder neighbourhood therefore contains a parameter with a
different survivor space. Hence $\bb\in E(\ba)$, proving the reverse inclusion.
\end{proof}

The next example distinguishes survivor and entropy plateaux.

\begin{example}\label{newplatex}
Let $\ba=(112)^\infty$ and $\bb=(221)^\infty$, as in
Example~\ref{ex:two-boundary-return-plateaux}\textup{(\ref{it:boundary-return-even-example})}, where the exact survivor plateau $P(\bb)$ was established. We now
exhibit an entropy plateau that strictly contains $P(\bb)$.
First we verify the positive-entropy hypothesis of Lemma~\ref{entropy}.
Let $X$ consist of all infinite concatenations of the blocks $11$ and
$22$, together with their shifts. This is a compact subshift contained
in $\Omega(\ba,\bb)$: a sequence beginning with $1$ starts with $12$,
$111$, or $1122$, and one beginning with $2$ starts with $21$, $222$,
or $2211$; these prefixes give the required alternating inequalities.
There are at least $2^n$ distinct words of length $2n$ in $X$, so
\[
h_{\mathrm{top}}(\Omega(\ba,\bb))
\geq h_{\mathrm{top}}(X)\geq\frac{\log2}{2}>0.
\]
The kneading determinant is
\[
K(z)=\sum_{n=1}^\infty (b_n-a_n)z^{n-1}
     =\frac{1+z-z^2}{1-z^3}.
\]
Its largest negative root is $z_0=(1-\sqrt5)/2$, so Lemma~\ref{entropy} gives
\[
h_{\mathrm{top}}(\Omega(\ba,\bb))
=\log(-1/z_0)
=\log\frac{1+\sqrt5}{2}.
\]
To extend the entropy plateau beyond $P(\bb)$, define
\[
\bb_h^-:=221(221112)^\infty,
\qquad
\bb_h^+:=(221112)^\infty.
\]
A direct check  shows that both pairs
$(\ba,\bb_h^-)$ and $(\ba,\bb_h^+)$ are weak-admissible.
Moreover, a sequence of $X$ beginning with $2$ starts with $21$, $222$,
$221111$, or $22112$, each of which places it above $\bb_h^+$.
The left-boundary comparison is unchanged, and hence
$X\subseteq\Omega(\ba,\bb_h^+)\subseteq\Omega(\ba,\bb_h^-)$.
Both endpoint spaces therefore have positive entropy, and
Lemma~\ref{entropy} applies at both endpoints. Their determinants are
\[
K_{\bb_h^+}(z)=\frac{1+z-z^2}{1-z^6},\qquad
K_{\bb_h^-}(z)=\frac{(1+z-z^2)(1+z^3-z^6)}{1-z^6}.
\]
Both determinants have the same largest negative root $z_0$: the
denominator is nonzero on $(-1,0)$, and the additional factor
$1+z^3-z^6$ is positive on $[z_0,0]$. Thus both endpoint systems
have entropy $\log((1+\sqrt5)/2)$. Monotonicity then gives this same entropy throughout
$[\bb_h^-,\bb_h^+]$, an entropy plateau containing $\bb$.
Moreover, the ordering
\[
\bb_h^-\prec221221(112)^\infty
\prec\bb\prec221(112)^\infty\prec\bb_h^+
\]
and Example~\ref{ex:two-boundary-return-plateaux}\textup{(\ref{it:boundary-return-even-example})}
give $P(\bb)\subsetneq[\bb_h^-,\bb_h^+]$. For example,
$\bb\prec\epsilon\prec221(112)^\infty$ excludes $\bb$ from the survivor
space while preserving entropy. No maximality of the entropy plateau
is asserted.
\end{example}

By Example~\ref{newplatex}, the topological entropy may remain unchanged even
when the survivor set varies.  We therefore introduce the following {\bf
bifurcation set for topological entropy} $B(\ba)$:
\[
\begin{aligned}
B(\ba):=\{\,\bb:\ &\forall n\ge1,\ \exists\,\bb'\ne\bb
\text{ with }\bb'|_n=\bb|_n\\
&\text{and }h_{\mathrm{top}}(\Omega(\ba,\bb'))
\ne h_{\mathrm{top}}(\Omega(\ba,\bb))\,\}.
\end{aligned}
\]
Here \(\bb,\bb'\) range over the boundary sequences of parameters in
\([1/2,1)\), as in Definition~\ref{deea}.
Clearly $B(\ba)\subseteq E(\ba)$. The next proposition supplies the
null-set property needed for the dimension staircase.

\begin{proposition}\label{prprpea}
Let $a$ be fixed. The bifurcation set $E(a)$ is a Lebesgue null set.
\end{proposition}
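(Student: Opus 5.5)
By Proposition~\ref{prprp4.2}, a parameter $b$ whose upper sequence $\bb=\tau_{-2}(b+)$ lies in $E(\ba)$ satisfies $\bb\in\Omega(\ba,\bb)$. We first reduce the geometric set to this symbolic one. Suppose $b\notin E(\ba)$ symbolically. Then the survivor space is constant on a cylinder around $\bb$. By right-continuity and monotonicity of $b\mapsto\tau_{-2}(b+)$, this cylinder contains all upper sequences of a one-sided neighbourhood of $b$, and in fact a two-sided neighbourhood when $b\notin C_{-2}$. The set $C_{-2}$ is countable, so up to a countable set $E(a)$ is contained in
\[
F:=\{b\in[1/2,1):\ \bb\in\Omega(\ba,\bb)\}.
\]
Equality of symbolic spaces gives equality of survivor sets away from countably many precritical points. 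We will also treat $b$ with a genuine jump of survivor sets at precritical points as part of this countable exception. It therefore suffices to show that $F$ has Lebesgue measure zero.

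For $b\in F$, every shift satisfies $\sigma^n(\bb)\notin(\ba,\bb)$. Applying this at shifts beginning with $2$ gives $\sigma^n(\bb)\succeq\bb$; at shifts beginning with $1$ the constraint is relative to $\ba$. In particular, $\pi_{-2}(\sigma^n\bb)=T_{-2}^n(b)$ never enters the open interval $(a,b)$. We fix $\delta>0$ and restrict to $b\geq 1/2+\delta$. On this range every $b\in F$ has an orbit under $T_{-2}$ that avoids the fixed hole $(a,1/2+\delta)$. Indeed, since $b\geq 1/2+\delta$, we have $(a,1/2+\delta)\subseteq(a,b)$, using $a\leq1/2$ with $a<1/2+\delta$.

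Hence
\[
F\cap[1/2+\delta,1)\subseteq S_{-2}(a,1/2+\delta).
\]
This is the survivor set of the expanding map $T_{-2}$ with a nonempty open hole. For a nonempty open hole $H$, Lebesgue measure is ergodic for $T_{-2}$, so almost every orbit enters $H$, and the survivor set is Lebesgue-null. Taking $\delta=1/k$ and a countable union over $k$ shows that $F\setminus\{1/2\}$ is null, and therefore $F$ is null.

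The main obstacle is the reduction step. We must justify carefully that a symbolic non-bifurcation point gives a geometric non-bifurcation point, in particular that survivor sets (not just symbolic spaces) agree on a genuine open neighbourhood of $b$. The subtlety is at precritical $b$, where left-approaching parameters have upper sequences that are not close to $\bb$ in the cylinder topology. If this becomes delicate, the fallback is to prove directly that $E(a)\setminus C_{-2}\subseteq F$ and absorb $C_{-2}$ as a countable set. The measure estimate is routine once this is done; the only tool it needs is ergodicity of $T_{-2}$ with respect to Lebesgue measure, which follows because $T_{-2}$ is conjugate on the circle to $m_{-2}$.
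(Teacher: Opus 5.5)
\paragraph{Verdict.} Your measure estimate matches the paper's. For $b\ge 1/2+\delta$, a self-surviving $b$ lies in $S_{-2}(a,1/2+\delta)$; that set is Lebesgue-null by ergodicity; and a countable union over $\delta=1/k$ finishes the argument. The genuine gap is the reduction from the geometric set $E(a)$ to your set $F$.

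\paragraph{Where the reduction fails.} The step that fails is the sentence absorbing ``genuine jumps of survivor sets at precritical points'' into the countable exception $C_{-2}$. Whether $b\in E(a)$ is decided by the exact sets $S_{-2}(a,b')$ at parameters $b'$ arbitrarily close to $b$. Suppose symbolic equality $\Omega(\ba,\bb')=\Omega(\ba,\bb)$ only gave $S_{-2}(a,b')=S_{-2}(a,b)$ up to countably many precritical points. Then one such discrepancy at parameters $b'\to b$ would already put $b$ into $E(a)$. Precritical parameters are dense, so the set of $b$ affected this way is not controlled by the countability of $C_{-2}$; a priori it could be all of $[1/2,1)\setminus F$. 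What you actually need is exact equality of survivor sets on a two-sided neighbourhood of every $b\notin C_{-2}$ with $\bb\notin\Omega(\ba,\bb)$. Your ``fallback'' $E(a)\setminus C_{-2}\subseteq F$ is exactly this missing statement, not a way around it.

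\paragraph{How the paper avoids this.} The paper never passes through the symbolic side here. It uses the geometric self-survival characterization $E(a)=\{b:b\in S_{-2}(a,b)\}$ from the first part of Proposition~\ref{prop:edlm-null}:
\begin{itemize}
\item If the orbit of $b>1/2$ first enters $(a,b)$ at time $r$, pick $\max\{1/2,T_{-2}^r(b)\}<d<b$.
\item Compactness of the finite-time survivor sets for $J=(a,d)$ gives a neighbourhood $U$ of $b$ all of whose points fall into $J$ by time $r$.
\item For $t$ near $b$, the holes $(a,t)$ and $(a,b)$ both contain $J$ and differ only inside $U$, which no survivor of either hole visits.
\end{itemize}
Hence $S_{-2}(a,t)=S_{-2}(a,b)$ exactly, precritical points included.

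\paragraph{Repairing your route.} If you prefer to keep the symbolic route, you must prove the exact identity $S_{-2}(a,b')=\pi_{-2}(\Omega(\ba,\bb'))$ for all $b'$ near such a $b$. For $a<1/2<b'$ this holds: every precritical point escapes through $1/2$, and every sequence $w\ell_0$ or $wr_0$ is excluded from $\Omega(\ba,\bb')$. For $a=1/2$, a discrepancy can only come from orbits passing through $b'$ when $b'$ itself survives. That fails near a non-precritical, non-self-surviving $b$, because escape at a finite time is an open condition there.
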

\begin{proof}
The negative doubling map preserves Lebesgue measure and is ergodic.  For
each $N\geq1$ put
\[
H_N:=\left(a,\frac12+\frac{1}{2(N+1)}\right).
\]
This is a fixed hole of positive Lebesgue measure.  Ergodicity implies that
almost every orbit visits $H_N$ (in fact, infinitely often), and therefore
\[
\operatorname{Leb}\bigl(S_{-2}(H_N)\bigr)=0
\qquad\text{for every }N\geq1.
\]
The completed endpoint conventions affect only the countable precritical
set and hence do not change this measure statement.

The geometric self-survival characterization proved independently in
the first part of Proposition~\ref{prop:edlm-null}, applied to $T_{-2}$,
gives $b\in S_{-2}(a,b)$ for every $b\in E(a)$.
That characterization does not use the present null-set conclusion.
If $b>1/2$, choose $N$ so large that
$\frac12+\frac{1}{2(N+1)}\le b$. Then $H_N\subseteq(a,b)$ and
$b\in S_{-2}(H_N)$. Therefore
\[
E(a)\subseteq\{1/2\}\cup\bigcup_{N=1}^{\infty}S_{-2}(H_N).
\]
Each set on the right has Lebesgue measure zero. Countable
subadditivity therefore gives $\operatorname{Leb}(E(a))=0$.
\end{proof}

\vspace{1em}
{\bf \noindent Proof of Theorem \ref{theorem2}}
\par
Fix $0<a\leq1/2$ and let $b$ range over $[1/2,1)$, with all
parameter neighbourhoods understood relative to this interval.
By Proposition~\ref{prprpea}, the
survivor-set bifurcation set $E(a)$ is Lebesgue null. Let $B(a)$ denote
the corresponding geometric entropy-bifurcation set. Since
$B(a)\subseteq E(a)$, it is also Lebesgue null. By definition,
$\lambda(a,\cdot):b\mapsto h_{\mathrm{top}}(S_{-2}(a,b))$ is locally
constant outside $B(a)$. It is nonincreasing by inclusion of survivor
sets and continuous by Proposition~\ref{continuity}, applied to
$T_{-2}$. At $b=1/2$, this continuity is understood from the right.

The dimension formula and the endpoint discussion in
Section~\ref{subsecdimensiondef} apply to every nonempty hole in the
present range.
If \(a<1/2<b\), the survivor set avoids
the discontinuity; if \(a=1/2<b\), the left-completed restriction is
continuous on the survivor set; if \(a<1/2=b\), the same holds for the
right-completed restriction. Since \(T_{-2}\) has constant slope of
absolute value \(2\), the dimension is the survivor entropy divided by
\(\log2\) in all these cases.
If \(a=b=1/2\), the hole is empty and
\(S_{-2}(1/2,1/2)=[0,1]\), whose Hausdorff dimension is \(1\).
By Remark~\ref{re1}, the corresponding symbolic space is the full
shift, with entropy \(\log2\), so the same identity holds in this case.
Thus, for every \(b\in[1/2,1)\),
\[
\dim_{\mathrm H}S_{-2}(a,b)
 =\frac{h_{\mathrm{top}}(S_{-2}(a,b))}{\log 2}.
\]
In particular, entropy continuity at the left endpoint gives
\[
\lim_{b\downarrow1/2}\eta(a,b)=\eta(a,1/2),
\]
and this limit equals \(1\) when \(a=1/2\).
Consequently, the Hausdorff dimension function
$\eta(a,\cdot)$ is continuous and nonincreasing on \([1/2,1)\), and
locally constant outside the Lebesgue-null set \(B(a)\). It is therefore
a devil's staircase on \([1/2,1)\). $\hfill\square$

\section{\texorpdfstring{Applications in $f\in\mathrm{EDLM}$}{Applications in EDLM}}\label{secfinal}

The symbolic classification above gives a common subshift for computing
the survivor entropy of an expansive decreasing Lorenz map with a hole.
For holes strictly containing the critical point, the two interval
survivor systems have the same itinerary subshift. Nonempty
critical-endpoint holes may contribute countable differences of
itineraries, while the empty-hole case uses the full kneading space.
The resulting entropy correspondence does not by itself imply equality
of geometric survivor sets or Hausdorff dimensions; a dimension
comparison requires additional geometric regularity of the address maps.

Analogously to the full case for $T_{-2}$ described in
Remark~\ref{re1}, we have the following conclusion for a general EDLM.

\begin{remark}\label{re-edlm-empty-hole}
Let $f\in\mathrm{EDLM}$ have critical point $c$ and kneading
invariants $(k_-,k_+)$. In the degenerate
case $a=b=c$, the hole $H=(c,c)$ is empty. No interval is removed and
$
S_f(c,c)=[0,1].
$
By Theorem~\ref{thm2}, the corresponding symbolic survivor space is
$\Omega(f)=\Omega(k_-,k_+)$, with the usual one-sided sequences retained.
Thus no additional symbolic restriction is imposed, and
$h_{\mathrm{top}}(S_f(c,c))=h_{\mathrm{top}}(f)$.
\end{remark}

\begin{proposition}\label{proptwoholes}
Let \(f\in\mathrm{EDLM}\) have kneading invariants \((k_-,k_+)\), and let
\(H=(a,b)\) be a hole with \(0<a\le c\le b<1\). Put
\[
\ba=\tau_f(a-),\qquad \bb=\tau_f(b+).
\]
Using the interval address map \(\pi_{-2}\), define
\[
H_0=(p_0,q_0),\qquad
p_0=\pi_{-2}(k_-),\quad q_0=\pi_{-2}(k_+),
\]
and
\[
H_1=(p_1,q_1),\qquad
p_1=\pi_{-2}(\ba),\quad q_1=\pi_{-2}(\bb).
\]
Then \(H_0\subseteq H_1\) and \(0<p_1\le1/2\le q_1<1\).
In all cases,
\[
h_{\mathrm{top}}(S_f(a,b))
=h_{\mathrm{top}}(\Omega(\ba,\bb))
=h_{\mathrm{top}}(S_{-2}(p_1,q_1)).
\]
For \(a<b\), itinerary coding conjugates the completed systems on
\(S_f(a,b)\) and \(S_{-2}(p_1,q_1)\) to compact subshifts having at
most countable symmetric difference with \(\Omega(\ba,\bb)\). If \(a<c<b\), both
subshifts equal \(\Omega(\ba,\bb)\).
\end{proposition}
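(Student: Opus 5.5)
The plan is to work throughout in the common symbolic space $\{1,2\}^{\mathbb N}$ with the alternating order, using Theorem~\ref{thm2} for $f$ and the itinerary coding of $T_{-2}$.

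\emph{Inclusion and bounds.} Since $a\le c\le b$ and one-sided itineraries are monotone in the alternating order (density of $C(f)$), we have $\ba\preceq k_-$ and $k_+\preceq\bb$. The address map $\pi_{-2}$ is nondecreasing by the proof of Lemma~\ref{lem:address-injectivity}, so $p_1\le p_0$ and $q_0\le q_1$, which gives $H_0\subseteq H_1$. Lemma~\ref{lem:address-injectivity} applied to the decreasing-admissible pair $(k_-,k_+)$ (Theorem~\ref{thm1}) gives $0<p_0\le1/2\le q_0<1$. For the bounds on $p_1,q_1$: $\ba$ begins with $1$, so $p_1\le1/2$. Next, $p_1=0$ would force $\ba=(12)^\infty$, which would make $a$ a point with itinerary $(12)^\infty$, namely $0$, contradicting $a>0$. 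Symmetrically, $q_1<1$.

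\emph{Symbolic description of the survivors.} For $a<b$, take the completed survivor system as in the proof of Proposition~\ref{continuity}. Its itinerary map is continuous and, by density of $C(f)$, injective, so it conjugates the system to a compact subshift $X_H$. For points $x$ outside $C(f)$ and outside the countable set of preimages of $a,b$, strict monotonicity of $\tau_f$ gives $f^n(x)\in(a,b)$ iff $\sigma^n\tau_f(x)\in(\ba,\bb)$. Since every sequence of $\Omega(\ba,\bb)\subseteq\Omega(k_-,k_+)=\Omega(f)$ is realized (Theorem~\ref{thm2}), $X_H$ agrees with $\Omega(\ba,\bb)$ up to the one-sided itineraries of these countably many exceptional points.

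The same argument, with $T_{-2}$ in place of $f$, shows that the completed system on $S_{-2}(p_1,q_1)$ is conjugate to a subshift $Y_H$ with $Y_H\triangle\Omega(\ba,\bb)$ countable. This uses that $\pi_{-2}$ maps $\Omega(\ba,\bb)$ onto $S_{-2}(p_1,q_1)$ modulo countably many ambiguous addresses $w\ell_0,wr_0$, and that the hole comparison transfers by monotonicity, exactly as in the proof of Theorem~\ref{thm2}.

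When $a<c<b$, the points $c$ and its preimages lie in or enter the open hole, so no ambiguity survives in either system. Also $\ba,\bb$ are then not boundary sequences of precritical points visiting $c$ before leaving. Hence both subshifts equal $\Omega(\ba,\bb)$ exactly. The entropy equalities then follow from Lemma~\ref{lem:countable-diff-entropy}.

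\emph{Degenerate case and main obstacle.} For $a=b=c$, Remark~\ref{re-edlm-empty-hole} gives $h_{\mathrm{top}}(S_f(c,c))=h_{\mathrm{top}}(\Omega(k_-,k_+))$. Here $(p_1,q_1)=(p_0,q_0)$, and Theorem~\ref{thm:dlm-hole} together with the same countable-difference argument identifies this with the $T_{-2}$ survivor entropy (with the full shift when $p_0=q_0=1/2$).

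The main obstacle is making the exceptional set precise at critical-endpoint holes ($a=c$ or $b=c$). There the completed orbit uses $f(c-)$ or $f(c+)$ at each return to $c$, so its itinerary may differ from $w\,k_\pm$ and from $w\,\ell_0$ or $w\,r_0$ on the $T_{-2}$ side. I would handle this by showing that every discrepant sequence is of the form $w\,\xi$ with $\xi$ one of finitely many completed critical itineraries. Such sequences are countable, which is all Lemma~\ref{lem:countable-diff-entropy} needs.
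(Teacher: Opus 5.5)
Your proposal is correct and follows essentially the same route as the paper: monotonicity of one-sided itineraries and of $\pi_{-2}$ for $H_0\subseteq H_1$ and the bounds on $p_1,q_1$; conjugacy of the completed survivor systems to compact subshifts via expansivity; a countable exceptional set of critical and precritical itineraries, which, as you rightly note, also contains completed itineraries that need not equal $w\,k_\pm$; and finally Lemma~\ref{lem:countable-diff-entropy}. The paper makes explicit two details you leave implicit. First, $p_1<q_1$ whenever $a<b$, since otherwise $\ba=\ell_0$ and $\bb=r_0$, which forces $a=b=c$; so the $T_{-2}$ hole is nonempty. Second, for $a<c<b$ the sequences $k_\pm,\ell_0,r_0$ lie strictly inside $(\ba,\bb)$, which is precisely why no prefixed sequence $w k_\pm$, $w\ell_0$, $w r_0$ lies in $\Omega(\ba,\bb)$ and both subshifts are exact; your appeal to Theorem~\ref{thm:dlm-hole} in the degenerate case is unnecessary, since the countable-difference argument alone suffices there.
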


\begin{proof}
Put \(\ell_0=1(12)^\infty\) and \(r_0=2(21)^\infty\), the two
critical itineraries of \(T_{-2}\). Strict monotonicity of the
one-sided itineraries at distinct points of an EDLM gives
{
\[
\ba\preceq k_-\preceq\ell_0\prec r_0\preceq k_+\preceq\bb.
\]}
{
The first inequality is an equality exactly when \(a=c\), and the last
one is an equality exactly when \(b=c\); these equalities refer to
the one-sided boundary sequences.}
The address map is nondecreasing in the alternating order, so
\(H_0\subseteq H_1\). Moreover,
{
\((12)^\infty\prec\ba\) and \(\bb\prec(21)^\infty\), since
\(0<a\) and \(b<1\). Since the only sequences of \(1/2\) are
\(\ell_0,r_0\), and the extremal sequences represent \(0,1\), this gives
\(0<p_1\le1/2\le q_1<1\). An equality at \(1/2\) can occur only
at the corresponding critical endpoint.}
In particular, \(H_0\cup H_1=H_1\); the two-hole notation records
the kneading restriction and the prescribed-hole restriction separately.

Suppose first that \(a<b\). Then \(p_1<q_1\): otherwise both
addresses would be \(1/2\), forcing \(\ba=\ell_0\), \(\bb=r_0\),
and hence \(a=b=c\) by the preceding inequalities.
When \(a=c<b\), the boundary sequence is \(\ba=\tau_f(c-)=k_-\),
and the completed dynamics uses \(f(c-)\) at each visit to \(c\).
When \(a<c=b\), it uses \(f(c+)\), while
\(\bb=\tau_f(c+)=k_+\). These one-sided boundary sequences need not
be the actual completed itineraries of \(c\).
For each of the two nonempty holes, the completed map is continuous
on the complement of the hole, since any retained critical endpoint
is approached only from its selected side. The finite-time survivor
sets are therefore compact by induction, and so is their intersection.
Label the retained left component by \(1\) and the right component
by \(2\), including the critical endpoint in its retained component.
The resulting itinerary map is continuous on the survivor set and
commutes with the dynamics. It is injective by topological expansivity:
two distinct points with the same itinerary would leave the interval
between them free of every first preimage of the critical point.
Thus each itinerary map is a conjugacy onto its image, a compact
subshift. Denote these images by \(X_f\) and \(X_{-2}\), respectively.

Put \(\Omega=\Omega(\ba,\bb)\). The kneading bounds imply
\(\Omega\subseteq\Omega(k_-,k_+)=\Omega(f)\), by
Theorem~\ref{thm2}. For points whose orbits avoid \(c\), order
preservation of itineraries makes avoidance of \((a,b)\) equivalent
to avoidance of \((\ba,\bb)\) by every shift. Hence \(X_f\) and
\(\Omega\) can differ only at itineraries of critical or precritical
points. There are countably many such points; each has one completed
itinerary and at most two one-sided itineraries.
The same argument applies to \(T_{-2}\), using \(\pi_{-2}\): away
from the preimages of \(1/2\), the address is unique, intertwines the
dynamics, and identifies the two hole-avoidance conditions. Consequently,
\[
X_f\triangle\Omega\quad\text{and}\quad
X_{-2}\triangle\Omega
\quad\text{are at most countable}.
\]
All three spaces are compact subshifts, so
Lemma~\ref{lem:countable-diff-entropy} gives equality of their entropies.
The itinerary conjugacies give the asserted equality of survivor
entropies.

If \(a<c<b\), the critical point and all its preimages escape in
each interval system. Moreover, \(k_-,k_+,\ell_0,r_0\) all lie
strictly inside \((\ba,\bb)\), so their prefixed one-sided
itineraries are also excluded from \(\Omega\).
There are therefore no exceptional itineraries, and
\(X_f=X_{-2}=\Omega\). This proves the common conjugacy assertion
for holes strictly containing the critical point.

Finally, let \(a=b=c\). Then \((\ba,\bb)=(k_-,k_+)\) and
\(H_1=H_0\), which may be empty. The empty-hole convention of
Remark~\ref{re-edlm-empty-hole} gives
\(\Omega(f)=\Omega(k_-,k_+)=\Omega(\ba,\bb)\). By that remark and the symbolic
entropy formula in Section~\ref{sese2},
\[
h_{\mathrm{top}}(S_f(c,c))
=h_{\mathrm{top}}(f)
=h_{\mathrm{top}}(\Omega(k_-,k_+)).
\]
If \(H_0\) is nonempty, its completed negative-doubling survivor
system has a compact itinerary subshift. The preceding address
argument shows that this subshift differs from \(\Omega(k_-,k_+)\)
by at most countably many critical or precritical itineraries, so
Lemma~\ref{lem:countable-diff-entropy} gives the required entropy
equality. If \(H_0\) is empty, Remark~\ref{re1} gives the full shift
and entropy \(\log2\) on both sides.
This establishes the symbolic and entropy assertions in the empty-hole
case without identifying the interval topologically with its kneading
space.
\end{proof}

\begin{proof}[Proof of Corollary~\ref{coco2}]
{
For the forward assertion, the hypotheses already give
\(0<a\le c\le b<1\). When \(a=c\) or \(b=c\), the corresponding
one-sided completion is used, and when \(a=b=c\) the empty-hole
convention of Remark~\ref{re-edlm-empty-hole} applies.}
Proposition~\ref{proptwoholes} associates this hole with
\(\Omega(\ba,\bb)\) and gives equality of their entropies, including
the critical-endpoint and empty-hole cases.
Weak admissibility and \(\sigma(\ba)\prec\sigma(\bb)\) then allow
Lemma~\ref{lem:dlm-realization} to realize this space as \(\Omega(g)\)
for a map \(g\in\mathrm{DLM}\).

For the converse, either \(0<a<1/2<b<1\), or
\(a=b=1/2\) and
\((k_-,k_+)=(1(12)^\infty,2(21)^\infty)\).
The geometric assertion of Theorem~\ref{thm:dlm-hole} therefore gives
\[
\tau_{-2}(a-)=k_-,\qquad \tau_{-2}(b+)=k_+,
\]
and
\[
\Omega\bigl(\tau_{-2}(a-),\tau_{-2}(b+)\bigr)
=\Omega(k_-,k_+)=\Omega(g).
\]
Taking $f=T_{-2}\in\mathrm{EDLM}$ proves the required representation;
in the degenerate case the hole $(1/2,1/2)$ is empty.
\end{proof}

The entropy correspondence does not imply equality of geometric survivor
sets or Hausdorff dimensions: these depend on the address maps.
A dimension comparison therefore requires additional geometric regularity,
such as suitable distortion estimates.

For boundary sequences arising from a hole of an EDLM, the following
remark records how the adjustment procedure leads to a DLM realization.

\begin{remark}
Let $f\in\mathrm{EDLM}$ have a hole $H=(a,b)$ with
{$0<a\le c\le b<1$}, and write
$
\ba=\tau_f(a-),$ $\bb=\tau_f(b+).
$
Suppose the adjustment procedure of Lemma~\ref{lelele11} and
Lemma~\ref{newlemma1} produces a weak-admissible pair
$(\widetilde{\ba},\widetilde{\bb})$. Assume in addition that
$
\sigma(\widetilde{\ba})\prec\sigma(\widetilde{\bb}),
$
 then
Lemma~\ref{lem:dlm-realization} gives a map $g\in\mathrm{DLM}$ with
\[
\Omega(g)=\Omega(\widetilde{\ba},\widetilde{\bb}).
\]
By the adjustment lemmas, $\Omega(g)$ and $\Omega(\ba,\bb)$ differ by
at most a countable set. Thus
Proposition~\ref{proptwoholes} and Lemma~\ref{lem:countable-diff-entropy}
give $h_{\mathrm{top}}(g)=h_{\mathrm{top}}(S_f(a,b))$, including the
critical-endpoint and empty-hole cases.
If no adjustment is needed, the realization is exact:
$\Omega(g)=\Omega(\ba,\bb)$.
\end{remark}

The parameter correspondence can be written explicitly. For fixed
\(f\in\mathrm{EDLM}\) and a fixed left endpoint
{\(0<a\le c\)}, define
\[
\Phi_f:{[c,1)}\longrightarrow{[1/2,1)},\qquad
\Phi_f(x):=\pi_{-2}\bigl(\tau_f(x+)\bigr),
\]
{with \(\tau_f(c+)=k_+\) at the left endpoint \(x=c\),}
and set
\[
m:=\pi_{-2}\bigl(\tau_f(a-)\bigr).
\]
Proposition~\ref{proptwoholes} gives
{\(0<m\le1/2\le\Phi_f(b)<1\)}. Its entropy statement gives, throughout
this range,
\begin{equation}\label{eq:edlm-entropy-correspondence}
h_{\mathrm{top}}\bigl(S_f(a,b)\bigr)
 =
h_{\mathrm{top}}\bigl(S_{-2}(m,\Phi_f(b))\bigr).
\end{equation}
The map $\Phi_f$ need not preserve null sets under pullback. Transferring
the parameter-measure conclusion therefore requires condition~(P) below.

\begin{proposition}\label{prop:edlm-null}
Fix \(f\in\mathrm{EDLM}\) and {\(0<a\le c\)}. Let \(E_f(a)\)
denote the set of {\(b\in[c,1)\) at which the map
\(t\mapsto S_f(a,t)\) is not locally constant relative to \([c,1)\)}.
Define \(E_{-2}(m)\) analogously as the bifurcation set of
\(T_{-2}\) {for right endpoints in \([1/2,1)\)}, with fixed
left endpoint $m:=\pi_{-2}\bigl(\tau_f(a-)\bigr)$. Then
\[
\begin{aligned}
E_f(a)&=\{b\in[c,1):b\in S_f(a,b)\}\\
&\subseteq C(f)\cup\Phi_f^{-1}\bigl(E_{-2}(m)\bigr).
\end{aligned}
\]
Here \(C(f)\) is the countable set consisting of the critical point
and its preimages, as in Definition~\ref{defofdecreasing}.
In particular, if the pullback condition
\begin{equation*}
\operatorname{Leb}\!\left(
\Phi_f^{-1}\bigl(E_{-2}(m)\bigr)\right)=0
\tag{P}\label{eq:pullback-null}
\end{equation*}
holds, then \(\operatorname{Leb}(E_f(a))=0\).
\end{proposition}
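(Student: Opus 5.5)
Equality first, then inclusion, then measure.

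\emph{Equality $E_f(a)=\{b\in[c,1):b\in S_f(a,b)\}$.} I will use the monotonicity $S_f(a,t')\subseteq S_f(a,t)$ for $t\le t'$.

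Suppose $b\in S_f(a,b)$. For every $b'>b$ the point $b$ lies in $(a,b')$, so $b\notin S_f(a,b')$. Hence every right neighbourhood of $b$ contains a parameter with a different survivor set, and $b\in E_f(a)$. At $b=c$ this uses the right completion, which is exactly why $c$ is retained.

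Conversely, suppose $b\notin S_f(a,b)$, with $b>c$. Some iterate satisfies $f^n(b)\in(a,b)$, and I choose $n$ minimal. Since $f^n(b)\ne c$, avoiding precritical obstructions, continuity of $f^n$ near $b$ gives a $\delta>0$ with $f^n(t)\in(a,b-\delta)$ for $|t-b|<\delta$. Fix $b_1<b_2$ in this window. For $x\in S_f(a,b_1)\setminus S_f(a,b_2)$, some orbit point $u=f^k(x)$ lies in $[b_1,b_2)$. Then $f^n(u)\in(a,b_1)$, so $x$ does not survive for $(a,b_1)$, a contradiction.

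If $b\in C(f)$, the orbit of $b$ may hit $c$ before time $n$. I will handle this by using the one-sided continuity of the branches, taking the side on which $b$ lies. If that fails, I will simply absorb $C(f)$ into the exceptional set, since the inclusion already allows it.

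This is the geometric analogue of the first half of Proposition~\ref{prprp4.2}. I expect the main care to lie in the endpoint and completion conventions, in particular $b=c$ and orbits passing through $c$.

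\emph{Inclusion.} Take $b\in E_f(a)\setminus C(f)$, so that $b\in S_f(a,b)$ and the orbit of $b$ never meets $c$. Its itinerary $\bb=\tau_f(b+)$ then satisfies $\sigma^n(\bb)\notin(\ba,\bb)$ for all $n$. This holds because itineraries are strictly order-preserving for an EDLM (Lemma~\ref{lem:upward-jump}), so $f^n(b)\notin(a,b)$ translates into the symbolic condition. Thus $\bb\in\Omega(\ba,\bb)$.

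Let $b'':=\Phi_f(b)=\pi_{-2}(\bb)$. I need the negative-doubling upper sequence of $b''$ to equal $\bb$. Under $T_{-2}$ the only ambiguity occurs at precritical points, whose sequences contain the tails $\ell_0$ or $r_0$ after some shift. Such tails cannot occur in $\bb$ when $b\notin C(f)$, unless the hole degenerates. This follows from the injectivity argument of Lemma~\ref{lem:address-injectivity} together with $H_0\subseteq H_1$ from Proposition~\ref{proptwoholes}.

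With $\tau_{-2}(b''+)=\bb$ established, Proposition~\ref{prprp4.2}, applied with left sequence $\ba$ (whose address is $m$), gives $\bb\in E(\ba)$. Geometrically this means $b''\in E_{-2}(m)$, which can also be checked directly through the self-survival characterization for $T_{-2}$: $b''\in S_{-2}(m,b'')$. Hence $b\in\Phi_f^{-1}(E_{-2}(m))$.

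The hardest step is matching the boundary sequence $\tau_{-2}(m-)$ with $\ba$. I will handle it by using the symbolic self-survival statement directly, which only needs $\ba$ and $\bb$, rather than the geometric sequence of $m$.

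\emph{Measure conclusion.} $C(f)$ is countable, hence Lebesgue null, so (P) and countable subadditivity give $\operatorname{Leb}(E_f(a))=0$.
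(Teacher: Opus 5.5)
Your route for the inclusion matches the paper's. A non-precritical $b\in S_f(a,b)$ has $\bb\in\Omega(\ba,\bb)$ and its address orbit avoids $1/2$. Non-strict monotonicity of $\pi_{-2}$ then gives $\Phi_f(b)\in S_{-2}(m,\Phi_f(b))$, hence $\Phi_f(b)\in E_{-2}(m)$ by the easy direction for $T_{-2}$. This direct self-survival step is the right one; Proposition~\ref{prprp4.2} is not needed, and its $\ba$ would have to be a $T_{-2}$-sequence. You should state plainly why no shift of $\bb$ equals $\ell_0$ or $r_0$. Shifts of $\bb$ are realized itineraries and so avoid $(k_-,k_+)$. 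Since $k_-\preceq\ell_0\prec r_0\preceq k_+$, such a shift would have to equal $k_-$ or $k_+$, and strict order preservation would then put the orbit of $b$ at $c$. There is no ``degenerate hole'' exception.

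The genuine gap is in the converse half of the equality $E_f(a)=\{b\in[c,1):b\in S_f(a,b)\}$.
\begin{itemize}
\item Your continuity argument needs $f^j(b)\neq c$ for $j<n$. The claim $f^n(b)\neq c$ is neither true in general nor needed. For $a<c$, the condition $f^j(b)\neq c$ follows from minimality, because $c\in(a,b)$. For $a=c$, however, the left-completed orbit of a precritical $b$ can pass through $c$ before entering $(c,b)$, and then $f^n$ is discontinuous at $b$.
\item Absorbing $C(f)$ only yields $E_f(a)\setminus C(f)\subseteq\{b:b\in S_f(a,b)\}$. That suffices for the inclusion and for the conclusion under (P), but not for the stated equality.
\item The case $b=c$ with $c\notin S_f(a,c)$ is never treated.
\item In the forward direction you miss $a=b=c$, since $c\notin(c,b')$. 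There one compares $S_f(c,c)=[0,1]$ with $S_f(c,t)$, which omits $(c,t)$.
\end{itemize}

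The missing tool is the paper's finite-escape argument, which replaces continuity of $f^n$. For a fixed nonempty hole $J$ with the selected completion, $f$ is continuous on the closed set $I\setminus J$. So the finite-time survivor sets $K_r(J)$ are compact, and escaping into $J$ by time $r$ is an open condition.

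Let $r$ be the first time the completed orbit of $b>c$ enters $(a,b)$. Take $J=(a,d)$ with $\max\{c,f^r(b)\}<d<b$. This is a common sub-hole of every $(a,t)$ with $t$ near $b$. A neighbourhood $U$ of $b$ disjoint from $K_r(J)$ contains $(a,b)\triangle(a,t)$, so survivors of either hole avoid $U$, and hence $S_f(a,t)=S_f(a,b)$.

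No continuity of $f^r$ across $c$ is required. Nearby orbits that cross to the right of $c$ fall into $(c,d)\subseteq J$, and those on the left follow the completed left branch. For $b=c$, the same argument works with $J=(a,c)$, the right completion, and the one-sided neighbourhood $U=[c,c+\delta)$.
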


\begin{proof}
{
We first prove the characterization relative to the parameter interval
\([c,1)\). Suppose \(b\in S_f(a,b)\). If \(a<b\), then for every
\(t>b\) sufficiently close to \(b\), the point \(b\) belongs to the
hole \((a,t)\), and hence \(b\notin S_f(a,t)\). Thus \(b\in E_f(a)\).
In particular, when \(b=c>a\) and \(c\in S_f(a,c)\), the same argument
applies with the right-completed convention at \(c\). If \(a=b=c\),
Remark~\ref{re-edlm-empty-hole} gives
\(S_f(c,c)=[0,1]\), whereas \(S_f(c,t)\) omits the nonempty interval
\((c,t)\) for every \(t>c\); hence \(c\in E_f(c)\).

Conversely, suppose \(b\notin S_f(a,b)\). We first record a
finite-escape observation. For a nonempty hole \(J=(u,v)\) with
\(u\le c\le v\), put \(D_J=I\setminus J\), using the selected
one-sided completion if \(c\) is an endpoint of \(J\).
The resulting restriction \(f:D_J\to I\) is continuous: either
\(c\notin D_J\), or \(D_J\) approaches \(c\) only from the selected
side. Define the finite-time survivor sets by
\[
K_0(J)=D_J,\qquad
K_{n+1}(J)=\{x\in D_J:f(x)\in K_n(J)\}.
\]
These sets are compact by induction, and \(K_n(J)\) consists exactly
of the points avoiding \(J\) for times \(0,\ldots,n\).
Consequently, if a point escapes by time \(r\), a neighbourhood of
that point is disjoint from \(K_r(J)\). Nearby points therefore also
escape by time \(r\), although their first escape times may differ.

First let \(b>c\), and let \(r\ge1\) be the first time at which
the completed orbit of \(b\) enters \((a,b)\). Choose
\[
\max\{c,f^r(b)\}<d<b,\qquad J=(a,d).
\]
The orbit of \(b\) also enters \(J\) at time \(r\), so
\(b\notin K_r(J)\). Choose \(\delta>0\) such that
\[
U=(b-\delta,b+\delta)\subset(d,1)\setminus K_r(J),
\]
and set \(V=(b-\delta/2,b+\delta/2)\).
This argument also covers \(a=c\) when the orbit reaches \(c\)
before escaping: a nearby orbit crossing to the right of \(c\)
enters \(J\), while one remaining on the left follows the continuous
left branch. No continuity of the full iterate \(f^r\) across these
two possibilities is required.
For every \(t\in V\), we have \(J\subset(a,b)\cap(a,t)\) and
\[
(a,b)\triangle(a,t)
\subseteq[\min\{b,t\},\max\{b,t\}]\subset U.
\]
Every point of \(U\) escapes into the common hole \(J\) by time
\(r\). Thus every orbit surviving either \((a,b)\) or \((a,t)\)
avoids \(U\). The holes coincide outside \(U\), and their critical
endpoint conventions agree, so
\[
S_f(a,t)=S_f(a,b),\qquad t\in V.
\]
Thus \(b\notin E_f(a)\).

It remains to consider \(b=c\). The case \(c\in S_f(a,c)\) was
handled in the first part of the proof. If \(c\notin S_f(a,c)\),
then \(a<c\). Put \(J=(a,c)\) and let \(r\ge1\) be the first
escape time of the right-completed orbit of \(c\). Since
\(c\notin K_r(J)\), choose \(\delta>0\) with \(c+\delta<1\)
such that \(U=[c,c+\delta)\) is disjoint from \(K_r(J)\).
Every point of \(U\) escapes into \(J\) by time \(r\) under the
right completion. For \(c<t<c+\delta/2\), its orbit either follows
this escape itinerary or first reaches \(c\), which already belongs
to the enlarged hole \((a,t)\). Thus survivors of either hole avoid
\(U\). Since the holes coincide outside \(U\),
\(S_f(a,t)=S_f(a,c)\) for \(t\in[c,c+\delta/2)\).
Hence \(c\notin E_f(a)\).
Therefore
\[
E_f(a)=\{b\in[c,1):b\in S_f(a,b)\}.
\]}

Now let \(b\in E_f(a)\setminus C(f)\). By the characterization
just proved, \(b\in S_f(a,b)\), and its orbit avoids \(c\).
Its itinerary \(\bb=\tau_f(b)=\tau_f(b+)\) is therefore unique and,
by order preservation along the surviving orbit, belongs to
\(\Omega(\ba,\bb)\). Put
$
\epsilon:=\Phi_f(b)=\pi_{-2}\bigl(\tau_f(b+)\bigr).
$
Monotonicity of the address map implies that the addresses of all
shifts of \(\bb\) avoid \((m,\epsilon)\).
No such shift can equal \(\ell_0=1(12)^\infty\) or
\(r_0=2(21)^\infty\): the kneading bounds
\(k_-\preceq\ell_0\prec r_0\preceq k_+\) would force it to equal
\(k_-\) or \(k_+\), respectively. By strict order preservation of
itineraries at distinct points, this would make the orbit of \(b\)
reach \(c\), contrary to \(b\notin C(f)\).
Thus the address orbit avoids \(1/2\), and the address map intertwines
all its iterates with \(T_{-2}\). Consequently,
\(\epsilon\in S_{-2}(m,\epsilon)\).
Applying the self-survival characterization just proved to
\(T_{-2}\), with left boundary \(m\), we obtain
$
\epsilon\in E_{-2}(m).
$
Hence
\[
E_f(a)\subseteq C(f)\cup\Phi_f^{-1}\bigl(E_{-2}(m)\bigr).
\]
The exceptional set is needed because, at a precritical parameter,
the upper sequence used by \(\Phi_f\) need not be the sequence selected
by the surviving critical-endpoint completion. Each first-preimage
level of \(c\) is finite, so \(C(f)\) is countable.
Condition~(P) therefore implies \(\operatorname{Leb}(E_f(a))=0\).
\end{proof}

\begin{proof}[Proof of Corollary~\ref{coco3}]
The symbolic entropy assertion is~\eqref{eq:edlm-entropy-correspondence}.
Under~\eqref{eq:pullback-null}, Proposition~\ref{prop:edlm-null} gives
$\operatorname{Leb}(E_f(a))=0$. Let $B_f(a)$ denote the set where the
survivor entropy is not locally constant. Then $B_f(a)\subseteq E_f(a)$,
since local constancy of the survivor set
implies local constancy of its entropy. The entropy function is
nonincreasing by inclusion and continuous by Proposition~\ref{continuity},
and is locally constant outside the null set $B_f(a)$. This proves
the devil's staircase property, including its possible constant case.
\end{proof}

We next verify condition~\eqref{eq:pullback-null} for a concrete
piecewise-linear EDLM family.

\begin{example}\label{ex:edlm-linear-pullback}
Fix $1<\beta_1,\beta_2\leq2$ and set $c=1/2$. Define
\[
f(x)=f_{\beta_1,\beta_2}(x):=
\begin{cases}
1-\beta_1x,&0\le x<c,\\
\beta_2(1-x),&c<x\le1.
\end{cases}
\]
The slopes are $-\beta_1$ and $-\beta_2$, and
$f(0)=1$, $f(1)=0$. The critical values are
\[
f(c-)=1-\frac{\beta_1}{2},\qquad
f(c+)=\frac{\beta_2}{2},
\]
which lie in $[0,1]$ and give an upward jump of size
$(\beta_1+\beta_2-2)/2>0$.
Put $\lambda=\min\{\beta_1,\beta_2\}>1$.
If a nonempty open interval $J$ avoided all preimages of $c$, every
iterate would be affine on $J$, with
\[
1\geq |f^n(J)|\geq\lambda^n|J|
\]
for all $n$, a contradiction. Thus the precritical set is dense and
$f\in\mathrm{EDLM}$.

Fix $0<a<c$, and put
\[
\ba=\tau_f(a-),\qquad
m=\pi_{-2}(\ba),\qquad
\Phi(b)=\pi_{-2}\bigl(\tau_f(b+)\bigr),\quad c\le b<1.
\]
Here $m=m_{\beta_1,\beta_2}$ and $\Phi=\Phi_{\beta_1,\beta_2}$.
We verify condition~\eqref{eq:pullback-null} directly.

First, every fixed hole $H=(a,b)$ with $a<c<b<1$ has a
Lebesgue-null survivor set. To prove this, put
$S=S_f(a,b)$ and $\delta=\min\{c-a,b-c\}>0$.
If $\operatorname{Leb}(S)>0$, choose a density point
$x\in S\cap(0,1)$ and an arbitrarily small open interval
$J\subset(0,1)$ centred at $x$.
Let $n$ be the first time that $f^n(J)$ contains $c$;
such a time exists  by topological expansivity.
Since $f^j(J)$ avoids $c$ for $0\leq j<n$, the map
$f^n$ is affine and injective on $J$. Its slope is
$(-1)^n\beta_1^{n_1}\beta_2^{n_2}$, where $n_i$ counts the visits
to branch $i$ before time $n$, and $n_1+n_2=n$.
Its image $I=f^n(J)$ contains both $c$ and
$f^n(x)\notin(a,b)$. If $f^n(x)\leq a$, then
$(a,c)\subset I$; otherwise $f^n(x)\geq b$ and
$(c,b)\subset I$. Hence $|I\cap H|\geq\delta$.
Every point of $J$ whose $n$th image lies in $H$ escapes.
Because $f^n$ is affine on $J$ and $|I|\leq1$,
\[
\frac{|J\setminus S|}{|J|}
\geq\frac{|I\cap H|}{|I|}
\geq\delta.
\]
This contradicts the density-point property as $J$ shrinks to $x$,
so $\operatorname{Leb}(S)=0$.

The single parameter $b=c$ has Lebesgue measure zero. Now let
$b\in\Phi^{-1}(E_{-2}(m))\setminus\{c\}$, and write
$\epsilon=\Phi(b)$ and $\bb=\tau_f(b+)$.
The self-survival characterization in
Proposition~\ref{prop:edlm-null}, applied to $T_{-2}$, gives
$\epsilon\in S_{-2}(m,\epsilon)$.
Here we use only that characterization, which does not assume (P).
Since this survivor orbit avoids $1/2$, its negative-doubling
itinerary is unique and equals $\bb$, as $\pi_{-2}(\bb)=\epsilon$.
By Proposition~\ref{proptwoholes}, $\bb\in\Omega(\ba,\bb)$
and there is a point $x\in S_f(a,b)$ with itinerary $\bb$.
Strict order preservation of one-sided itineraries at distinct
points forces $x=b$.

Finally, set $d_N=c+(1-c)/(N+1)$ for $N\geq1$.
For each such $b>c$, some $N$ satisfies $d_N<b$.
Since $(a,d_N)\subset(a,b)$, the preceding argument gives
\begin{equation}\label{eq:beta-bifurcation-transfer}
\Phi^{-1}\bigl(E_{-2}(m)\bigr)
\subseteq\{c\}\cup\bigcup_{N\geq1}S_f(a,d_N).
\end{equation}
The singleton and all the survivor sets on the right are null by the
preceding arguments.
Consequently,
\[
\operatorname{Leb}\!\left(
\Phi_{\beta_1,\beta_2}^{-1}
\bigl(E_{-2}(m_{\beta_1,\beta_2})\bigr)\right)=0,
\]
which is condition~\eqref{eq:pullback-null}.
Thus Corollary~\ref{coco3} applies for every $1<\beta_1,\beta_2\leq2$
and every $0<a<1/2$.
\end{example}

The assertions with a fixed right endpoint follow by reflection. Put
$R(x)=1-x$ and, for $f\in\mathrm{EDLM}$, let
$\widehat f=R\circ f\circ R$. Then
$\widehat f\in\mathrm{EDLM}$ has critical point $1-c$, and
\[
R\bigl(S_f(a,b)\bigr)=S_{\widehat f}(1-b,1-a),
\]
where reflection exchanges the left and right completions. It preserves
entropy and Hausdorff dimension. The two branch formulas of $T_{-2}$
are invariant under this reflection, so Theorem~\ref{theorem2} gives the
dimension conclusion when $b$ is fixed and $a$ varies in $(0,1/2]$.
For a general EDLM, the analogous entropy conclusion follows by applying
Corollary~\ref{coco3} to $\widehat f$ with fixed left endpoint $1-b$;
the required pullback condition is condition~(P) for this reflected map.

\section*{Acknowledgement} Yiming Ding was supported by NSFC (No. 12271418); Yun Sun is supported
by the Fundamental Research Funds for the Central Universities (No. 3120626135) and NSFC (No. 12601157).

\section*{Conflict of Interest}
The authors declare that they have no known competing financial interests or personal relationships that could have appeared to influence the work reported in this paper.

\end{document}